\newtheorem{theorem}{Theorem}[section]
\newtheorem{lemma}[theorem]{Lemma}
\newtheorem{conjecture}[theorem]{Conjecture}
\theoremstyle{definition}
\newtheorem{definition}[theorem]{Definition}
\theoremstyle{remark}
\newtheorem{remark}[theorem]{Remark}
\numberwithin{equation}{section}
\newcommand\C{\mathbb{C}}
\newcommand\F{\mathbb{F}}
\newcommand\Z{\mathbb{Z}}
\newcommand\N{\mathbb{N}}
\newcommand\T{\mathbb{T}}
\newcommand\cC{\mathcal{C}}
\newcommand\cG{\mathcal{G}}
\newcommand\fA{\mathfrak{A}}
\newcommand\fD{\mathfrak{D}}
\newcommand\fS{\mathfrak{S}}
\newcommand\bm{{\mathbf{m}}}
\newcommand\Aut{\operatorname{Aut}}
\newcommand\Out{\operatorname{Out}}
\newcommand\End{\operatorname{End}}
\newcommand\id{\mathrm{id}}
\newcommand\Ad{\mathrm{Ad}}
\newcommand\Irr{\operatorname{Irr}}
\newcommand\Ind{\operatorname{Ind}}
\newcommand\Rep{\operatorname{Rep}}
\newcommand\biota{\bar{\iota}}
\newcommand\bkappa{\bar{\kappa}}
\newcommand\bdelta{\bar{\delta}}
\newcommand\bepsilon{\bar{\epsilon}}
\newcommand\heta{\hat{\eta}}
\title{Group-subgroup subfactors revisited}
\author{Masaki Izumi
\thanks{Supported in part by JSPS KAKENHI Grant Number JP20H01805}\\
Graduate School of Science \\
Kyoto University \\
Sakyo-ku, Kyoto 606-8502, Japan \\
izumi@math.kyoto-u.ac.jp}
\date{}
\begin{document} 
\maketitle
\begin{center}\textit{In memory of Vaughan Jones}
\end{center}

\begin{abstract} For all Frobenius groups and a large class of finite multiply transitive permutation groups, 
we show that the corresponding group-subgroup subfactors are completely characterized by their principal graphs. 
The class includes all the sharply $k$-transitive permutation groups for $k=2,3,4$, and in particular the Mathieu group $M_{11}$ 
of degree 11.   
\end{abstract}

%%%%%%%%%%%%%%%%%%%%%%%%%%%%%%%%%%%%%%%%%%%%%%%%%%%%%%%%%%%%%%%%%%%%%%%%%%%%%%%%%%%%%%%%%%%%%%%%%%%%%%%%%%%%%%%%%%%%%%%%%%%%%%%%%%
\section{Introduction}
The  classical Goldman's theorem \cite{G59} says, in modern term, that every index 2 inclusion $M\supset N$ of type II$_1$ factors 
is given by the crossed product $M=N\rtimes \Z_2$, where $\Z_2$ is the cyclic group of order 2. 
It is a famous story that this fact is one of the motivating examples when Vaughan Jones introduced his cerebrated 
notion of index for subfactors \cite{J83}. 
In the case of index 3, there are two different cases: their principal graphs are either the Coxeter graph $D_4$ or $A_5$ 
(see \cite{EK}, \cite{GHJ} for example). 
In the $D_4$ case, the subfactor is given by the crossed product $M=N\rtimes \Z_3$. 
In the $A_5$ case, we showed in \cite{I92} that there exists a unique subfactor $R\subset N$, up to inner conjugacy, such that 
$$M=R\rtimes \fS_3\supset N=R\rtimes \fS_2$$
holds where $\fS_n$ denotes the symmetric group of degree $n$.  
We call such a result \textit{Goldman-type theorem}, uniquely recovering the subfactor $R$ and a group action on it 
solely from one of the principal graphs of $M\supset N$. 
More Goldman-type theorems were obtained in \cite{HS96},\cite{H95}, and \cite{I97}, but here we should emphasize 
that only Frobenius groups had been treated until we recently showed a Goldman-type theorem for the alternating groups 
$\fA_5>\fA_4$ (\cite[Theorem A1]{IMPP15}). 

Let $G$ be a finite group, let $H$ be a subgroup of it, and let $\alpha$ be an outer action of $G$ on a factor $R$. 
Then the inclusion 
$$M=R\rtimes_\alpha G\supset N=R\rtimes_\alpha H$$
is called a \textit{group-subgroup subfactor}. 
Let $L$ be the kernel of the permutation representation of $G$ acting on $G/H$, which is the largest normal 
subgroup of $G$ contained in $H$. 
Then the inclusion $M\supset N$ remembers at most the information of $G/L>H/L$, and so whenever we discuss 
group-subrgroup subfactors, we always assume that $L$ is trivial, or more naturally, we treat $G$ as a transitive permutation group acting on a finite set and $H$ as a point stabilizer. 
A Frobenius group $G$ is a semi-direct product $K\rtimes H$ with a free $H$ action on $K\setminus \{e\}$. 
In this paper, we show Goldman-type theorems for all Frobenius groups and for a large class of multiply transitive permutation groups. 

One might suspect that every question about group-subgroup subfactors should be reduced to an easy exercise in either permutation group theory 
or representation theory, which turned out to be not always the case. 
Indeed, Kodiyalam-Sunder \cite{KS00} showed that two pairs of groups $\fS_4>\Z_4$ and $\fS_4>\Z_2\times \Z_2$ give isomorphic 
group-subgroup subfactors, which cannot be understood either in permutation group theory or representation theory. 
In \cite{I01}, we gave a complete characterization of two isomorphic group-subgroup subfactors coming from two different permutation groups 
in terms of fusion categories and group cohomology. 
To understand this kind of phenomenon, the representation category of a group should be treated as an abstract fusion category, and ordinary 
representation theory is not strong enough.

When I discussed the above result \cite{I01} with Vaughan more than 10 years ago, he asked me whether the Kodiyalam-Sunder-type phenomena occur for primitive permutation groups, or in other words, when $H$ is a maximal subgroup in $G$. 
Theorem 2.3 of \cite{I01} shows that the answer is `no', and when I told it to him, somehow he looked content. 
I guess Vaughan believed that one should assume primitivity of the permutation group $G$ to obtain reasonable results in group-subgroup subfactors. 
Probably he was right because the primitivity of $G$ is equivalent to the condition that the corresponding group-subgroup 
subfactor has no non-trivial intermediate subfactor, and such a subfactor is known to be very rigid.  
This assumption also rules out the following puzzling example: while the principal graph of the group-subgroup subfactor for 
$\fD_8=\Z_4\rtimes_{-1}\Z_2>\Z_2$ is the Coxeter graph $D_6^{(1)}$, 
there are 3 other subfactors sharing the same principal graph but they are not group-subgroup subfactors (\cite[Theorem 3.4]{IK92}). 
This means that a Goldman-type theorem never holds for $\fD_8>\Z_2$. 
Note that $\Z_2$ is not a maximal subgroup of $\fD_8$, and hence the $\fD_8$-action on $\fD_8/\Z_2$ is not primitive. 

Typical examples of primitive permutation groups are multiply transitive permutation groups, and we mainly work on 
Goldman-type theorems for them in this paper. 
We briefly recall the basic definitions related to them here.  
Let $G$ be a permutation group on a finite set $X$. 
For $k\in \N$, we denote by $X^{[k]}$ the set of all ordered tuples $(a_1,a_2,\ldots,a_k)$ consisting of distinct elements in $X$. 
The group $G$ acts on $X^{[k]}$ by $g\cdot(a_1,a_2,\ldots,a_k)=(ga_1,ga_2,\ldots,ga_k)$, and we always consider this action. 
For $x\in X$, we denote by $G_x$ the stabilizer of $x$ in $G$, and for $(x_1,x_2,\ldots,x_k)\in X^{[k]}$ we denote 
$$G_{x_1,x_2,\ldots,x_k}=\bigcap_{i=1}^kG_{x_i}. $$
We say that $G$ is $k$-transitive if the $G$-action on $X^{[k]}$ is transitive. 
This is equivalent to the condition that the $G_{x_1,x_2,\ldots,x_{k-1}}$-action on $X\setminus\{x_1,x_2,\ldots,x_{k-1}\}$ is transitive. 
We say that $G$ is regular if $G$ is free and transitive.  
A Goldman-type theorem for a regular permutation group is nothing but the characterization of crossed products 
(see \cite{PP86},\cite{K89}). 

As will be explained in Subsection 2.5 in detail, our strategy for proving a Goldman-type theorem for $G>G_{x_1}$ is an induction 
argument reducing it to that of $G_{x_1}>G_{x_1,x_2}$.  
Assume that $G$ is $k$-transitive but not $k+1$-transitive. 
Then the first step of the induction is a Goldman-type theorem for $G_{x_1,x_2,\ldots,x_{k-1}}>G_{x_1,x_2,\ldots,x_k}$, 
and we need a good assumption on the $G_{x_1,x_2,\ldots,x_{k-1}}$-action on $X\setminus\{x_1,x_2,\ldots,x_{k-1}\}$ to assure it.  
Therefore we will treat the following two cases in this paper: 
\begin{itemize}
\item[(i)] $G_{x_1,x_2,\ldots,x_{k-1}}$ is regular, 
\item[(ii)] $G_{x_1,x_2,\ldots,x_{k-1}}$ is a primitive Frobenius group. 
\end{itemize}
Permutation groups satisfying (i) are called sharply $k$-transitive, and their complete classification is known. 
Other than symmetric groups and alternating groups, the following list exhausts all of them (see \cite[Chapter XII]{HB}). 
\begin{itemize}
\item[(1)] We denote by $\F_q$ the finite field with $q$ elements. 
Every sharply 2-transitive group is either a group of transformations of the form $x\mapsto ax^\sigma+b$ of $\F_q$, 
where $a\in \F_q^\times$, $b\in \F_q$, and $\theta\in \Aut(\F_q)$, or one of the 7 exceptions. 
They are all Frobenius groups.  
\item[(2)] There exist exactly 2 infinite families of sharply 3-transitive permutation groups: 
$L(q)=PGL_2(q)$ acting on the projective geometry $PG_1(q)=(\F_q^2\setminus\{0\})/\F_q^\times$ over the finite field $\F_q$, 
and its variant $M(q)$ acting on $PG_1(q)$ with an involution of $\F_q$ when $q$ is an even power of an odd prime. 
When $q$ is odd, both of them contain $PSL_2(q)$ as an index 2 subgroup. 
\item[(3)] The Mathieu group $M_{11}$ of degree 11 is a sharply 4-transitive group, and the Mathieu group $M_{12}$ of degree 12 
is a sharply 5-transitive permutation group. 
\end{itemize}

\begin{conjecture} A Goldman-type theorem holds for every sharply $k$-transitive permutation group. 
\end{conjecture}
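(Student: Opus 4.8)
The plan is to prove the conjecture by induction up the chain of point stabilizers, using the reduction of Subsection 2.5 as the engine. Fix a flag $(x_1,\ldots,x_k)\in X^{[k]}$ and write $G^{(0)}=G$ and $G^{(j)}=G_{x_1,\ldots,x_j}$, so that sharp $k$-transitivity forces $G^{(k)}=\{e\}$ and yields the telescoping chain $G=G^{(0)}>G^{(1)}>\cdots>G^{(k)}=\{e\}$. The target is a Goldman-type theorem for $G>G^{(1)}$. Since Subsection 2.5 reduces the theorem for $G^{(j-1)}>G^{(j)}$ to the theorem for $G^{(j)}>G^{(j+1)}$, it suffices to (a) settle the bottom inclusion of the chain and (b) run this reduction $k-1$ times on the way up.

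For the base case I would use that $G^{(k-1)}$ acts on $X\setminus\{x_1,\ldots,x_{k-1}\}$ sharply $1$-transitively, that is, regularly; this is precisely situation (i). Hence $G^{(k-1)}>G^{(k)}=\{e\}$ is a group sitting over its trivial subgroup, and the associated group-subgroup subfactor is the plain crossed product $R\rtimes G^{(k-1)}\supset R$. A Goldman-type theorem here is exactly the characterization of crossed products, so I would invoke \cite{PP86} and \cite{K89} to recover both $R$ and the $G^{(k-1)}$-action --- uniquely up to the expected equivalence --- from the principal graph. This disposes of the first step of the induction for every sharply $k$-transitive group uniformly.

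For the inductive step, suppose the theorem is known for $G^{(j)}>G^{(j+1)}$ and feed the reconstructed data into the reduction of Subsection 2.5 to obtain the theorem for $G^{(j-1)}>G^{(j)}$, where $1\le j\le k-1$. The key structural input is that $G^{(j-1)}$ is $(k-j+1)$-transitive on $X\setminus\{x_1,\ldots,x_{j-1}\}$, hence primitive throughout this range; as emphasized in the introduction, primitivity is exactly what excludes the Kodiyalam--Sunder-type ambiguity (the $\fS_4>\Z_4$ versus $\fS_4>\Z_2\times\Z_2$ coincidence) and rigidifies the intermediate-subfactor structure, so that the principal graph of the larger inclusion determines that of the smaller one and the reconstruction can be propagated upward. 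Iterating from the base up to $G>G^{(1)}$ then produces the desired theorem.

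The hard part is making the inductive step uniform in $k$. The reduction of Subsection 2.5 is not automatic: it demands verifying, level by level, a rigidity hypothesis on the bimodule (fusion) category generated by the inclusion, and the principal graphs involved grow quickly with the degree. For $k\le 4$ --- in particular for the sharply $4$-transitive group $M_{11}$ of degree $11$ --- these verifications remain finite and tractable, which is why the theorem is reachable there. For $k=5$, i.e. for $M_{12}$ of degree $12$, the final step $M_{12}>M_{11}$ forces the analysis of a substantially larger graph for which the hypothesis is no longer easy to confirm by the uniform argument; carrying out or bypassing that verification is the central obstacle and is what keeps the statement a conjecture beyond $k=4$.
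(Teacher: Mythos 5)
Your proposal is not a proof of the statement, and indeed it cannot be compared with ``the paper's own proof'' because the paper offers none: the statement is a \emph{conjecture}, verified in the paper only for $k=2,3,4$, and your argument is essentially a restatement of the paper's program from Subsection 2.5 together with an explicit admission, in your last paragraph, that the inductive step cannot be carried out at $k=5$. The decisive gap is that you treat the reduction of Subsection 2.5 as a black-box lemma (``Subsection 2.5 reduces the theorem for $G^{(j-1)}>G^{(j)}$ to the theorem for $G^{(j)}>G^{(j+1)}$''), when in fact it is only a template whose hypotheses (1)--(6) must be verified by hand at every level for the specific fusion data. Your assertion that primitivity ``rigidifies the intermediate-subfactor structure, so that the principal graph of the larger inclusion determines that of the smaller one'' is precisely the content that has to be proved each time and for which no general argument is known: step (3) for $M_{11}$ costs the entire Theorem \ref{dualM10-M9} (reconstructing $\cG_{M_{10}>M_9}$ from the dual graph $\cG_{M_9}^{M_{10}}$ by lengthy fusion-rule analysis), and for $\fS_5$ and $\fA_6$ it invokes the classification of index-$5$ subfactors of \cite{IMPP15}. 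Likewise step (6) genuinely fails the generic criterion: Lemma \ref{inv2} is inapplicable to $\fA_4=\Z_2^2\rtimes\Z_3$ and to $M_9=\Z_3^2\rtimes Q_8$, which is why Theorem \ref{GTA6} needs the ad hoc detour through the automorphism $\varphi$, the sector $\omega=\sigma\varphi\bepsilon$, and Lemma \ref{inv3}. None of this is subsumed by ``primitivity excludes Kodiyalam--Sunder ambiguity.''

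Two further inaccuracies. First, your base case is weaker than you claim: the characterization of crossed products (\cite{PP86},\cite{K89}) recovers $R$ and an action from the depth-2 graph $\cG_n$, but that graph determines only the \emph{order} of $G^{(k-1)}$, not its isomorphism type (the paper's $\fD_8$ versus $Q_8$ remark), so what your induction could propagate is at best a weak Goldman-type theorem; the strong statements in the paper are obtained only by identifying the extension group $\Gamma_0$ of Lemma \ref{finishing} via classification theorems for multiply transitive and Zassenhaus groups (\cite[Chapter XI, XII]{HB}), not via primitivity. In fact the very first nontrivial rung is not a crossed-product characterization at all but the Frobenius-group Theorem \ref{GthF}, proved directly through Lemmas \ref{factorization} and \ref{1dim}. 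Second, the unresolved territory is larger than the single case $M_{12}$ you name: for $k\geq 5$ the conjecture also covers $\fA_7$ and all higher symmetric and alternating groups, none of which your argument reaches. As written, the proposal establishes nothing beyond the cases the paper already proves, and the conjecture remains open.
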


In Section 3, we show Goldman-type theorems for all Frobenius groups, and verify the conjecture for $k=2$ 
as a special case (Theorem \ref{GthF}).  
We also classify related fusion categories generalizing  Etingof-Gelaki-Ostrik's result \cite[Corollary 7.4]{EGO04} 
(Theorem \ref{classification}). 
We verify the conjecture for $k=3$ in Section 4 (Theorem \ref{LM}), and for $k=4$ in Section 6 (Theorem \ref{GTS5},\ref{GTA6}, \ref{GthM}). 
When $q$ is odd, the action of $PSL_2(q)$ on $PG_1(q)$ is 2-transitive and it satisfies the condition (ii) above. 
We will show a Goldman-type theorem for $PSL_2(q)$ acting on $PG_1(q)$ in Section 5 (Theorem 5.1). 

2-transitive extensions of Frobenius groups (with some condition) are called Zassenhaus groups (see \cite[Chapter XI]{HB} 
for the precise definition), and there are exactly 4 infinite families of them: $L(q)$, $M(q)$, $PSL_2(q)$ as above, and the Suzuki groups $Sz(2^{2n+1})$ 
of degree $2^{4n+2}$ for $n\geq 1$. 
One might hope that a Goldman-type theorem would hold for the Suzuki groups too. 
However, it is difficult to prove it with our technique now because the point stabilizers of the Suzuki groups are non-primitive Frobenius groups and the Frobenius kernels are non-commutative. 

%%%%%%%%%%%%%%%%%%%%%%%%%%%%%%%%%%%%%%%%%%%%%%%%%%%%%%%%%%%%%%%%%%%%%%%%%%%%%%%%%%%%%%%%%%%%%%%%%%%%%%%%%%%%%%%%%%%%%%%%%%%%%%%%%%
%%%%%%%%%%%%%%%%%%%%%%%%%%%%%%%%%%%%%%%%%%%%%%%%%%%%%%%%%%%%%%%%%%%%%%%%%%%%%%%%%%%%%%%%%%%%%%%%%%%%%%%%%%%%%%%%%%%%%%%%%%%%%%%%%%
%%%%%%%%%%%%%%%%%%%%%%%%%%%%%%%%%%%%%%%%%%%%%%%%%%%%%%%%%%%%%%%%%%%%%%%%%%%%%%%%%%%%%%%%%%%%%%%%%%%%%%%%%%%%%%%%%%%%%%%%%%%%%%%%%%
\section{Preliminaries}
%%%%%%%%%%%%%%%%%%%%%%%%%%%%%%%%%%%%%%%%%%%%%%%%%%%%%%%%%%%%%%%%%%%%%%%%%%%%%%%%%%%%%%%%%%%%%%%%%%%%%%%%%%%%%%%%%%%%%%%%%%%%%%%%%%
\subsection{Frobenius groups}
A transitive permutation group $G$ on a finite set $X$ is said to be a Frobenius group if it is not regular and every 
$g\in G\setminus \{e\}$ has at most one fixed point. 
Let $H=G_{x_1}$ be a point stabilizer.  
Then $G$ being Frobenius is equivalent to the condition that the $H$-action on $X\setminus \{x_1\}$ is free, and 
is further equivalent to the condition that $H\cap gHg^{-1}=\{e\}$ for all $g\in G\setminus H$. 

For a Frobenius group $G$, 
$$K=G\setminus \bigcup_{x\in X}G_x$$
is a normal subgroup of $G$, called the Frobenius kernel, and $G$ is a semi-direct product $K\rtimes H$ (see \cite[8.5.5]{R93}). 
The point stabilizer $H$ is called a Frobenius complement. 
Now the set $X$ is identified with $K$, and the $H$-action on $X\setminus \{x_1\}$ is identified with that on $K\setminus \{e\}$. 
It is known that $K$ is nilpotent (Thompson), and $H$ has periodic cohomology (Burnside) in the sense that  
the Sylow $p$-subgroups of $H$ are cyclic for odd $p$, and are either cyclic or generalized quaternion for $p=2$ (\cite[10.5.6]{R93}).  
We collect the following properties of Frobenius groups we will use later. 

Recall that a transitive permutation group is primitive if and only if its point stabilizer is maximal in $G$. 

\begin{lemma} \label{Fro} Let $G$ be a Frobenius group with the kernel $K$ and a complement $H$. 
Then the following hold:
\begin{itemize}
\item[$(1)$] $G$ is primitive if and only if $K$ is an elementary abelian $p$-group $\Z_p^l$ with a prime $p$ and there is no 
non-trivial $H$-invariant subgroup of $K$. 
\item[$(2)$] The Schur multiplier $H^2(H,\T)$ is trivial. 
\item[$(3)$] Every abelian subgroup of $H$ is cyclic. 
\end{itemize} 
\end{lemma}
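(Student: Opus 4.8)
The plan is to treat the three parts separately, drawing on the facts recalled just before the statement: $K$ is nilpotent (Thompson) and $H$ has periodic cohomology (Burnside).

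For $(1)$, I would first set up the standard correspondence between intermediate subgroups $H\leq M\leq G$ and $H$-invariant subgroups of $K$: given such an $M$, the subgroup $M\cap K$ is normalized by $H$ (since $H\subseteq M$ and $H$ normalizes $K$), and conversely $M=(M\cap K)\rtimes H$ is recovered from $M\cap K$. As primitivity of $G$ is equivalent to maximality of the point stabilizer $H$, it is equivalent to the absence of a non-trivial proper $H$-invariant subgroup of $K$. It then remains to show that this condition forces $K$ to be elementary abelian, and here I would use that every characteristic subgroup of $K$ is $H$-invariant, because $H$ acts on $K$ by automorphisms. Since $K$ is nilpotent, its center $Z(K)$ is a non-trivial characteristic subgroup, so it must equal $K$, making $K$ abelian. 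Decomposing the abelian group $K$ into its primary components (each characteristic, hence $H$-invariant) forces $K$ to be a $p$-group for a single prime $p$; then the characteristic subgroup $\{x\in K:x^p=e\}$ is non-trivial and $H$-invariant, so it equals $K$, giving $K\cong\Z_p^l$. The converse is immediate from the correspondence.

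For $(2)$, I would argue one prime at a time. For each prime $p$, with $P$ a Sylow $p$-subgroup of $H$, the restriction map $H^2(H,\T)\to H^2(P,\T)$ is injective on the $p$-primary component, since $\operatorname{cor}\circ\operatorname{res}$ equals multiplication by the index $[H:P]$, which is prime to $p$. Now $H^2(P,\T)$ is the Schur multiplier of $P$, and by Burnside's theorem $P$ is cyclic for odd $p$ and cyclic or generalized quaternion for $p=2$; since both cyclic groups and generalized quaternion groups have trivial Schur multiplier, every primary component of $H^2(H,\T)$ vanishes, whence $H^2(H,\T)=0$.

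Part $(3)$ is the most elementary: given an abelian subgroup $A\leq H$, I would decompose $A$ into its Sylow subgroups $A_p$, each of which embeds into a Sylow $p$-subgroup of $H$. For odd $p$ this Sylow subgroup is cyclic, so $A_p$ is cyclic; for $p=2$ it is cyclic or generalized quaternion, and in either case its abelian subgroups are cyclic, since a generalized quaternion group has a unique involution, which forces all of its abelian subgroups to be cyclic. Being a direct product of cyclic groups of coprime orders, $A$ is therefore cyclic. The main obstacle across the lemma is the input about generalized quaternion groups, namely the vanishing of their Schur multiplier for $(2)$ and the cyclicity of their abelian subgroups for $(3)$; everything else reduces to the subgroup correspondence and routine characteristic-subgroup and Sylow-local arguments, with the deeper structural facts (nilpotency of $K$ and periodicity of the cohomology of $H$) quoted from Thompson and Burnside.
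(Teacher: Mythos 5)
Your proof is correct and takes essentially the same route as the paper: the intermediate-subgroup correspondence plus characteristic subgroups ($Z(K)$, the primary component, the $p$-torsion subgroup) for $(1)$, the restriction--corestriction reduction to Sylow subgroups for $(2)$ (which is precisely the content of the theorem from Brown that the paper cites), and the Sylow-local reduction to cyclic and generalized quaternion groups for $(3)$. The only difference is that you spell out the details the paper delegates to the references.
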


\begin{proof} (1) Note that $G$ is primitive if and only if there is no non-trivial $H$-invariant subgroup of $K$. 
Assume that $G$ is primitive. 
Since $K$ is nilpotent, its center $Z(K)$ is not equal to $\{e\}$ and $H$-invariant, and so $K=Z(K)$. 
Let $p$ be a prime so that the $p$-component $K_p$ of $K$ is not $\{e\}$. 
Since $K_p$ is $H$-invariant, we get $K=K_p$. 
The same argument applied to $L=\{x\in K;\; x^p=0\}$ shows that $K$ is an elementary abelian $p$-group. 

(2) Since the Schur multiplier is trivial for every cyclic group and generalized quaternion 
(see for example \cite[Proposition 2.1.1, Example 2.4.8]{K}), the statement follows from  \cite[Theorem 10.3]{B}. 

(3) The statement follows from the fact that every abelian subgroup of a generalized quaternion group is cyclic. 
\end{proof}

%%%%%%%%%%%%%%%%%%%%%%%%%%%%%%%%%%%%%%%%%%%%%%%%%%%%%%%%%%%%%%%%%%%%%%%%%%%%%%%%%%%%%%%%%%%%%%%%%%%%%%%%%%%%%%%%%%%%%%%%%%%%%%%%%%
\subsection{Sharply $k$-transitive permutation groups}
A transitive permutation group $G$ on a finite set $X$ is said to be sharply $k$-transitive permutation group if the $G$-action on 
$X^{[k]}$ is regular. 
If the degree of $G$ is $n$, a sharply $k$-permutation group has order $n(n-1)\cdots (n-k+1)$. 

For $n\in \N$, let $X_n=\{1,2,\ldots,n\}$. 
Since $X_n^{[n-1]}$ and $X_n^{[n]}$ are naturally identified, the defining action of $\fS_n$ on $X_n$ is 
both sharply $n-1$ and $n$-transitive. 
As this fact might cause confusion, we treat $\fS_n$ as a sharply $n-1$-transitive group in this paper. 
The natural action of $\fA_n$ on $X_n$ is sharply $n-2$-transitive.

Every sharply 2-transitive permutation group $G$ is known to be a Frobenius group, and hence of the form $G=\Z_p^k\rtimes H$ with a prime $p$ 
and with a Frobenius complement $H$ acting on $\Z_p^k\setminus\{0\}$ regularly.  
Let $q=p^k$, and let $T(q)=\F_q^\times \rtimes \Aut(\F_q)$, which acts on $\F_q$ as an additive group isomorphic to $(\Z/p\Z)^k$. 
Then the Zassenhaus theorem says that $H$ is either identified with a subgroup of $T(q)$ or one of the following exceptions: 
$SL_2(3)$ acting on $\Z_5^2$, $GL_2(3)$ acting on $\Z_7^2$, $SL_2(3)\times \Z_5$ acting on $\Z_{11}^2$, 
$SL_2(5)$ acting on $\Z_{11}^2$, $GL_2(3)\times \Z_{11}$ acting on $\Z_{23}^2$, 
$SL_2(5)\times \Z_7$ acting on $\Z_{29}^2$, and $SL_2(5)\times \Z_{29}$ acting on $\Z_{59}^2$. 
The reader is referred to \cite[Chapter XII, Section p]{HB} for this fact.  

There are two important families $H(q)$ and $S(q)$ of sharply 2-transitive permutation groups. 
If $G=\Z_p^k\rtimes H$ is a sharply 2-transitive group with an abelian Frobenius complement, it is necessarily of the form  
$G=\F_q\rtimes \F_q^\times$, which is denoted by $H(q)$. 
Assume now that $p$ is an odd prime and $q=p^{2l}$. 
Then the field $\F_q$ has an involution $x^\sigma=x^{p^l}$. 
The group $S(q)$ has a Frobenius complement $\F_q^\times$ as a set, but its action on $\F_q$ is given as follows:
$$a\cdot x=\left\{
\begin{array}{ll}
ax , &\quad \textrm{if } a\textrm{ is a square in }\F_q^\times , \\
ax^\sigma , &\quad\textrm{if } a\textrm{ is not a square in }\F_q^\times.
\end{array}
\right.
$$
For example, the group $S(3^2)$ is isomorphic $\Z_3^2\rtimes Q_8$.  
We have small order coincidences $\fS_3=H(3)$ and $\fA_4=H(2^2)$. 

There are exactly two families of sharply 3-transitive permutation groups $L(q)$ and $M(q)$, and they are transitive extensions of 
$H(q)$ and $S(q)$ respectively (see \cite[Chapter XI, Section 2]{HB}). 
To describe their actions, it is convenient to identify the projective geometry $PG_1(q)$ with $\F_q\sqcup \{\infty\}$. 
The 3-transitive action of $L(q)=PGL_2(q)$ is given as follows: 
$$\left[
\left(
\begin{array}{cc}
a &b  \\
c &d 
\end{array}
\right)
\right]\cdot x=\frac{ax+b}{cx+d}.$$
The group $M(q)$ is $PGL_2(q)$ as a set, but its action on $PG_1(q)$ is given by 
$$\left[
\left(
\begin{array}{cc}
a &b  \\
c &d 
\end{array}
\right)
\right]\cdot x=
\left\{
\begin{array}{ll}
\frac{ax+b}{cx+d} , &\quad\textrm{if }  ad-bc\textrm{ is a square in }\F_q^\times, \\
\frac{ax^\sigma+b}{cx^\sigma+d} , &\quad\quad\textrm{if } ad-bc\textrm{ is not a square in }\F_q^\times.
\end{array}
\right.
$$
We have small order coincidences $\fS_4=L(3)$ and $\fA_5=L(2^2)$. 

When $q$ is odd, the restriction of the $L(q)$-action on $PG_1(q)$ to $PSL_2(q)$ is two transitive, and its point stabilizer 
is isomorphic to $\Z_p^k\rtimes \Z_{(p^k-1)/2}$. 

Other than symmetric groups and alternating groups, the Mathieu groups $M_{11}$ and $M_{12}$ are the only sharply 
4 and 5-transitive permutation groups, and their degrees are 11 and 12 respectively (see \cite[Chapter XII, Section 3]{HB}). 
To show a Goldman-type theorem for the permutation group $M_{11}$ of degree 11, we do not really need its construction. 
Instead, we only need the fact that this action is a transitive extension of the sharply 3-transitive permutation group $M(3^2)$ 
on $PG_1(3^2)$ (see \cite[Chapter XII, Theorem 1.3]{HB}).

%%%%%%%%%%%%%%%%%%%%%%%%%%%%%%%%%%%%%%%%%%%%%%%%%%%%%%%%%%%%%%%%%%%%%%%%%%%%%%%%%%%%%%%%%%%%%%%%%%%%%%%%%%%%%%%%%%%%%%%%%%%%%%%%%%
\subsection{Group-subgroup subfactors}
For a finite index inclusion $M\supset N$ of factors, we need to distinguish the two principal graphs of it, and symbols for them. 
Thus we mean by the principal graph of $M\supset N$ the induction-reduction graph between $N$-$N$ bimodules and $M$-$N$ bimodules 
arising from the inclusion, and denote it by $\cG_{M\supset N}$,  
while we mean by the dual principal graph the induction-reduction graph between $M$-$M$ bimodules and $M$-$N$ bimodules, 
and denote it by $\cG_{M\supset N}^d$. 
 
Let $G$ be a transitive permutation group on a finite set $X$, and let $H=G_{x_1}$ with $x_1\in X$. 
Let 
$$M=R\rtimes_\alpha G\supset N=R\rtimes_\alpha H,$$
be a group subgroup subfactor with an outer $G$-action on a factor $R$. 
The reader is referred to \cite{KY92} for the tensor category structure of the $M$-$M$, $M$-$N$, $N$-$M$, and $N$-$N$ bimodules 
arising from the group-subgroup subfactor $M\supset N$. 
The category of $M$-$M$ bimodules is equivalent to the representation category $\Rep(G)$ of $G$, and we use the symbol 
$\widehat{G}$ to parametrize the equivalence classes of irreducible $M$-$M$ bimodules. 
The set of equivalence classes of irreducible $M$-$N$ bimodules are parametrized by $\widehat{H}$, 
and $\cG_{G>H}^d$ is the induction-reduction graph between $\widehat{G}$ and $\widehat{H}$. 
For this reason, we denote by $\cG_H^G$ the dual principal graph $\cG_{M\supset N}^d$. 

The description of the category of $N$-$N$ bimodules is much more involved.  
We choose one point from each $G_{x_1}$-orbit in $X\setminus \{x_1\}$, and enumerate them as $x_2,x_3,\ldots,x_k$. 
Then the set of the equivalence classes of irreducible $N$-$N$ bimodules arising from $M\supset N$ 
is parametrized by the disjoint union 
$$\widehat{G_{x_1}}\sqcup \widehat{G_{x_1,x_2}}\sqcup \cdots\sqcup \widehat{G_{x_1,x_k}},$$
and the graph $\cG_{M\supset N}$ is the union of the induction-reduction graph between $\widehat{G_{x_1}}$ and 
$\widehat{G_{x_1,x_i}}$ over $1\leq i\leq k$ 
with convention $G_{x_1,x_1}=G_{x_1}$. 
The dimension of the irreducible object corresponding to $\pi\in \widehat{G_{x_1,x_2}}$ is $|G_{x_1}/G_{x_1,x_2}|\dim \pi$. 
We denote by $\cG_{(G,X)}$ or $\cG_{G>G_{x_1}}$ the principal graph $\cG_{M\supset N}$ depending on the situation.  

The category of $N$-$N$ bimodules for the inclusion $N\supset R$ is equivalent to $\Rep(H)$, and we denote the equivalence classes 
of irreducible objects of it by $\{[\beta_{\pi}]\}_{\pi\in \widehat{H}}$. 
Then the set $\{[\beta_{\pi}]\}_{\pi\in \widehat{H}}$ actually coincides with $\widehat{H}$ in $\cG_{G>H}$ 
as equivalence classes of $N$-$N$ bimodules (this fact is not usually emphasized but one can see it from \cite{KY92}). 
Let $\iota={}_MM_N$ be the basic bimodule. 
Then the set of equivalence classes of irreducible $M$-$N$ bimodules arising from $M\supset N$ is given by 
$\{[\iota\otimes_N\beta_\pi] \}_{\pi \in \widehat{H}}$. 

If $G$ is 2-transitive, we have $k=2$, and the graph $\cG_{(G,X)}$ can be obtained from $\cG^{G_{x_1}}_{G_{x_1,x_2}}$ by 
putting an edge of length one to each even vertex of $\cG^{G_{x_1}}_{G_{x_1,x_2}}$. 
More generally, for a bipartite graph $\cG$, we denote by $\widetilde{\cG}$ the graph obtained by putting 
an edge of length one to each even vertex of $\cG$. 
Then we have $\cG_{(G,X)}=\widetilde{\cG^{G_{x_1}}_{G_{x_1,x_2}}}$.

Let $\cG_n$ be a depth 2 graph without multi-edges and with $n$ even vertices. 
Assume that $\cG_n$ is the principal graph $\cG_{M\supset N}$ of a finite index inclusion $M\supset N$ of factors. 
Then the characterization of crossed products shows that $M=N\rtimes_\alpha G$, 
and the $G$-action is unique up to inner conjugacy. 
Thus a Goldman-type theorem holds for regular permutation groups, but in a weak sense because 
the graph $\cG_n$ determines only the order $n$ of $G$, and not the group structure unless $n$ is a prime.  
Even when we specify the dual principal graph of $M\supset N$, it does not distinguish the dihedral group 
$\fD_8$ of order 8 and the quaternion group $Q_8$.  
As this example suggests, we should clarify what we really mean by a Goldman-type theorem. 
\begin{figure}[H]
\centering
\begin{tikzpicture}
\draw (0,0)--(-1.2,1)(0,0)--(-0.4,1)(0,0)--(1.2,1);
\draw(-1.2,1)node{$*$};
\draw(-1.2,1)node[above]{1};
\draw(-0.4,1)node{$\bullet$};
\draw(-0.4,1)node[above]{2};
\draw(1.2,1)node{$\bullet$};
\draw(1.2,1)node[above]{$n$};
\draw(0,0)node{$\bullet$};
\draw(0.4,1)node{$\cdots$};
\end{tikzpicture}
\caption{$\cG_n$}
\end{figure}

\begin{definition} Let $\cG$ be a bipartite graph. 
\begin{itemize}
\item[(1)] We say that a strong Goldman-type theorem for $\cG$ 
(or for $(G,X)$ if $\cG=\cG_{(G,X)}$) if the following holds: 
there exists a unique transitive permutation group $G$ on a finite set, up to permutation conjugacy, 
such that whenever the principal graph of a finite index subfactor $M\supset N$ is $\cG$, 
there exists a unique subfactor $R$ of $N$, up to inner conjugacy in $N$, satisfying $M\cap R'=\C$ and 
$$M=R\rtimes_\alpha G\supset N=R\rtimes_\alpha H,$$
where $H$ is a point stabilizer of $G$. 
\item[(2)] We say that a weak Goldman-type theorem for $\cG$ if the following holds: 
whenever the principal graph of a finite index subfactor $M\supset N$ is $\cG$, 
there exists a unique subfactor $R$ of $N$, up to inner conjugacy in $N$, satisfying $M\cap R'=\C$ and 
$$M=R\rtimes_\alpha G\supset N=R\rtimes_\alpha H,$$
for some transitive permutation group $G$ on a finite set with a point stabilizer of $H$. 
\end{itemize}
\end{definition}
Note that the action $\alpha$ is automatically unique, up to inner conjugacy, thanks to the irreducibility of $R$ in $M$.  

We will show weak Goldman-type theorems for all Frobenius groups (including sharply 2-permutation groups), 
and strong ones for sharply 3 and 4-permutation groups and for $PSL_2(q)$ acting on $PG_1(q)$. 

%%%%%%%%%%%%%%%%%%%%%%%%%%%%%%%%%%%%%%%%%%%%%%%%%%%%%%%%%%%%%%%%%%%%%%%%%%%%%%%%%%%%%%%%%%%%%%%%%%%%%%%%%%%%%%%%%%%%%%%%%%%%%%%%%%
\subsection{Intermediate subfactors}
In what follows, we use the sector notation for subfactors (see \cite[Section 2]{I98} or \cite[Subsction 2.1]{I18} for example), 
though all results are stated for general factors. 
The inclusion map $\iota:N \hookrightarrow M$ in the statements should be read as the basic bimodule $\iota={}_MM_N$ 
in the type II$_1$ case.  
In the proofs, we always assume that factors involved are either of type II$_\infty$ or type III without mentioning it. 
In the type II$_1$ case, this can be justified by either directly working on bimodules instead of sectors, 
or replacing $M\supset N$ with $M\otimes B(\ell^2)\supset N\otimes B(\ell^2)$. 
For example, assume that a statement insists existence of a subfactor $P\subset N$ with a certain property.  
In the latter case, after finding an appropriate subfactor $P\subset N\otimes B(\ell^2)$, we can pass to the corners 
$(1\otimes e)P(1\otimes e)\subset N\otimes \C e$ and the original statement can be recovered, 
where $e\in B(\ell^2)$ is a minimal projection  
(we may always assume $1\otimes e\in P$ up to inner conjugacy in $N$). 

We collect useful statements for our purpose in the next theorem concerning intermediate subfactors extracted from 
\cite[Corollary 3.10]{ILP98}. 
\ref{intermediate}
\begin{theorem}\label{intermediate} 
Let $M\supset N$ be an irreducible inclusion of factors with finite index, and 
let $\iota:N\hookrightarrow M$ be the inclusion map. 
Let 
$$[\iota\biota]=\bigoplus_{\xi\in \Lambda}n_\xi[\xi]$$
be the irreducible decomposition. 
\begin{itemize}
\item[$(1)$] Let $P$ be an intermediate subfactor between $M$ and $N$, and let $\kappa:P\hookrightarrow M$ be the inclusion map. 
If $\xi_1,\xi_2\in \Lambda$ are contained in $\kappa\bkappa$, and $\xi_3\in \Lambda$ is contained in $\xi_1\xi_2$, 
then $\xi_3$ is contained in $\kappa\bkappa$. 

\item[$(2)$] Assume that $P$ and $Q$  are intermediate subfactors between $M$ and $N$, and the inclusion maps  
$\kappa:P\hookrightarrow M$ and $\kappa_1:P_1\hookrightarrow M$ satisfy $[\kappa\bkappa]=[\kappa_1\bar{\kappa_1}]$. 
If for each $\xi\in \Lambda$ the multiplicity of $\xi$ in $\kappa\bkappa$ is either 0 or $n_\xi$, then $P=Q$. 

\item[$(3)$] Assume that $\Lambda_1$ is self-conjugate subset of $\Lambda$ such that whenever $\xi_3\in \Lambda$ 
is contained in $\xi_1\xi_2$ for some $\xi_1,\xi_2\in \Lambda_1$, we have $\xi_3\in \Lambda_1$. 
Then there exists a unique intermediate subfactor $P$ between $M$ and $N$ such that the inclusion map $\kappa :P\hookrightarrow M$ satisfies 
$$[\kappa\bkappa]=\bigoplus_{\xi\in \Lambda_1}n_\xi[\xi].$$
\end{itemize}
\end{theorem}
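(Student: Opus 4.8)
The plan is to deduce all three parts from the Q-system attached to $M\supset N$ together with the correspondence, underlying \cite[\S3]{ILP98}, between intermediate subfactors and its sub-Q-systems. Here $\iota\biota$ is the canonical endomorphism of $M$; it carries a canonical Q-system structure whose unit and multiplication $m\in\operatorname{Hom}(\iota\biota\,\iota\biota,\iota\biota)$ are built from the standard solution of the conjugate equations for $\iota$ (the multiplication being $\iota(\varepsilon)\biota$ for the counit $\varepsilon:\biota\iota\to\id$ coming from the minimal expectation $E:M\to N$). The self-intertwiner algebra $\operatorname{Hom}(\iota\biota,\iota\biota)$ is the relative commutant $N'\cap M_1$ of the basic construction, so $N'\cap M_1\cong\bigoplus_{\xi\in\Lambda}M_{n_\xi}(\C)$. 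The result I would invoke is that intermediate subfactors $N\subseteq P\subseteq M$ correspond bijectively to sub-Q-systems $\kappa\bkappa\subseteq\iota\biota$ (equivalently to biprojections in $N'\cap M_1$), where $\kappa\bkappa$ is the canonical endomorphism of $P\subset M$ and its multiplication is the restriction of $m$.

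Part (1) then says that the set of $\xi\in\Lambda$ contained in $\kappa\bkappa$ is closed under fusion inside $\Lambda$. For $\xi_1,\xi_2\preceq\kappa\bkappa$ I would choose isometric intertwiners $s_1,s_2$ into $\kappa\bkappa$, form $m\circ(s_1\times s_2)\in\operatorname{Hom}(\xi_1\xi_2,\kappa\bkappa)$, and for $\xi_3\in\Lambda$ with $\xi_3\preceq\xi_1\xi_2$ argue, via the associativity and Frobenius identities of the Q-system and the faithfulness of the minimal expectation $E_P:M\to P$, that this intertwiner is nonzero on the $\xi_3$-isotypic component; hence $\xi_3\preceq\kappa\bkappa$. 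Verifying this nondegeneracy—that the Q-system product really reaches every $\xi_3\in\Lambda$ occurring in $\xi_1\xi_2$—is the subtle step, and it is exactly where the Frobenius reciprocities and faithfulness of $E_P$ are needed.

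For part (2) I would pass to the projection picture: $\kappa\bkappa$ and $\kappa_1\bar{\kappa_1}$ are cut out by projections $e,e_1\in N'\cap M_1\cong\bigoplus_{\xi\in\Lambda}M_{n_\xi}(\C)$ whose ranks in the $\xi$-block equal the multiplicities of $\xi$. The hypothesis that each multiplicity be $0$ or $n_\xi$ forces $e,e_1$ to be \emph{central}, hence determined by their supports; since $[\kappa\bkappa]=[\kappa_1\bar{\kappa_1}]$ the supports agree, so $e=e_1$ and therefore $P=Q$ by injectivity of the correspondence. For part (3) I would build the candidate by hand: put $\theta=\bigoplus_{\xi\in\Lambda_1}n_\xi[\xi]\subseteq\iota\biota$, the full $\Lambda_1$-isotypic subobject. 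Self-conjugacy gives $\bar\theta=\theta$ and, since $\id\preceq\xi\bar\xi$ for $\xi\in\Lambda_1$ and $\id\in\Lambda$, fusion-closure forces $\id\in\Lambda_1$, so the unit of $\iota\biota$ lands in $\theta$; fusion-closure also ensures that every irreducible of $\iota\biota$ occurring in $\theta\theta$ already lies in $\Lambda_1$, so $m$ restricts to $\theta\theta\to\theta$. Thus $\theta$ is a sub-Q-system, the correspondence yields an intermediate $P$ with $[\kappa\bkappa]=\theta$, and uniqueness is case (2).

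The main obstacle is the foundational correspondence itself—that sub-Q-systems of $\iota\biota$ are exactly those of the form $\kappa\bkappa$ for an intermediate subfactor—which is the content of \cite[\S3]{ILP98} and relies on the analysis of $N'\cap M_1$, the comparison of conditional expectations, and the reconstruction of $P$ from a biprojection. Beyond that, the two points needing care are the nondegeneracy of the Q-system product in part (1) and, in part (3), checking that the restricted data $(\theta,m|_{\theta\theta},\ldots)$ genuinely satisfies the associativity and Frobenius axioms, rather than being merely a multiplicatively closed subobject.
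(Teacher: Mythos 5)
First, a point of comparison: the paper offers no proof of this theorem at all --- it is stated as ``extracted from [ILP98, Corollary 3.10]'' --- so your reconstruction via the sub-Q-system/biprojection analysis of $N'\cap M_1\cong\bigoplus_{\xi\in\Lambda}M_{n_\xi}(\C)$ is exactly the route through the cited source, not a divergent one. Your arguments for parts (2) and (3) are essentially correct modulo that correspondence: in (2) the $0$-or-$n_\xi$ hypothesis does force the Jones projections to be central, hence determined by the sector $[\kappa\bkappa]$, and injectivity of $P\mapsto e_P$ gives $P=Q$; in (3) your full $\Lambda_1$-isotypic subobject absorbs the unit (your observation that self-conjugacy plus fusion closure forces $\id\in\Lambda_1$ is the right one) and is multiplicatively closed precisely because any constituent of $\theta\theta$ mapping nontrivially to $\iota\biota$ lies in $\Lambda$ and hence in $\Lambda_1$; the Frobenius/specialness axioms you worry about are automatic for such projections in the C$^*$ setting, and uniqueness is indeed your case (2).

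The genuine gap is in part (1), and it is worse than a ``subtle step'': the nondegeneracy you assert --- that $m\circ(s_1\times s_2)$ is nonzero on every $\xi_3$-isotypic component --- is false at the stated level of generality, because statement (1), read with ``contained'' meaning multiplicity $\geq 1$, is itself false. Concretely, take an outer action of $\fS_3$ on a factor $R$ and set $N=R$, $M=R\rtimes\fS_3$, $P=R\rtimes\Z_2$ with $\Z_2$ generated by a transposition. The fusion category generated by $\iota\biota$ is $\Rep(\fS_3)$ and $[\iota\biota]$ is the regular object, so $\Lambda=\{1,\mathrm{sgn},\tau\}$ with $\tau$ the $2$-dimensional representation and $n_\tau=2$, while $[\kappa\bkappa]$ corresponds to $\ell^2(\fS_3/\Z_2)\cong 1\oplus\tau$. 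With $\xi_1=\xi_2=\tau$ we have $\mathrm{sgn}\prec\tau\tau$ and $\mathrm{sgn}\in\Lambda$, yet $\mathrm{sgn}\not\prec\kappa\bkappa$; your intertwiner $m\circ(s_1\times s_2)$ lands in $\kappa\bkappa$ and $\mathrm{Hom}(\mathrm{sgn},1\oplus\tau)=0$, so no appeal to the Frobenius identities or to faithfulness of $E_P$ can make it nonzero there. The correct statement requires $\xi_1,\xi_2$ to occur in $\kappa\bkappa$ with \emph{full} multiplicity $n_{\xi_i}$ (note that in the counterexample $\tau$ occurs with multiplicity $1<2$); under that hypothesis every element of $\mathrm{Hom}(\xi_i\iota,\iota)$ is of the form $T_i\mu$ with $T_i\in\mathrm{Hom}(\xi_i\kappa,\kappa)$, where $\iota=\kappa\mu$, and the elements $T_1\xi_1(T_2)u$ with $u\in\mathrm{Hom}(\xi_3,\xi_1\xi_2)$ lie in $\mathrm{Hom}(\xi_3\kappa,\kappa)$; the nonvanishing of some such element then follows from the nondegeneracy of the Frobenius pairing of the full canonical Q-system, which is where your matrix-coefficient intuition is actually valid. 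This repair is harmless for the paper itself --- in every application of the theorem only parts (2) and (3) are used, always with full multiplicities --- but a proof of (1) as you have set it up would be attempting to prove a false statement.
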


%%%%%%%%%%%%%%%%%%%%%%%%%%%%%%%%%%%%%%%%%%%%%%%%%%%%%%%%%%%%%%%%%%%%%%%%%%%%%%%%%%%%%%%%%%%%%%%%%%%%%%%%%%%%%%%%%%%%%%%%%%%%%%%%%%
\subsection{The strategy of the proofs}

Let $\Gamma$ be a doubly transitive permutation group acting on a finite set $X$, and let $x_1,x_2\in X$ be distinct points. 
We further assume that the $\Gamma_{x_1,x_2}$-action on $X\setminus\{x_1,x_2\}$ has no orbit of length 1. 
Our basic strategy to prove a Goldman-type theorem for $\Gamma>\Gamma_{x_1}$ is to reduce it to that of 
$\Gamma_{x_1}>\Gamma_{x_1,x_2}$. 
To explain it, we first discuss the relationship between the group-subgroup subfactor of the former 
and that of the latter. 
We denote $G=\Gamma_{x_1}$ and $H=\Gamma_{x_1,x_2}$ for simplicity.

Assume that we are given an outer action $\alpha$ of $\Gamma$ on a factor $R$. 
We set $N=R\rtimes_\alpha H$, $M=R\rtimes_\alpha G$, and $L=R\rtimes_\alpha \Gamma$.  
We denote by $\iota_1:M\hookrightarrow L$, $\iota_2:N\hookrightarrow M$, and $\iota_3: R\hookrightarrow N$ 
the inclusion maps. 
Since the $\Gamma$-action on $X$ is doubly transitive, there exists $g_0\in \Gamma$ exchanging $x_1$ and $x_2$. 
Such $g_0$ normalizes $H$, and we get $\theta\in \Aut(N)$ extending $\alpha_{g_0}$, 
that is $\theta\iota_3=\iota_3\alpha_{g_0}$. 
Let 
$$[\iota_3\bar{\iota_3}]=\bigoplus_{\pi\in \hat{H}}d(\pi)[\beta_\pi]$$ 
be the irreducible decomposition. 
The automorphism $\theta$ as above is not unique, and there is always a freedom to replace $\theta$ with $\theta\beta_\pi$ 
with $d(\pi)=1$.  

Since 
$$[\iota_1\iota_2\theta\iota_3]=[\iota_1\iota_2\iota_3\alpha_{g_0}]=[\iota_1\iota_2\iota_3],$$
we have 
$$1=\dim(\iota_1\iota_2\theta\iota_3,\iota_1\iota_2\iota_3)=(\iota_2\theta\iota_3\bar{\iota_3}\bar{\iota_2},\bar{\iota_1}\iota_1)
=\sum_{\pi\in\widehat{H}}d(\pi)\dim(\iota_2\theta\beta_\pi\bar{\iota_2},\bar{\iota_1}\iota_1).$$
We claim $(\iota_2\theta\beta_\pi\bar{\iota_2},\id)=0$ for all $\pi$. 
Indeed, if it were not the case, we would have $\pi$ with $d(\beta\pi)=1$ satisfying  $[\iota_2\theta\beta_\pi]=[\iota_2]$ 
thanks to the Frobenius reciprocity. 
However, this implies that $\theta\beta_\pi$ would be contained in $\overline{\iota_2}\iota_2$. 
Since $d(\theta\beta_\pi)=1$, this contradicts the assumption that the $H$-action on $G/H\setminus H$ has no 
orbit of length 1. 

Since $\Gamma$ is doubly transitive, there exists irreducible $\tau$ with $d(\tau)=|X|-1$ satisfying 
$[\bar{\iota_1}\iota_1]=[\id]\oplus [\tau]$. 
On the other hand, we have $d(\iota_2\theta\beta_\pi\bar{\iota_2})=(|X|-1)d(\pi)$, which shows that 
there exists $\pi\in \widehat{H}$ with $d(\pi)=1$ satisfying $[\tau]=[\iota_2\theta\beta_\pi\bar{\iota_2}]$. 
This means that by replacing $\theta$ with $\theta\beta_\pi$ if necessary, we may always assume 
$$[\bar{\iota_1}\iota_1]=[\id]\oplus [\iota_2\theta\bar{\iota_2}].$$

Now forget about $R$, $\alpha$, $N$, and assume that we are just given an inclusion $L\supset M$ with 
$\cG_{L\supset M}=\cG_{\Gamma>G}$. 
We denote by $\iota_1:M\hookrightarrow L$ the inclusion map. 
We assume that a Goldman-type theorem is known for $G>H$. 
Our task is to recover $R$ and $\alpha$ from the inclusion $L\supset M$. 
Our strategy is divided into the following steps:

\begin{itemize}
\item[(1)] Find a fusion subcategory $\cC_1$ in the fusion category $\cC$ generated by $\bar{\iota_1}\iota_1$ 
that looks like the representation category of $G$.  
. 
\item[(2)] Show that the object in $\cC_1$ corresponding to the induced representation $\Ind_H^G1$ has a unique $Q$-system 
satisfying the following condition: 
if $N\subset M$ is the subfactor corresponding to the $Q$-system and $\iota_2:N\hookrightarrow M$ is the inclusion map, 
then there exists $\theta \in \Aut(N)$ satisfying 
$$[\bar{\iota_1}\iota_1]=[\id]\oplus [\iota_2\theta\bar{\iota_2}].$$
\item[(3)] Show $\cG_{M\supset N}=\cG_{G>H}$. 
\item[(4)] Apply the Goldman-type theorem for $G>H$ to $M\supset N$, and obtain a subfactor $R$ and an outer action 
$\gamma$ of $G$ on $R\subset N$ satisfying $M=R\rtimes_\gamma G$ and $N=R\rtimes_\gamma H$. 
Show that $R$ is irreducible in $L$. 
Let $\iota_3:R\hookrightarrow N$ be the inclusion map. 
\item[(5)] Show that $L\supset R$ is a depth 2 inclusion. 
\item[(6)] Show that there exists $\theta_1\in \Aut(R)$ satisfying $[\theta\iota_3]=[\iota_3\theta_1]$. 
\end{itemize}

\begin{lemma}\label{finishing} Assume that the above (1)-(6) are accomplished. 
Then there exist a finite group $\Gamma_0$ including $G$ as a subgroup of index $|X|$, and an outer action $\alpha$ 
of $\Gamma_0$ on $R$ such that $\alpha$ is an extension of $\gamma$ and $L=R\rtimes_\alpha \Gamma_0$. 
Moreover, the action of $\Gamma_0$ on $\Gamma_0/G$ is a doubly transitive extension of the $G$-action on 
$X\setminus \{x_0\}$.  
\end{lemma}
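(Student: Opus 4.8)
The plan is to apply the characterization of crossed products to the inclusion $R\subset L$ and then to read the group structure off the bimodule data assembled in (1)--(6), with the point stabilizers recovered from $\theta$ and $\theta_1$. Throughout, $x_1$ plays the role of the distinguished point (the $x_0$ in the statement), so that $G=\Gamma_{x_1}$ and $H=\Gamma_{x_1,x_2}$.

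First I would analyze the inclusion $R\subset L$, whose inclusion map is $\iota_1\iota_2\iota_3$. By (4) the subfactor $R$ is irreducible in $L$, and by (5) the inclusion $L\supset R$ has depth $2$; so the irreducible $R$-$R$ bimodules occurring in $L$ are the constituents of the dual canonical endomorphism $\bar{\iota_3}\,\bar{\iota_2}\,\bar{\iota_1}\iota_1\iota_2\iota_3$. Substituting $[\bar{\iota_1}\iota_1]=[\id]\oplus[\iota_2\theta\bar{\iota_2}]$ from (2) and regrouping gives
\[
[\bar{\iota_3}\,\bar{\iota_2}\,\bar{\iota_1}\iota_1\iota_2\iota_3]=[\bar{\iota_3}\,\bar{\iota_2}\iota_2\iota_3]\oplus[\bar{\iota_3}\,\bar{\iota_2}\,\tau\,\iota_2\iota_3],\qquad \tau=\iota_2\theta\bar{\iota_2}.
\]
Since $M=R\rtimes_\gamma G$ by (4), the first summand is the dual canonical endomorphism of $R\subset M$, namely $\bigoplus_{g\in G}\gamma_g$, a sum of automorphisms. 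The second summand is the compression of the $M$-$M$ endomorphism $\tau$ along $R\subset M$, and here I would use the intertwiner $[\theta\iota_3]=[\iota_3\theta_1]$ of (6) to push $\theta$ down to the automorphism $\theta_1$ of $R$: together with $\bar{\iota_3}\,\bar{\iota_2}\iota_2\iota_3=\bigoplus_{g\in G}\gamma_g$ this identifies its irreducible constituents with composites $\gamma_g\theta_1\gamma_{g'}$, again automorphisms. (The dimension count $|G|+|G|(|X|-1)=|G|\,|X|$ confirms there is exactly room for $|G|\,|X|$ automorphisms.) Consequently every irreducible $R$-$R$ bimodule in $L$ is invertible, so $\cG_{L\supset R}=\cG_n$ with $n=|G|\,|X|$, and the crossed-product characterization yields a finite group $\Gamma_0$ with an outer action $\alpha$ on $R$ such that $L=R\rtimes_\alpha\Gamma_0$; here $\Gamma_0$ is the group of classes in $\Out(R)$ of the automorphisms appearing above.

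Next I would locate $G$ inside $\Gamma_0$. The automorphisms $\gamma_g$ $(g\in G)$ already occur in the sub-bimodule ${}_RM_R\subset{}_RL_R$, so $g\mapsto[\alpha_g]=[\gamma_g]$ embeds $G$ into $\Gamma_0$; thus $\alpha$ restricts to $\gamma$ on $G$ up to inner perturbation, and $M=R\rtimes_\alpha G$ is the intermediate subfactor attached to this subgroup. Comparing indices gives $[\Gamma_0:G]=[L:M]=|X|$, hence $|\Gamma_0|=|X|\,|G|$. For double transitivity I would invoke the standard correspondence between the irreducible $M$-$M$ bimodules in $L$ and the double cosets in $G\backslash\Gamma_0/G$: the relation $[\bar{\iota_1}\iota_1]=[\id]\oplus[\iota_2\theta\bar{\iota_2}]$ of (2) displays exactly two irreducible summands, so there are exactly two double cosets, $G$ and a single $Gg_0G$. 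Hence $G$ acts transitively on $\Gamma_0/G\setminus\{G\}$, i.e.\ $\Gamma_0$ acts doubly transitively on $\Gamma_0/G$ (a set of size $|X|$) with point stabilizer $G$, and the coset $G$ plays the role of $x_1$. Finally I would identify the two-point stabilizer: the element $g_0$ is the class of $\theta_1=\alpha_{g_0}|_R$, and the existence of $\theta\in\Aut(N)$ extending $\theta_1$ forces $\alpha_{g_0}$ to normalize the image of $H$ in $\Out(R)$; since $\alpha$ is outer this means $g_0Hg_0^{-1}=H$, whence $H\subseteq G\cap g_0Gg_0^{-1}$. Double transitivity gives $[G:G\cap g_0Gg_0^{-1}]=|X|-1=[G:H]$, so $G\cap g_0Gg_0^{-1}=H$, and therefore $\Gamma_0/G\setminus\{G\}\cong G/H\cong X\setminus\{x_1\}$ as $G$-sets. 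This exhibits $\Gamma_0\curvearrowright\Gamma_0/G$ as a doubly transitive extension of $G\curvearrowright X\setminus\{x_1\}$, completing the proof.

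I expect the main obstacle to be the first computation, namely proving that the dual canonical endomorphism of $R\subset L$ decomposes entirely into automorphisms. A priori a depth-$2$ irreducible inclusion only produces a Kac-algebra crossed product, whose $R$-$R$ fusion category need not be pointed; it is precisely the interplay of (2) and (6) --- carried by the intertwiner $[\theta\iota_3]=[\iota_3\theta_1]$, which transports the extension automorphism $\theta$ into an honest automorphism $\theta_1$ of $R$ --- that upgrades the Kac-algebra to a genuine finite-group crossed product. Once invertibility of all $R$-$R$ bimodules is secured, the remaining steps (the index count, the double-coset count, and the normalizer argument for the two-point stabilizer) are essentially bookkeeping within the established crossed-product picture.
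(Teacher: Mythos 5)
Your proof is correct, and it takes a genuinely different route from the paper's, which is worth recording. The paper never decomposes the dual canonical endomorphism of $R\subset L$ completely: it only observes that $[\bar{\iota_3}\bar{\iota_2}\bar{\iota_1}\iota_1\iota_2\iota_3]$ \emph{contains} $\bigoplus_{g\in G}[\gamma_g]\oplus\bigoplus_{h\in H}[\gamma_h\theta_1]$, uses depth $2$ (step (5)) to conclude that the invertible sectors occurring in it form a group $\Gamma_0$ strictly larger than $[\gamma_G]$, and then closes the gap between $R\rtimes\Gamma_0$ and $L$ by the absence of intermediate subfactors between $M$ and $L$, which follows from Theorem \ref{intermediate} since $[\bar{\iota_1}\iota_1]=[\id]\oplus[\tau]$ has exactly two irreducible summands. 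You instead prove the stronger statement that the dual canonical is entirely pointed: your identification of the constituents of the second summand with $[\gamma_g\theta_1\gamma_{g'}]$ is cashed out by the multiplicity-free coset decomposition $[\bar{\iota_2}\iota_2\iota_3]=\bigoplus_{Hg\in H\backslash G}[\iota_3\gamma_g]$ (which indeed follows from step (4) by Frobenius reciprocity), applying (6) in each slot, and then you invoke the Pimsner--Popa/Kosaki characterization directly. This buys you something: step (5) becomes logically redundant in your argument (pointedness of the dual canonical re-proves depth $2$), and Theorem \ref{intermediate} is not needed at this stage; the cost is that you use the full strength of (6) throughout, where the paper needs only a single occurrence of $\theta_1$ plus the rigidity of the two-double-coset structure. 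Two points you gloss: distinctness of the $|G||X|$ automorphism classes, needed for $\cG_{L\supset R}=\cG_n$ without multi-edges and for outerness of $\alpha$, does not follow from the dimension count alone but from the bound that each sector occurs in the dual canonical of the irreducible inclusion $R\subset L$ with multiplicity at most its dimension \cite{ILP98}; and arranging $\alpha$ to be an honest extension of $\gamma$ with $R\rtimes_\alpha G=M$ (not merely up to inner conjugacy) uses the uniqueness of the intermediate subfactor attached to the fusion-closed subset $\{[\gamma_g]\}_{g\in G}$. Finally, for the last clause your argument is more explicit than the paper's appeal to the shape of $\cG_{\Gamma>G}$: the double-coset count from $[\bar{\iota_1}\iota_1]=[\id]\oplus[\iota_2\theta\bar{\iota_2}]$ gives double transitivity, and your normalizer argument, which rests on $[\iota_3\gamma_h]=[\iota_3]$ and $[\theta\iota_3]=[\iota_3\theta_1]$ forcing $[\theta_1][\gamma_H][\theta_1]^{-1}=[\gamma_H]$ in $\Out(R)$, correctly identifies the two-point stabilizer with $H$ and hence the $G$-set $\Gamma_0/G\setminus\{G\}$ with $X\setminus\{x_1\}$, a verification the paper leaves implicit.
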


\begin{proof} By (2),
$$[\bar{\iota_3}\bar{\iota_2}\bar{\iota_1}\iota_1\iota_2\iota_3]
=[\bar{\iota_3}\bar{\iota_2}(\id\oplus \iota_2\theta\bar{\iota_2})\iota_2\iota_3]
=\bigoplus _{g\in G} [\gamma_g]\oplus [\bar{\iota_3}\bar{\iota_2}\iota_2\theta\bar{\iota_2}\iota_2\iota_3],$$
which contains 
$$\bigoplus _{g\in G} [\gamma_g]\oplus [\bar{\iota_3}\theta\iota_3]
=\bigoplus _{g\in G} [\gamma_g]\oplus [\bar{\iota_3}\iota_3\theta_1]
=\bigoplus _{g\in G} [\gamma_g]\oplus \bigoplus_{h\in H}[\gamma_h\theta_1].$$
by (6). 
Let $\Gamma_0$ be the group of 1-dimensional sectors contained in 
$[\bar{\iota_3}\bar{\iota_2}\bar{\iota_1}\iota_1\iota_2\iota_3]$. 
Then $\Gamma_0$ is strictly larger than $[\gamma_G]$, and $R\rtimes \Gamma_0$ is a subfactor of $L$ strictly larger than $M$. 
Thanks to Theorem \ref{intermediate}, there is no non-trivial intermediate subfactor 
between $L$ and $M$, and we conclude $L=R\rtimes \Gamma_0$. 
From the shape of the graph $\cG_{\Gamma>G}$, we can see that the $\Gamma_0$-action on $\Gamma_0/G$ is doubly transitive. 
\end{proof}

To identify $\Gamma_0$ with $\Gamma$, we will use the classification of doubly transitive permutation groups. 

In concrete examples treated in this paper, (1) and (3) are purely combinatorial arguments,  
(2) follows from Theorem \ref{intermediate}, (4) is an induction hypothesis, and 
(5) is a simple computation of dimensions.  
To deal with (6), we give useful criteria now. 

\begin{lemma}\label{inv1} Let $G$ be a transitive permutation group on a finite set with a point stabilizer $H$, and 
let $\alpha$ be an outer action of $G$ on a factor $R$.
Let $M=R\rtimes_\alpha G\supset N=R\rtimes_\alpha H$. 
Let $L$ be a factor including $M$ as an irreducible subfactor of index $|G/H|+1$. 
We denote by $\iota_1:M\hookrightarrow L$, $\iota_2:N\hookrightarrow M$, and $\iota_3: R\hookrightarrow N$ 
the inclusion maps. 
We assume the following two conditions: 
\begin{itemize}
\item[$(1)$] The inclusion $L\supset R$ is irreducible and of depth 2. 
\item[$(2)$] There exists $\theta\in \Aut(N)$ satisfying $[\overline{\iota_1}\iota_1]=[\id]\oplus [\iota_2\theta\bar{\iota_2}]$  
\end{itemize}
Then we have 
$$\dim(\theta\bar{\iota_2}\iota_2\iota_3\bar{\iota_3}\theta^{-1},\bar{\iota_2}\iota_2\iota_3\bar{\iota_3})=|H|.$$
\end{lemma}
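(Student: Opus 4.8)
The plan is to compute the left-hand side by Frobenius reciprocity, turning it into a multiplicity count inside the dual canonical endomorphism of $L\supset R$, which the two hypotheses pin down completely. First I would factor the two $N$-$N$ sectors as products of an $N$-$R$ sector with a conjugate,
$$\theta\bar{\iota_2}\iota_2\iota_3\bar{\iota_3}\theta^{-1}=(\theta\bar{\iota_2}\iota_2\iota_3)\,\overline{(\theta\iota_3)},\qquad \bar{\iota_2}\iota_2\iota_3\bar{\iota_3}=(\bar{\iota_2}\iota_2\iota_3)\,\overline{\iota_3},$$
and apply Frobenius reciprocity to move the conjugated factors across. This rewrites the quantity as an intertwiner space of $R$-$R$ sectors,
$$\dim(\theta\bar{\iota_2}\iota_2\iota_3\bar{\iota_3}\theta^{-1},\bar{\iota_2}\iota_2\iota_3\bar{\iota_3})=\dim(Z,\bar{\iota_3}\theta\iota_3),\qquad Z:=\bar{\iota_3}\bar{\iota_2}\iota_2\theta\bar{\iota_2}\iota_2\iota_3 .$$
Writing $\iota_1\iota_2\iota_3:R\hookrightarrow L$ for the composite inclusion and using $(2)$ together with $M=R\rtimes_\alpha G$, the dual canonical endomorphism of $L\supset R$ splits as
$$[\bar{\iota_3}\bar{\iota_2}\bar{\iota_1}\iota_1\iota_2\iota_3]=\bigoplus_{g\in G}[\alpha_g]\oplus[Z],$$
since $[\bar{\iota_1}\iota_1]=[\id]\oplus[\iota_2\theta\bar{\iota_2}]$ and $[\bar{\iota_3}\bar{\iota_2}\iota_2\iota_3]=\bigoplus_{g\in G}[\alpha_g]$.

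Now I would feed in the two hypotheses on $L\supset R$. As $L\supset R$ is irreducible and each $\alpha_g$ ($g\in G$) is implemented by a unitary of $M\subset L$, one has $[\iota_1\iota_2\iota_3\alpha_g]=[\iota_1\iota_2\iota_3]$, so each $[\alpha_g]$ occurs in the dual canonical endomorphism with multiplicity one; being pairwise inequivalent, the $[\alpha_g]$ are exhausted by the summand $\bigoplus_{g}[\alpha_g]$ and do not occur in $[Z]$. As $L\supset R$ is moreover depth $2$, its dual canonical endomorphism is the regular object $\bigoplus_{\rho}\dim(\rho)[\rho]$ of the fusion category $\cC$ of $R$-$R$ sectors it generates, so every irreducible $[\rho]$ occurring in $[Z]$ occurs there with multiplicity $\dim\rho$. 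Finally $[\id]\prec[\bar{\iota_2}\iota_2]$ (the inclusion $N\subset M$ being irreducible) gives $[\bar{\iota_3}\theta\iota_3]\prec[Z]$, so every constituent of $\bar{\iota_3}\theta\iota_3$ is such a $[\rho]$ and appears in $[Z]$ with multiplicity $\dim\rho$; hence
$$\dim(Z,\bar{\iota_3}\theta\iota_3)=\sum_{\rho}\dim(\rho)\,\dim(\rho,\bar{\iota_3}\theta\iota_3)=\dim(\bar{\iota_3}\theta\iota_3)=|H|,$$
the last step because $\dim\iota_3=\dim\bar{\iota_3}=|H|^{1/2}$ and $\dim\theta=1$.

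The Frobenius-reciprocity reduction is routine; the delicate point, where both hypotheses are genuinely needed, is the separation of the $[\alpha_g]$ from the constituents of $Z$. Without it the count would only give $|H|-\sum_{g\in G}\dim(\alpha_g,\bar{\iota_3}\theta\iota_3)$, and one would have to exclude by hand the possibility that $[\theta\iota_3]=[\iota_3\alpha_g]$ for some $g\in G$. It is precisely irreducibility (forcing multiplicity one for each $[\alpha_g]$) and depth $2$ (forcing the remaining multiplicities to equal the dimensions) that close this gap, so I expect this bookkeeping, rather than the sector algebra, to be the main thing to get right.
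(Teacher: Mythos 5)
Your proof is correct, and while the first half coincides with the paper's, the second half takes a genuinely different route. Both arguments start from the same decomposition of the dual canonical endomorphism $\gamma'=\bar{\iota_3}\bar{\iota_2}\bar{\iota_1}\iota_1\iota_2\iota_3$ of $L\supset R$ as $[\gamma']=\bigoplus_{g\in G}[\alpha_g]\oplus[Z]$ with $Z=\bar{\iota_3}\bar{\iota_2}\iota_2\theta\bar{\iota_2}\iota_2\iota_3$. The paper then uses depth $2$ only in its scalar form $\dim(\gamma',\gamma')=[L:R]=(|G/H|+1)|G|$, subtracts $|G|$ to get $\dim(Z,Z)=|G/H|\,|G|$, Frobenius-flips this to $\dim(\theta\bar{\iota_2}\iota_2\iota_3\bar{\iota_3}\bar{\iota_2}\iota_2\theta^{-1},\bar{\iota_2}\iota_2\iota_3\bar{\iota_3}\bar{\iota_2}\iota_2)$, and divides out $|G/H|^2$ via the representation-theoretic identity $[\bar{\iota_2}\iota_2\iota_3\bar{\iota_3}\bar{\iota_2}\iota_2]=|G/H|\,[\bar{\iota_2}\iota_2\iota_3\bar{\iota_3}]$ (the regular representation of $G$ restricted to $H$ is $|G/H|$ copies of the regular representation of $H$). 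You instead flip the target directly to $\dim(Z,\bar{\iota_3}\theta\iota_3)$ and evaluate it with the pointwise form of depth $2$ (each irreducible constituent of $\gamma'$ occurs with multiplicity equal to its dimension) together with the multiplicity-one separation of the $[\alpha_g]$, obtained from $\dim(\alpha_g,\gamma')=\dim(\iota_1\iota_2\iota_3\alpha_g,\iota_1\iota_2\iota_3)=1$ and outerness. Your route avoids both the quadratic count $\dim(Z,Z)$ and the restriction-of-the-regular-representation identity, and it has the additional merit of making explicit the orthogonality $\dim(\alpha_g,Z)=0$, which the paper uses tacitly when it writes $\dim(\bigoplus_g\alpha_g\oplus Z,\bigoplus_g\alpha_g\oplus Z)=|G|+\dim(Z,Z)$ (the cross terms do vanish, by the same multiplicity-one argument you give, but the paper does not say so); conversely, the paper's version gets by with only the numerical characterization of depth $2$ rather than the regular-object structure. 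Since both facts about depth-$2$ inclusions come from the same source (the Kac-algebra picture of \cite{ILP98}), the difference is one of bookkeeping rather than of underlying input, but your bookkeeping is the more careful of the two.
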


\begin{proof} Since $[L:M]=(|G/H|+1)|G|$, the depth 2 condition implies  
\begin{align*}
(|G/H|+1)|G|&=\dim (\bar{\iota_3}\bar{\iota_2}\bar{\iota_1}\iota_1\iota_2\iota_3,\bar{\iota_3}\bar{\iota_2}\bar{\iota_1}\iota_1\iota_2\iota_3) \\
 &=\dim (\bar{\iota_3}\bar{\iota_2}(\id\oplus \iota_2\theta\bar{\iota_2})\iota_2\iota_3,\bar{\iota_3}\bar{\iota_2}(\id\oplus \iota_2\theta\bar{\iota_2}) \iota_2\iota_3) \\
 &=\dim(\bigoplus_{g\in G} \alpha_g\oplus \bar{\iota_3}\bar{\iota_2}\iota_2\theta\bar{\iota_2}\iota_2\iota_3,\bigoplus_{g\in G} \alpha_g\oplus \bar{\iota_3}\bar{\iota_2}\iota_2\theta\bar{\iota_2}\iota_2\iota_3)\\
 &=|G|+\dim(\bar{\iota_3}\bar{\iota_2}\iota_2\theta\bar{\iota_2}\iota_2\iota_3, \bar{\iota_3}\bar{\iota_2}\iota_2\theta\bar{\iota_2}\iota_2\iota_3),
\end{align*}
and 
$$|G/H||G|=\dim(\bar{\iota_3}\bar{\iota_2}\iota_2\theta\bar{\iota_2}\iota_2\iota_3, \bar{\iota_3}\bar{\iota_2}\iota_2\theta\bar{\iota_2}\iota_2\iota_3)
 =\dim(\theta\bar{\iota_2}\iota_2\iota_3\bar{\iota_3}\bar{\iota_2}\iota_2\theta^{-1},\bar{\iota_2}\iota_2\iota_3\bar{\iota_3}\bar{\iota_2}\iota_2),$$
by the Frobenius reciprocity. 
Thus to prove the statement, it suffices to show 
$$[\bar{\iota_2}\iota_2\iota_3\bar{\iota_3}\bar{\iota_2}\iota_2]=|G/H|[\bar{\iota_2}\iota_2\iota_3\bar{\iota_3}].$$
Indeed, note that $\iota_2\iota_3\bar{\iota_3}\bar{\iota_2}$ is an $M$-$M$ sector corresponding to the regular representation of $G$,  
and hence $(\iota_2\iota_3\bar{\iota_3}\bar{\iota_2})\iota_2$ is an $M$-$N$ sector corresponding to the restriction of the regular 
representation of $G$ to $H$, which is equivalent to $|G/H|$ copies of the regular representation of $H$. 
Since $\iota_3\bar{\iota_3}$ is an $N$-$N$ sector corresponding the regular representation of $H$, we get 
$$[(\iota_2\iota_3\bar{\iota_3}\bar{\iota_2})\iota_2]=|G/H|[\iota_2(\bar{\iota_3}\iota_3)],$$
which finishes the proof. 
\end{proof}

In concrete cases where Lemma \ref{inv1} is applied, we can further show 
$$\dim(\theta\iota_3\bar{\iota_3}\theta^{-1},\iota_3\bar{\iota_3})=|H|,$$
resulting in $[\theta\iota_3\bar{\iota_3}\theta^{-1}]=[\iota_3\bar{\iota_3}]$. 

From Theorem 3.3 and Lemma 4.1 in \cite{IK02}, we can show the following global invariance criterion.  

\begin{lemma}\label{inv2} Let $H$ be a finite group and let $\alpha$ be an outer action of $H$ on a factor $R$. 
Let $N=R\rtimes_\alpha H$, and let $\iota:R\hookrightarrow N$ be the inclusion map. 
We assume that there is no non-trivial abelian normal subgroup $K\triangleleft H$ with a non-degenerate 
cohomology class $\omega\in H^2(\widehat{K},\T)$ invariant under the $H$-action by conjugation. 
If $\theta\in \Aut(N)$ satisfies $[\theta\iota\biota\theta^{-1}]=[\iota\biota]$, there exists 
$\theta_1\in \Aut(R)$ satisfying $[\theta\iota]=[\iota\theta_1]$. 
\end{lemma}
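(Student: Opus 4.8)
The plan is to read the statement as a descent (invariance) result for the canonical sector and to reduce it to the machinery of \cite{IK02}. First I would record what the conclusion means geometrically: $[\theta\iota]=[\iota\theta_1]$ for some $\theta_1\in\Aut(R)$ is equivalent to the existence of a unitary $u\in N$ with $\Ad(u)\circ\theta(R)=R$, i.e.\ $\theta$ can be perturbed by an inner automorphism of $N$ so as to preserve the subfactor $R$ globally. Thus the lemma asserts that an automorphism which merely fixes the class of the canonical $N$-$N$ sector $[\iota\biota]$ automatically preserves $R$ up to inner perturbation. Since $N=R\rtimes_\alpha H$, the $N$-$N$ sectors coming from $R\subset N$ realize $\Rep(H)$, and $[\iota\biota]=\bigoplus_{\pi\in\widehat H}(\dim\pi)[\beta_\pi]$ is the object corresponding to the regular representation; the hypothesis $[\theta\iota\biota\theta^{-1}]=[\iota\biota]$ says exactly that $\theta$ induces a dimension- and fusion-preserving permutation of the $\beta_\pi$. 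This tensor-autoequivalence data alone is not enough to force $[\theta\iota]=[\iota\theta_1]$, which is precisely why a cohomological obstruction must enter.

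The second and main step is to invoke the subfactor second-cohomology framework of \cite{IK02}. To an automorphism $\theta$ fixing the canonical sector one attaches an obstruction class, and Theorem 3.3 of \cite{IK02} shows that $\theta$ satisfies $[\theta\iota]=[\iota\theta_1]$ precisely when this class vanishes, the totality of obstructions being governed by a single cohomological invariant of the inclusion $R\subset N$. For a crossed product by $H$ this invariant is computed in Lemma 4.1 of \cite{IK02}: it is non-trivial exactly when there is a non-trivial abelian normal subgroup $K\triangleleft H$ carrying an $H$-invariant non-degenerate class $\omega\in H^2(\widehat K,\T)$. The presence of such a pair $(K,\omega)$ produces, through the full matrix structure of the twisted group algebra $\C_\omega[\widehat K]$, an automorphism that scrambles $R$ while fixing the canonical sector; this is the mechanism behind the Kodiyalam--Sunder coincidence of $\fS_4>\Z_4$ with $\fS_4>\Z_2\times\Z_2$. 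I would therefore verify that the hypotheses of Theorem 3.3 hold for $R\subset N$ (irreducibility and depth $2$, both immediate for an outer crossed product) and then quote Lemma 4.1.

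Finally, the standing assumption of the present lemma is word-for-word the statement that no such $(K,\omega)$ exists, so by Lemma 4.1 the obstruction group is trivial, and by Theorem 3.3 every $\theta$ with $[\theta\iota\biota\theta^{-1}]=[\iota\biota]$ satisfies $[\theta\iota]=[\iota\theta_1]$ for some $\theta_1\in\Aut(R)$, which is the desired conclusion. The part requiring genuine care is the middle step: matching the abstract obstruction of \cite{IK02} with the concrete group-cohomological condition, and checking that the non-degeneracy and $H$-invariance appearing there coincide with the notions used in the hypothesis---in particular that the relevant cohomology is taken on the Pontryagin dual $\widehat K$ rather than on $K$ itself. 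Once this dictionary is fixed the result follows as a direct citation.
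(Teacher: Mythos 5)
Your proposal is correct and follows essentially the same route as the paper, which offers no argument for Lemma \ref{inv2} beyond the remark that it follows from Theorem 3.3 and Lemma 4.1 of \cite{IK02}: you carry out exactly that reduction, correctly translating $[\theta\iota]=[\iota\theta_1]$ into inner conjugacy of $\theta(R)$ and $R$, checking the hypotheses (irreducibility and depth 2 of the crossed-product inclusion $R\subset N$), and matching the cohomological hypothesis---including the point that the cocycle lives on $\widehat{K}$ rather than $K$---with the crossed-product computation of the second cohomology in \cite{IK02}. Nothing further is needed.
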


Even when the cohomological assumption in Lemma \ref{inv2} is not fulfilled, we still have a chance to apply the following criterion. 
For an inclusion $N\supset R$ of factors, we denote by $\Aut(N,R)$ the set of automorphisms of $N$ globally preserving $R$. 

\begin{lemma}\label{inv3} 
Let $N\supset R$ be an irreducible inclusion of factors with finite index, and let $P$ be an intermediate subfactor between $N$ and $R$. 
We denote by $\iota:R\hookrightarrow N$ and $\kappa:P\hookrightarrow N$ the inclusion maps. 
Let  
$$[\iota\biota]=\bigoplus_{\xi\in \Lambda}n_\xi[\xi]$$
be the irreducible decomposition. 
We assume that for each $\xi\in \Lambda$ the multiplicity of $\xi$ in $\kappa\bkappa$ is either 0 or $n_\xi$. 
If $\theta\in \Aut(N,R)$ satisfies $[\theta\kappa\bkappa\theta^{-1}]=[\kappa\bkappa]$, then $\theta(P)=P$. 
\end{lemma}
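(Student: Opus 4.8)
The plan is to reduce the statement to the uniqueness part of Theorem~\ref{intermediate}(2). Since $\theta\in\Aut(N,R)$ satisfies $\theta(N)=N$ and $\theta(R)=R$, and since $N\supset P\supset R$, the image $Q:=\theta(P)$ is again a factor sitting between $N$ and $R$; thus $Q$ is an intermediate subfactor of $N\supset R$, of the same finite index as $P$. Let $\kappa_1:Q\hookrightarrow N$ be its inclusion map. The whole argument then rests on the identity
$$[\kappa_1\bar{\kappa_1}]=[\theta\kappa\bkappa\theta^{-1}],$$
which expresses that conjugating by $\theta$ carries the intermediate-subfactor sector of $P$ exactly to the intermediate-subfactor sector of $\theta(P)$. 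Granting this, the hypothesis $[\theta\kappa\bkappa\theta^{-1}]=[\kappa\bkappa]$ yields $[\kappa_1\bar{\kappa_1}]=[\kappa\bkappa]$, and since by assumption the multiplicity of each $\xi\in\Lambda$ in $\kappa\bkappa$ is either $0$ or $n_\xi$, Theorem~\ref{intermediate}(2) forces $P=Q=\theta(P)$, which is the desired conclusion.

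The key step is therefore the displayed identity, which I would establish by a direct sector computation. Write $\sigma:=\theta|_P:P\to Q$ for the restriction of $\theta$, a $*$-isomorphism onto $Q$. As homomorphisms $P\to N$ one has $\kappa_1\sigma=\theta\kappa$, since both send $p\in P$ to $\theta(p)\in N$; equivalently $\kappa_1=\theta\kappa\sigma^{-1}$. Passing to conjugate sectors, and using that an automorphism and an isomorphism are their own inverses as sectors (so $[\bar{\theta}]=[\theta^{-1}]$, $[\bar{\sigma}]=[\sigma^{-1}]$, and hence $[\overline{\sigma^{-1}}]=[\sigma]$), one gets $[\bar{\kappa_1}]=[\sigma\bkappa\theta^{-1}]$. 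Composing and cancelling $\sigma^{-1}\sigma=\id_P$,
$$[\kappa_1\bar{\kappa_1}]=[\theta\kappa\sigma^{-1}\sigma\bkappa\theta^{-1}]=[\theta\kappa\bkappa\theta^{-1}],$$
which is the identity needed above.

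The only genuine care required, and the step most prone to error, is the conjugate bookkeeping: one must track on which factors the sectors $\sigma$, $\theta$, $\kappa$, and $\kappa_1$ live so that all composites are legitimate $N$-$N$ sectors, and then apply $\overline{\rho_1\rho_2}=\bar{\rho_2}\bar{\rho_1}$ together with $[\bar{\sigma}]=[\sigma^{-1}]$ in the correct order. Everything else is formal: $Q$ is an intermediate subfactor precisely because $\theta$ fixes both $N$ and $R$ setwise, the multiplicity hypothesis is inherited verbatim from the statement, and the conclusion $\theta(P)=P$ then follows immediately from Theorem~\ref{intermediate}(2). No analytic input beyond the finiteness of the index $[N:R]$ (which guarantees that the relevant sectors decompose into finitely many irreducibles) enters the proof.
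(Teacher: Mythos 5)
Your proof is correct and is essentially identical to the paper's: the paper also sets $Q=\theta(P)$, uses the factorization $\theta\circ\kappa=\kappa_1\circ\varphi$ through the restriction isomorphism $\varphi=\theta|_P$ to obtain $[\kappa_1\bar{\kappa_1}]=[\theta][\kappa\bkappa][\theta^{-1}]=[\kappa\bkappa]$, and then concludes $P=Q$ from Theorem \ref{intermediate}(2). Your extra care with the conjugate bookkeeping ($[\bar{\sigma}]=[\sigma^{-1}]$, $\overline{\rho_1\rho_2}=\bar{\rho_2}\bar{\rho_1}$) is sound and merely spells out what the paper leaves implicit.
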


\begin{proof} Let $Q=\theta(P)$, let $\varphi:P\to Q$ be the restriction of $\theta$ to $P$ regarded as an isomorphism from $P$ onto $Q$, 
and let $\kappa_1:Q\hookrightarrow N$ be the inclusion map. 
Then by definition, we have $\theta\circ \kappa=\kappa_1\circ \varphi$. 
Thus 
$$[\kappa_1\bar{\kappa_1}]=[\kappa_1\varphi\bar{\varphi}\bar{\kappa_1}]=[\theta][\kappa\bkappa][\theta^{-1}]=[\kappa\bkappa],$$
and the statement follows from Theorem \ref{intermediate}.  
\end{proof}

%%%%%%%%%%%%%%%%%%%%%%%%%%%%%%%%%%%%%%%%%%%%%%%%%%%%%%%%%%%%%%%%%%%%%%%%%%%%%%%%%%%%%%%%%%%%%%%%%%%%%%%%%%%%%%%%%%%%%%%%%%%%%%%%%%
%%%%%%%%%%%%%%%%%%%%%%%%%%%%%%%%%%%%%%%%%%%%%%%%%%%%%%%%%%%%%%%%%%%%%%%%%%%%%%%%%%%%%%%%%%%%%%%%%%%%%%%%%%%%%%%%%%%%%%%%%%%%%%%%%%
%%%%%%%%%%%%%%%%%%%%%%%%%%%%%%%%%%%%%%%%%%%%%%%%%%%%%%%%%%%%%%%%%%%%%%%%%%%%%%%%%%%%%%%%%%%%%%%%%%%%%%%%%%%%%%%%%%%%%%%%%%%%%%%%%%

\section{Goldman-type theorems for Frobenius groups}
In this section, we establish weak Goldman-type theorems for all Frobenius groups generalizing results obtained in \cite{I97}. 

For a tuple of natural numbers $\bm=(m_0,m_1,\ldots,m_l)$ with $m_0=1$ and $l\geq 1$, and a natural number $n$, 
we assign a bipartite graph $\cG_{\bm,n}$ as follows. 
Let $I=\{0,1,\ldots,l\}$ and let $J$ be an index set with $|J|=n$. 
The set of even vertices is $\{v^0_i\}_{i\in I}\sqcup \{v^2_j\}_{j\in J}$ and the set of odd vertices is $\{v^1_i\}_{i\in I}$.  
The only non-zero entries of the adjacency matrix $\Delta$ of $\cG_{\bm,n}$ are 
$$\Delta(v^0_i,v^1_i)=\Delta(v^1_i,v^0_i)=1,\quad \forall i\in I,$$
$$\Delta(v^{1}_i,v^2_j)=\Delta(v^2_j,v^{1}_i)=m_i,\quad \forall i\in I,\;\forall j\in J.$$
The vertex $v_0^0$ is treated as a distinguished vertex $*$. 

\begin{figure}[H]
\centering
\begin{tikzpicture}
\draw (-2,0)--(-1,0)--(0,0);
\draw (0,0)--(1,1)--(2,2);
\draw (0,0)--(1,0.3)--(2,0.6);
\draw (0,0)--(1,-0.3)--(2,-0.6);
\draw (0,0)--(1,-1)--(2,-2);
\draw(-2,0)node{$*$};
\draw(-1,0)node{$\bullet$};
\draw(0,0)node{$\bullet$};
\draw(1,1)node{$\bullet$};
\draw(1,0.3)node{$\bullet$};
\draw(1,-0.3)node{$\bullet$};
\draw(1,-1)node{$\bullet$};
\draw(2,2)node{$\bullet$};
\draw(2,0.6)node{$\bullet$};
\draw(2,-0.6)node{$\bullet$};
\draw(2,-2)node{$\bullet$};
\draw(0.4,-0.6)node{2};
\end{tikzpicture}
\caption{$\cG_{(1^4,2),1}=\cG_{S(3^2)>Q_8}$}
\end{figure}
We use notation $k^a=\overbrace{k,k,\ldots,k}^{a}$ for short. 
With this convention, the graph $\cG_{m,n}$ considered in \cite{I97} is $\cG_{(1^m),n}$. 
An edge with a number $b$ means a multi-edge with multiplicity $b$. 

Let 
$$m:=\|\bm\|^2=\sum_{i=0}^lm_i^2.$$
Then the Perron-Frobenius eigenvalue of $\Delta$ is $\sqrt{1+mn}$. 
The Perron-Frobenius eigenvector $d$ with normalization $d(v_0^0)=1$ is 
$$d(v_i^0)=m_i,\quad d(v_i^1)=m_i\sqrt{1+mn},\quad d(v_j^2)=m.$$

Let $G=K\rtimes H$ be a Frobenius group with the Frobenius kernel $K$ and a Frobenius complement $H$. 
Then we have $\cG_{G>H}=\cG_{\bm,n}$ where $n$ is the number of $H$-orbits in $K\setminus\{e\}$, 
and $\bm$ is the ranks of the irreducible representations of $H$. 
Therefore we have $|H|=m$ and $|K|=1+mn$. 
If moreover $K$ is abelian, the graph $\cG_{H}^G$ is also $\cG_{\bm,n}$. 

Conversely, we can show the following theorem.

\begin{theorem}\label{GthF} Let $N\supset P$ be a finite index inclusion of factors with $\cG_{N\supset P}=\cG_{\bm,n}$. 
Then there exists a unique subfactor $R\subset P$, up to inner conjugacy, such that $N\cap R'=\C$ and there exists 
a Frobenius group $G=K\rtimes H$ with the Frobenius kernel $K$ and a Frobenius complement $H$ satisfying $|K|=1+mn$, 
$|H|=m$, the tuple $(m_0,m_1,\ldots,m_l)$ being the ranks of the irreducible representations of $H$, and 
$$N=R\rtimes G\supset P=R\rtimes H.$$ 
Moreover, 
\begin{itemize}
\item[$(1)$] If $n=1$, then $1+m$ is a prime power $p^k$ with a prime $p$ and  $K=\Z_p^k$. 
The $G$-action on $G/H$ is sharply 2-transitive. 
The dual principal graph is also $\cG_{\bm,1}$ in this case.  
\item[$(2)$] If $n=2$ or $n=3$, then $1+mn$ is a prime power $p^k$ with a prime $p$, and 
$G$ is a primitive Frobenius group with $K=\Z_p^k$. 
The dual principal graph is also $\cG_{\bm,n}$ in this case. 
\end{itemize}
\end{theorem}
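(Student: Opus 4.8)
The plan is to reconstruct everything from the fusion category $\cC$ of $P$-$P$ bimodules generated by $\biota\iota$, where $\iota\colon P\hookrightarrow N$ is the inclusion. Reading $\cG_{\bm,n}$ off, the index is $[N:P]=1+mn$, the basic bimodule is the unique odd vertex $v_0^1$ adjacent to $*=v_0^0$, and $[\biota\iota]=[\id]\oplus\bigoplus_{j\in J}[v_j^2]$. First I would show that the simple objects $\{v_i^0\}_{i\in I}$ generate a fusion subcategory $\cC_0\subset\cC$ of global dimension $m$: since $d(v_i^0)d(v_{i'}^0)=m_im_{i'}\le\max_k m_k^2\le m-1<m=d(v_j^2)$ (the summand $m_0^2=1$ is always present in $m=\sum_k m_k^2$, and $l\ge1$), no $v_j^2$ can occur in $v_i^0\otimes v_{i'}^0$, so $\cC_0$ is closed under fusion. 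A Frobenius-reciprocity computation then shows that each $v_j^2$ absorbs $\cC_0$, i.e.\ $v_i^0\otimes v_j^2$ is a sum of objects $v_{j'}^2$ with total multiplicity $m_i$ and contains no $v_{i'}^0$; thus $\{v_j^2\}_{j\in J}$ is a $\cC_0$-module category on which the regular object $\lambda=\bigoplus_i m_i v_i^0$ acts.

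The crux, and the step I expect to be the main obstacle, is to identify $\cC_0$ with $\Rep(H)$ for a genuine finite group $H$ of order $m$ with irreducible degrees $m_0,\dots,m_l$, and to recognize $H$ as a Frobenius complement. The principal graph records only induction–reduction data, so a priori $\cC_0$ could be the representation category of a nontrivial quasi-Hopf algebra; some abstract fusion-categorical input is therefore unavoidable, and this is precisely what the classification of the relevant integral fusion categories in Theorem \ref{classification} (generalizing \cite[Corollary 7.4]{EGO04}) is designed to supply. Granting $\cC_0\simeq\Rep(H)$, the regular object $\lambda$ carries a canonical Q-system, and the multiplicity of $\id$ in $\lambda$ is $m_0=1$; because the Schur multiplier $H^2(H,\T)$ is trivial by Lemma \ref{Fro}(2), this Q-system is unique up to isomorphism, so it produces a subfactor $R\subset P$ with $R'\cap P=\C$ and $P=R\rtimes_\gamma H$ for an outer $H$-action $\gamma$, unique up to inner conjugacy in $P$, by the characterization of crossed products \cite{PP86,K89}.

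Next I would promote $\gamma$ to a $G$-action. Writing $\iota_R\colon R\hookrightarrow P$, a dimension count gives $[N:R]=(1+mn)m$, and by decomposing the canonical endomorphism $\biota_R(\id\oplus\bigoplus_j v_j^2)\iota_R$ of $R\subset N$, using $P=R\rtimes H$ together with $d(v_j^2)=m=|H|$, I would show that every $R$-$R$ bimodule arising from $R\subset N$ is an automorphism; equivalently, the principal graph of $R\subset N$ is the depth-$2$ star $\cG_{(1+mn)m}$. The characterization of crossed products then yields $N=R\rtimes_\alpha G$ for a finite group $G\supset H$ with $[G:H]=1+mn=|X|$ and $\alpha|_H=\gamma$, identifying $H$ with the point stabilizer and the $v_j^2$ with the $H$-orbits on $X\setminus\{x_1\}$. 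Since each orbit object has dimension exactly $m=|H|$, every stabilizer $G_{x_1,x_j}$ is trivial, the $H$-action on $X\setminus\{x_1\}$ is free, and hence $G=K\rtimes H$ is a Frobenius group with $|K|=1+mn$ and Frobenius complement $H$ of irreducible degrees $(m_0,\dots,m_l)$; outerness of $\alpha$ forces $N\cap R'=\C$, while uniqueness of $R$ up to inner conjugacy is inherited from that of the Q-system in $\cC_0$.

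Finally, for the refined statements: when $n=1$ there is a single $H$-orbit, so $G$ is sharply $2$-transitive, and the Zassenhaus structure recalled in Subsection 2.2 forces $1+m=|K|=p^k$ with $K=\Z_p^k$. For $n=2$ and $n=3$ I would prove primitivity directly. Any nontrivial proper $H$-invariant subgroup $K_0<K$ is a union of full (size-$m$) orbits together with $e$, so $|K_0|=1+mn_0$ with $0<n_0<n$, and $|K_0|$ divides $|K|=1+mn$; but then $1+mn_0\mid m(n-n_0)$, hence $1+mn_0\mid(n-n_0)$ as $\gcd(1+mn_0,m)=1$, which is impossible since $1+mn_0>n-n_0$ for $n\le3$ and $m\ge2$. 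Thus $G$ is primitive, Lemma \ref{Fro}(1) gives $K=\Z_p^k$ with $1+mn=p^k$, and since $K$ is abelian the dual principal graph coincides with $\cG_H^G=\cG_{\bm,n}$, as observed before the statement.
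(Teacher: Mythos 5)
Your proposal founders at its two central steps, and in both cases the missing content is exactly what the paper's proof actually consists of. First, the identification $\cC_0\simeq \Rep(H)$ is not available as an input: Theorem \ref{classification} is proved \emph{after} Theorem \ref{GthF}, and its proof opens with ``In the same way as in the proofs of Theorem \ref{GthF}, there exist a unique subfactor $R\subset P$\dots'', so citing it here is circular. Even setting the logical order aside, its hypotheses do not match your situation for $n\geq 2$: it assumes a single extra simple $\rho$ with $[\rho^2]=\bigoplus_i m_i[\alpha_i]\oplus (m-1)[\rho]$, whereas for general $n$ you have $n$ objects $\rho_j$ whose mutual fusion is not of this form. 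The paper avoids any abstract recognition of $\cC_0$ altogether: it applies Theorem \ref{intermediate}(3) to the closed subsystem $\{\alpha_i\}$ inside $\rho_j\rho_{\bar{j}}$ to produce intermediate subfactors $P\supset R_j\supset \rho_j(P)$ with $[\kappa_j\overline{\kappa_j}]=\bigoplus_i m_i[\alpha_i]$, and the group $H$ \emph{emerges} from the subfactor rather than being presupposed. Second, your claim that every $R$-$R$ sector arising from $R\subset N$ is an automorphism does not follow from the dimension count you indicate. Granting $P=R\rtimes H$, one can indeed compute $\dim(\bkappa\biota\iota\kappa,\bkappa\biota\iota\kappa)=m+nm^2=[N:R]$; but together with the standard bound that multiplicities in a canonical endomorphism are at most the dimensions, this only forces each simple to appear with multiplicity equal to its dimension, i.e.\ depth $2$ --- which is equally consistent with a crossed product by a genuine Kac algebra. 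Ruling that out, i.e.\ proving that $\overline{\kappa_j}\kappa_k$ decomposes into $1$-dimensional sectors, is the technical heart of the paper (Lemmas \ref{factorization} and \ref{1dim}): it requires the factorizations $[\rho_j]=[\kappa_j\theta_j\overline{\kappa_{\bar{j}}}]$, the intersection constraints Eq.(\ref{intersection1})--(\ref{intersection2}) coming from $\dim(\rho_j,\rho_k)=\delta_{j,k}$, and a delicate case analysis. Your proposal contains no substitute for this argument, and without it neither the group $G$ nor the double-coset count showing $G$ is Frobenius is obtained.

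On the positive side, your opening moves (the fusion subcategory $\cC_0$ via the dimension bound $m_im_{i'}<m$, and the absorption $[\alpha_i\rho_j]=m_i[\rho_j]$ via Frobenius reciprocity) agree with the paper, and your primitivity argument in part (2) is correct and genuinely different: the paper shows $H$ is maximal by excluding intermediate subfactors between $N$ and $P$ through the Jones index restriction (an intermediate $Q$ would have $[N:Q]=(1+2m)/(1+m)<2$, etc.), whereas your purely arithmetic observation --- an $H$-invariant $K_0<K$ has order $1+mn_0$ dividing $1+mn$, forcing $1+mn_0\mid n-n_0$, impossible for $n\leq 3$ and $m\geq 2$ --- is shorter and needs no subfactor input. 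But this refinement sits downstream of the main construction, which your proposal does not secure.
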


We prove the theorem in several steps.  
Let 
$\iota:P\hookrightarrow N$ be the inclusion map. 
We denote by $\alpha_i$ the irreducible endomorphism of $N$ corresponding to $v_i^0$, and by 
$\rho_j$ the ones corresponding to $v_j^2$.  
Then $\iota\circ \alpha_i$ corresponds to $v_i^1$. 
From the graph $\cG_{\bm,n}$, we get the following fusion rules:
$$[\biota][\iota]=[\id]\oplus \bigoplus_{j\in J}[\rho_j],$$
$$[\iota][\rho_j]=\bigoplus_{i\in I}m_i[\iota\alpha_i],$$   
$$[\biota][\iota\alpha_i]=[\alpha_i]+m_i\bigoplus_{j\in J}[\rho_j],$$
$$d(\alpha_i)=m_i,\quad d(\iota)=\sqrt{1+mn},\quad d(\rho_j)=m.$$
\begin{figure}[H]
\centering
\begin{tikzpicture}
\draw (-1,2)--(-1,1)--(-0.5,0)--(0,1)--(0.5,0)--(1,1)--(-0.5,0);
\draw (-1,1)--(0.5,0);
\draw (0,2)--(0,1);
\draw (1,2)--(1,1);
\draw(-1,2)node{$*$};
\draw(-1,1)node{$\bullet$};
\draw(-0.5,0)node{$\bullet$};
\draw(0,2)node{$\bullet$};
\draw(0,1)node{$\bullet$};
\draw(0.5,0)node{$\bullet$};
\draw(1,1)node{$\bullet$};
\draw(1,2)node{$\bullet$};
\draw(-1,2)node[left]{$\id_P$};
\draw(0,2)node[left]{$\alpha_1$};
\draw(1,2)node[right]{$\alpha_2$};
\draw(-1,1)node[left]{$\iota$};
\draw(0,1)node[left]{$\iota\alpha_1$};
\draw(1,1)node[right]{$\iota\alpha_2$};
\draw(-0.5,0)node[below]{$\rho_1$};
\draw(0.5,0)node[below]{$\rho_2$};
\end{tikzpicture}
\caption{$\cG_{(1^3),2}=\cG_{\Z_7\rtimes \Z_3>\Z_3}$}
\end{figure}Let $\cC$ be the fusion category generated by $\biota\iota$. 
Then since $d(\alpha_{i_1}\alpha_{i_2})$ is smaller than $m=d(\rho_j)$, 
we have a fusion subcategory $\cC_0$ with the set (of equivalence classes) of simple objects 
$\Irr(\cC_0)=\{\alpha_i\}_{i\in I}$. 

We introduce involutions of $I$ and $J$ by $[\overline{\alpha_i}]=[\alpha_{\bar{i}}]$ and 
$[\overline{\rho_j}]=[\rho_{\overline{j}}]$. 
Note that $\rho_j\rho_{\bar{j}}$ contains $\alpha_i$ at most $d(\alpha_i)=m_i$ times (see \cite[p.39]{ILP98}). 
Since it contains $\id$, dimension counting shows that it contains $\alpha_i$ with full multiplicity $m_i$. 
Thus the Frobenius reciprocity implies 
$$[\alpha_i\rho_j]=m_i[\rho_j].$$

\begin{lemma}\label{factorization} Let the notation be as above. 
There exist a unique intermediate subfactor $P\supset R_j\supset \rho_j(P)$ and an isomorphism 
$\theta_j:R_{\bar{j}}\to R_j$ for each $j\in J$ 
such that if $\kappa_j:R_j \hookrightarrow P$ is the inclusion map, 
$$[\rho_j]=[\kappa_j\theta_j\overline{\kappa_{\bar{j}}}],$$ 
$$[\kappa_j\overline{\kappa_j}]=\bigoplus_{i\in I}m_i[\alpha_i].$$
Moreover $P\supset R_j$ is a depth 2 inclusion of index $m$.  
\end{lemma}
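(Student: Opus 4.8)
The statement splits into two tasks: producing the intermediate subfactors $R_j$ with the asserted canonical endomorphism (together with the ``moreover'' book-keeping), and then establishing the factorization $[\rho_j]=[\kappa_j\theta_j\overline{\kappa_{\bar j}}]$.

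The plan is to produce $R_j$ entirely from the intermediate-subfactor machinery. From the given $[\alpha_i\rho_j]=m_i[\rho_j]$ and Frobenius reciprocity one gets $\dim(\alpha_i,\rho_j\overline{\rho_j})=m_i$, so the canonical endomorphism $\rho_j\overline{\rho_j}$ of the inclusion $\rho_j(P)\subset P$ contains $\bigoplus_{i\in I}m_i[\alpha_i]$. The set $\Lambda_1=\{\alpha_i\}_{i\in I}$ is self-conjugate (as $[\overline{\alpha_i}]=[\alpha_{\bar i}]$) and is fusion-closed inside the irreducibles of $\rho_j\overline{\rho_j}$, because $\{\alpha_i\}$ are exactly the simple objects of the fusion subcategory $\cC_0$. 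Theorem \ref{intermediate}(3) applied to $\rho_j(P)\subset P$ with this $\Lambda_1$ then yields a unique intermediate $P\supset R_j\supset\rho_j(P)$ whose inclusion $\kappa_j$ satisfies $[\kappa_j\overline{\kappa_j}]=\bigoplus_i m_i[\alpha_i]$. The remaining assertions are immediate: $[P:R_j]=d(\kappa_j)^2=\sum_i m_i^2=m$; the multiplicity of $\id$ in $[\kappa_j\overline{\kappa_j}]$ is $m_0=1$, so $R_j\subset P$ is irreducible; and since every simple of $\cC_0$ already occurs in $[\kappa_j\overline{\kappa_j}]$ and $\cC_0$ is closed under fusion, the inclusion is depth $2$.

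For the factorization, since $\rho_j(P)\subseteq R_j$ the endomorphism $\rho_j$ corestricts to a sector $\nu_j\colon P\to R_j$ with $[\rho_j]=[\kappa_j\nu_j]$; from $1=\dim(\rho_j,\rho_j)=\dim(\overline{\kappa_j}\kappa_j\nu_j,\nu_j)\ge\dim(\nu_j,\nu_j)\ge1$ and $\id_{R_j}\prec\overline{\kappa_j}\kappa_j$, the sector $\nu_j$ is irreducible of dimension $\sqrt m$. It now suffices to produce an isomorphism $\theta_j\colon R_{\bar j}\to R_j$ with $[\rho_j]=[\kappa_j\theta_j\overline{\kappa_{\bar j}}]$; since $d(\kappa_j\theta_j\overline{\kappa_{\bar j}})=m=d(\rho_j)$ and $\rho_j$ is irreducible, and since $\dim(\rho_j,\kappa_j\theta_j\overline{\kappa_{\bar j}})=\dim(\theta_j,\overline{\kappa_j}\rho_j\kappa_{\bar j})$, this amounts to finding an invertible summand $\theta_j$ of the $R_j$-$R_{\bar j}$ sector $\overline{\kappa_j}\rho_j\kappa_{\bar j}$. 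I would instead phrase this through dual canonical endomorphisms: the sector $\overline{\kappa_{\bar j}}\colon P\to R_{\bar j}$ is irreducible of dimension $\sqrt m$ with dual canonical endomorphism $[\kappa_{\bar j}\overline{\kappa_{\bar j}}]=\gamma$, where $\gamma:=\bigoplus_i m_i[\alpha_i]$ is the regular object of $\cC_0$. If one shows $[\overline{\nu_j}\nu_j]=\gamma$, then $\nu_j$ and $\overline{\kappa_{\bar j}}$ are two irreducible sectors with the same dual canonical endomorphism, so they differ by an isomorphism of their targets, $[\nu_j]=[\theta_j\overline{\kappa_{\bar j}}]$, and composing with $\kappa_j$ gives $[\rho_j]=[\kappa_j\theta_j\overline{\kappa_{\bar j}}]$; uniqueness of $R_j$ is already built into Theorem \ref{intermediate}(3).

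The step I expect to be the main obstacle is the equality $[\overline{\nu_j}\nu_j]=\gamma$. Frobenius reciprocity together with $[\rho_j\alpha_i]=m_i[\rho_j]$ gives only $\dim(\alpha_i,\overline{\nu_j}\nu_j)\le m_i$, and the reverse inequality—equivalently, the absence of any simple summand of $\overline{\nu_j}\nu_j$ lying outside $\cC_0$ (such as some $\rho_l$)—cannot be settled from the displayed fusion rules alone, since a priori components of $\rho_{\bar j}\rho_j$ other than the $\alpha_i$ could survive. This is where a genuine rigidity input is required. Moreover, once $[\overline{\nu_j}\nu_j]=\gamma$ is established, the passage from ``same dual canonical endomorphism'' to ``differ by a target isomorphism'' rests on the uniqueness, up to equivalence, of the $Q$-system carried by the regular object $\gamma$; the only potential obstruction is a non-trivial twisting class, which is excluded precisely because the Frobenius complement $H$ has trivial Schur multiplier $H^2(H,\T)=0$ (Lemma \ref{Fro}(2)). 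I therefore expect this to be the delicate part of the argument, and the one place where the Frobenius hypothesis enters beyond the combinatorics of the graph $\cG_{\bm,n}$.
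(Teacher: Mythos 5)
Your first half coincides with the paper's argument: Theorem \ref{intermediate}(3) applied to $P\supset\rho_j(P)$ with $\Lambda_1=\{\alpha_i\}_{i\in I}$ produces $R_j$ with $[\kappa_j\overline{\kappa_j}]=\bigoplus_i m_i[\alpha_i]$, and the index, irreducibility and depth~$2$ statements follow as you say (the paper phrases depth~$2$ via $[\alpha_i\kappa_j]=m_i[\kappa_j]$, hence $[\kappa_j\overline{\kappa_j}\kappa_j]=m[\kappa_j]$, which is equivalent to your closure argument). The trouble begins where you declare the equality $[\overline{\nu_j}\nu_j]=\bigoplus_i m_i[\alpha_i]$ to be an obstacle that ``cannot be settled from the displayed fusion rules alone'' and to require ``genuine rigidity input.'' It can be settled, in one line, and the paper does exactly this. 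Since $d(\nu_j)=\sqrt m$, the sector $\overline{\nu_j}\nu_j$ has dimension exactly $m$; it contains $\id$ (because $\nu_j$ is irreducible) and, since $\id\prec\overline{\kappa_j}\kappa_j$, it is contained in $\overline{\nu_j}\,\overline{\kappa_j}\kappa_j\nu_j=\rho_{\bar j}\rho_j$, whose irreducible constituents all lie in $\{\alpha_i\}_{i\in I}\sqcup\{\rho_l\}_{l\in J}$ because the even vertices of $\cG_{\bm,n}$ exhaust the irreducibles of $\cC$. Any $\rho_l$ has dimension $m$, so $1+m>m$ excludes every $\rho_l$; and the multiplicity $c_i$ of $\alpha_i$ is at most $d(\alpha_i)=m_i$ by the bound from \cite[p.39]{ILP98} already invoked before the lemma, so $\sum_i c_im_i=m$ forces $c_i=m_i$ for all $i$. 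No rigidity enters.

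The second and more serious gap is your proposed finish. You want to pass from ``same dual canonical endomorphism'' to $[\nu_j]=[\theta_j\overline{\kappa_{\bar j}}]$ via uniqueness of the $Q$-system carried by the regular object of $\cC_0$, excluding a twisting class by $H^2(H,\T)=0$ for a Frobenius complement $H$ (Lemma \ref{Fro}(2)). But at this point of the argument $\cC_0$ is an abstract fusion category: that it is equivalent to $\Rep(H)$ for a Frobenius complement $H$ is an \emph{output} of Theorem \ref{GthF} — and of Theorem \ref{classification}, whose proof runs Lemma \ref{factorization} on an a priori arbitrary $\cC_0$ — so invoking the Schur multiplier of $H$ here is circular. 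The paper needs no $Q$-system uniqueness at all. Writing $\rho_j=\kappa_j\sigma_j$ (your $\nu_j$ is $\sigma_j$) and perturbing $\overline{\sigma_j}$ by an inner automorphism so that $\rho_{\bar j}=\overline{\sigma_j}\circ\overline{\kappa_j}$ holds exactly, one gets $\rho_{\bar j}(P)=\overline{\sigma_j}(\overline{\kappa_j}(P))\subset\overline{\sigma_j}(R_j)$: the algebra $\overline{\sigma_j}(R_j)$ is an intermediate subfactor of the \emph{same} inclusion $P\supset\rho_{\bar j}(P)$ as $R_{\bar j}$, with canonical endomorphism $[\overline{\sigma_j}\sigma_j]=\bigoplus_i m_i[\alpha_i]$ by the dimension count above. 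Since each $\alpha_i$ occurs in $\rho_{\bar j}\rho_j$ with its full multiplicity $m_i$, Theorem \ref{intermediate}(2) gives the literal equality $\overline{\sigma_j}(R_j)=R_{\bar j}$, and $\theta_j:=\overline{\sigma_j}^{\,-1}:R_{\bar j}\to R_j$ yields $[\rho_j]=[\kappa_j\theta_j\overline{\kappa_{\bar j}}]$. So you correctly isolate the two statements that must be proved, but you prove neither: the first is mislabeled as intractable when it is elementary dimension counting, and the sketch offered for the second rests on structure not yet available.
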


\begin{proof}
Theorem \ref{intermediate} shows that there exists a unique intermediate subfactor $P\supset R_j\supset \rho_j(P)$ 
such that if $\kappa_j:R_j\hookrightarrow P$ is the inclusion map, we have 
$$[\kappa_j\overline{\kappa_j}]=\bigoplus_{i\in I} m_i[\alpha_i].$$
Since $m_i=d(\alpha_i)$, Frobenius reciprocity implies 
$$[\alpha_i][\kappa_i]=m_i[\kappa_i],$$
and $P\supset R_j$ is a depth 2 inclusion of index $m$. 

Let $\sigma_j$ be $\rho_j$ regarded as a map from $P$ to $R_j$.  
By definition, we have  $\rho_j=\kappa_j\circ \sigma_j$, and since $d(\rho_j)=m$ and $d(\kappa_j)=\sqrt{m}$, 
we get $d(\sigma_j)=\sqrt{m}$. 
Taking conjugate, we get $[\rho_{\bar{j}}]=[\overline{\sigma_j}][\overline{\kappa_j}]$. 
Perturbing $\overline{\sigma_j}$ by an inner automorphism if necessary, we may and do assume 
$\rho_{\bar{j}}=\overline{\sigma_j}\circ \overline{\kappa_j}$.  
Since $[\overline{\sigma_j}\sigma_j]$ contains $\id$ and is contained in $\rho_{\bar{j}}\rho_j$, 
dimension counting shows 
$$[\overline{\sigma_j}\sigma_j]=\bigoplus_{i\in  I}m_i[\alpha_i],$$
and Theorem \ref{intermediate} implies $\overline{\sigma_j}(R_j)=R_{\bar{j}}$. 
Let $\theta_j$ be the inverse of $\overline{\sigma_j}$, which is an isomorphism from $R_{\bar{j}}$ onto $R_j$. 
Then we get $\rho_{\bar{j}}=\kappa_{\bar{j}}\circ \theta_j^{-1}\circ \overline{\kappa_j}$, and 
$$[\rho_j]=[\kappa_j\theta_j\overline{\kappa_{\bar{j}}}].$$ 
\end{proof}

\begin{lemma} \label{1dim}
With the above notation $\bar{\kappa}_j\kappa_k$ is decomposed into 1-dimensional sectors for all $j,k\in J$. 
\end{lemma}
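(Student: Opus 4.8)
The plan is to first pin down the precise shape of each sector $[\overline{\kappa_j}\kappa_k]$ by a fusion computation, and then to establish pointedness through two inputs of different character: a Kac-algebra rigidity statement for the diagonal terms $j=k$, and a central-type obstruction that the Frobenius structure forbids for the off-diagonal terms. To begin, I would combine $[\kappa_k\overline{\kappa_k}]=\bigoplus_{i\in I}m_i[\alpha_i]$ with the relation $[\alpha_i\kappa_k]=m_i[\kappa_k]$ recorded in the proof of Lemma \ref{factorization} to get $[\kappa_j\overline{\kappa_j}\kappa_k]=m[\kappa_k]$. Writing $[\overline{\kappa_j}\kappa_k]=\bigoplus_\xi n_\xi[\xi]$, applying $\kappa_j$ on the left and using Frobenius reciprocity forces $[\kappa_j\xi]=d(\xi)[\kappa_k]$ and hence $n_\xi=\dim(\kappa_j\xi,\kappa_k)=d(\xi)$; so $[\overline{\kappa_j}\kappa_k]=\bigoplus_\xi d(\xi)[\xi]$ with $\sum_\xi d(\xi)^2=m$. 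This reduces the lemma to showing $d(\xi)=1$ for every irreducible $R_j$-$R_k$ sector $\xi$ occurring in $\overline{\kappa_j}\kappa_k$.

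For the diagonal case $j=k$, the inclusion $R_j\subset P$ is irreducible and of depth $2$ with index $m$ by Lemma \ref{factorization}, so it yields a Kac algebra $\mathbb{G}$ of dimension $m$ whose representation category is the $P$-$P$ system $\cC_0$, carrying the fusion rules of $\Rep(H)$. Here $\overline{\kappa_j}\kappa_j$ is the dual $R_j$-$R_j$ system, and it is pointed precisely when $\mathbb{G}$ is cocommutative, i.e.\ $\mathbb{G}=\C[H]$. I would deduce this from \cite{IK02} (the results already quoted for Lemma \ref{inv2}): a non-cocommutative Kac algebra with these fusion rules would arise as an abelian extension governed by a non-degenerate $H$-invariant class $\omega\in H^2(\widehat{K},\T)$ attached to an abelian normal subgroup $K\triangleleft H$, and such classes cannot exist because every abelian subgroup of $H$ is cyclic (Lemma \ref{Fro}(3)) and hence has trivial Schur multiplier. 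Thus $P=R_j\rtimes_\beta H$ genuinely, and $[\overline{\kappa_j}\kappa_j]=\bigoplus_{h\in H}[\beta_h]$ is a sum of automorphisms forming a group isomorphic to $H$.

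With the diagonal settled I would handle a general $\xi$ by a central-type argument. Suppose $d(\xi)>1$. Since $\xi\subset\overline{\kappa_j}\kappa_k$, the object $\xi\overline{\xi}$ is contained in $[\overline{\kappa_j}\kappa_k\overline{\kappa_k}\kappa_j]=m[\overline{\kappa_j}\kappa_j]$, which is pointed by the previous step. As $\xi$ is irreducible, the invertibles $g$ with $g\xi\cong\xi$ form a subgroup $S$, necessarily sitting inside $\{[\beta_h]\}\cong H$, with $\xi\overline{\xi}=\bigoplus_{g\in S}[g]$ and $|S|=d(\xi)^2$. Then $\xi$ realizes the unique irreducible projective representation of $S$ of dimension $\sqrt{|S|}$, whose cocycle must be non-degenerate; consequently $H^2(S,\T)\neq 0$, so some Sylow subgroup of $S$ is neither cyclic nor generalized quaternion and therefore contains a copy of $\Z_p\times\Z_p$. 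This is an abelian non-cyclic subgroup of $H$, contradicting Lemma \ref{Fro}(3). Hence $d(\xi)=1$ for every constituent, which proves the statement for all $j,k$.

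I would expect the true obstacle to be the diagonal step: the fusion rules alone do \emph{not} force $\mathbb{G}=\C[H]$ --- for example the Kac--Paljutkin algebra shares the fusion rules of $\Rep(Q_8)$, and $Q_8$ is a perfectly legitimate Frobenius complement --- so the cohomological rigidity of \cite{IK02}, fed by Lemma \ref{Fro}, is genuinely indispensable and is not replaceable by a purely combinatorial argument. The off-diagonal step, by contrast, becomes elementary once the diagonal case is in hand.
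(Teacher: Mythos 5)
Your opening reduction is correct: from $[\alpha_i\kappa_k]=m_i[\kappa_k]$ one gets $[\kappa_j\overline{\kappa_j}\kappa_k]=m[\kappa_k]$, hence $n_\xi=d(\xi)$ for every constituent and $\sum_\xi d(\xi)^2=m$, which matches the setting of the paper. The gap is in your diagonal step, and it is a circularity. At the point where Lemma \ref{1dim} is proved, no group $H$ exists: the hypothesis is only that $\cG_{N\supset P}=\cG_{\bm,n}$, the even system $\cC_0=\{\alpha_i\}_{i\in I}$ is an \emph{unknown} fusion category whose simple objects merely have dimensions $m_i$, and the statement that the depth-2 Kac algebra of $R_j\subset P$ is a group algebra $\C[H]$ with $H$ a Frobenius complement is exactly what Lemma \ref{1dim}, together with the subsequent double-coset count in the proof of Theorem \ref{GthF}, is designed to produce. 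You invoke Lemma \ref{Fro}(3) (every abelian subgroup of $H$ is cyclic) and triviality of Schur multipliers twice --- once to force cocommutativity, once to kill central-type constituents off the diagonal --- but these properties hold because $H$ is a Frobenius complement, and that is established only \emph{after} the crossed-product structure $N=R\rtimes_\gamma G$ and the double-coset computation, i.e., after the lemma. Moreover, your appeal to \cite{IK02} misreads what is quoted in Lemma \ref{inv2}: that result is a lifting criterion for automorphisms of an already-given crossed product $R\rtimes_\alpha H$, not a classification asserting that every non-cocommutative Kac algebra with these fusion rules is an abelian extension governed by a nondegenerate invariant class; as stated, that classification claim is unproven. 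Your own Kac--Paljutkin example (note $\bm=(1^4,2)$ is literally the $S(3^2)>Q_8$ graph data of Figure 2) shows the fusion data of $\cC_0$ cannot decide cocommutativity, so the diagonal case genuinely needs input your argument never supplies.

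That missing input is what the paper's proof runs on: the constraints binding the different $j\in J$ together, namely $\dim(\rho_j,\rho_k)=\delta_{jk}$ combined with the factorizations $[\rho_j]=[\kappa_j\theta_j\overline{\kappa_{\bar{j}}}]$ from Lemma \ref{factorization}, which yield Eq.(\ref{intersection1}) and Eq.(\ref{intersection2}). The paper then argues combinatorially: a constituent $\xi_{jk}^a$ with $d(\xi_{jk}^a)>1$ forces, via containment of $\kappa_{\bar{j}}\theta_j^{-1}\xi_{jk}^a\theta_k\overline{\kappa_{\bar{k}}}$ in $\rho_{\bar{j}}\rho_k$, identities of the form $[\theta_j^{-1}\xi_{jk}^a\theta_k]=[\xi_{\bar{j}l}^b\theta_l\xi_{\bar{l}\bar{k}}^c]$, and iterating this eventually contradicts Eq.(\ref{intersection1}); in particular the diagonal case $j=k$ is handled by the same mechanism, not in isolation. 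It is precisely these intersection relations that exclude a Kac--Paljutkin-type dual even though the data of $R_j\subset P$ alone permits one. So your architecture --- settle $j=k$ abstractly by Kac-algebra rigidity, then reduce $j\neq k$ to it --- fails already at the first stage; a correct proof must use the $\rho_j$'s and $\theta_j$'s, as the paper does.
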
 

\begin{proof} 
Let 
$$[\overline{\kappa_j}\kappa_k]=\bigoplus_{a\in \Lambda_{j,k}}n_{jk}^a[\xi_{jk}^a]$$
be the irreducible decomposition. 
Since 
$$[\overline{\kappa_j}\kappa_k\overline{\kappa_k}\kappa_l]=\bigoplus_{i\in I}m_i[\overline{\kappa_j}\alpha_i\kappa_l]
=\bigoplus_{i\in I}m_i^2[\overline{\kappa_j}\kappa_l]
=m[\overline{\kappa_j}\kappa_l],$$
the product $\xi_{jk}^a\xi_{kl}^b$ is a direct sum of irreducibles from $\{\xi_{jl}^c\}_{c\in \Lambda_{j,l}}$. 
Since $[\overline{\overline{\kappa_k}\kappa_{j}}]=[\overline{\kappa_j}\kappa_k]$, we can arrange the index sets so that 
for any $a\in \Lambda_{j,k}$ there exists $\overline{a}\in \Lambda_{k,j}$ satisfying $[\overline{\xi_{jk}^a}]=[\xi_{kj}^{\overline{a}}]$. 

Since 
$$\delta_{j,k}=\dim(\rho_j,\rho_k)
=\dim (\overline{\kappa_{\overline{j}}}\kappa_{\overline{k}},\theta_j^{-1}\overline{\kappa_j}\kappa_k\theta_k),$$
we have 
\begin{equation}\label{intersection1}
\{[\theta_j^{-1}][\xi_{jj}^a][\theta_j]\}_{a\in \Lambda_{j,j}}\cap \{[\xi_{\bar{j}\bar{j}}^b]\}_{b\in \Lambda_{\bar{j},\bar{j}}}
=[\id],\end{equation}
and for $j\neq k$, 
\begin{equation}\label{intersection2}
\{[\theta_j^{-1}][\xi_{jk}^a][\theta_k]\}_{a\in \Lambda_{j,k}}\cap \{[\xi_{\bar{j}\bar{k}}^b]\}_{b\in \Lambda_{\bar{j},\bar{k}}}
=\emptyset.\end{equation}

Assume we have $\xi_{jk}^a$ with $d(\xi_{jk}^a)>1$. 
Since $\kappa_{\bar{j}}\theta_j^{-1}\xi_{jk}^a\theta_k\overline{\kappa_{\bar{k}}}$ is contained in $\rho_{\bar{j}}\rho_{k}$, 
the former contains either $\alpha_i$ with $i\in I$ or $\rho_l$ with $l\in J$.  
The first case never occurs because 
$$\dim (\kappa_{\bar{j}}\theta_j^{-1}\xi_{jk}^a\theta_k\overline{\kappa_{\bar{k}}},\alpha_i)
=\dim (\theta_j^{-1}\xi_{jk}^a\theta_k,\overline{\kappa_{\bar{j}}}\alpha_i\kappa_{\bar{k}})=
m_i\dim (\theta_j^{-1}\xi_{jk}^a\theta_k,\overline{\kappa_{\bar{j}}}\kappa_{\bar{k}})=0.$$
Thus 
$$0\neq  \dim(\kappa_{\bar{j}}\theta_j^{-1}\xi_{jk}^a\theta_k\overline{\kappa_{\bar{k}}},\rho_l)
=\dim(\theta_j^{-1}\xi_{jk}^a\theta_k,\overline{\kappa_{\bar{j}}}\kappa_l\theta_l\overline{\kappa_{\bar{l}}}\kappa_{\bar{k}}),$$
and there exist $\xi_{\bar{j}l}^b$ and $\xi_{\bar{l}\bar{k}}^c$ such that 
$\theta_j^{-1}\xi_{jk}^a\theta_k$ is contained in 
$\xi_{\bar{j}l}^b\theta_l\xi_{\bar{l}\bar{k}}^c$. 
In fact, the latter is irreducible because of  
$$\dim(\xi_{\bar{j}l}^b\theta_l\xi_{\bar{l}\; \bar{k}}^c,\xi_{\bar{j}l}^b\theta_l\xi_{\bar{l} \;\bar{k}}^c)
=(\theta_l^{-1}\xi_{l\bar{j}}^{\bar{b}}\xi_{\bar{j}l}^b\theta_l,\xi_{\bar{l} \;\bar{k}}^c\xi_{\bar{k} \bar{l}}^{\bar{c}}),$$
and Eq.(\ref{intersection1}). 
Therefore we get 
\begin{equation}\label{txt=xtx}
[\theta_j^{-1}\xi_{jk}^a\theta_k]=[\xi_{\bar{j}l}^b\theta_l\xi_{\bar{l}\;\bar{k}}^c].
\end{equation} 

Since $d(\xi_{jk}^a)>1$, we have either $d(\xi_{\bar{j}l}^b)>1$ or $d(\xi_{\bar{l}\;\bar{k}}^c)>1$. 
We first assume $d(\xi_{\bar{l}\;\bar{k}}^c)>1$. 
We have $[\xi_{jk}^a\theta_k]=[\theta_j\xi_{\bar{j}l}^b\theta_l\xi_{\bar{l}\;\bar{k}}^c]$.
Since $\kappa_j\theta_j\xi_{\bar{j}l}^b\theta_l\overline{\kappa_{\bar{l}}}$ is contained in $\rho_j\rho_l$, 
the former contains either $\alpha_i$ with $i\in I$ or $\rho_{r}$ with $r\in J$. 
In the first case, we have 
$$0\neq \dim(\kappa_j\theta_j\xi_{\bar{j}l}^b\theta_l\overline{\kappa_{\bar{l}}},\alpha_i)
=\dim(\theta_j\xi_{\bar{j}l}^b\theta_l,\overline{\kappa_j}\alpha_i\kappa_{\bar{l}})
=m_i\dim(\theta_j\xi_{\bar{j}l}^b\theta_l,\overline{\kappa_j}\kappa_{\bar{l}}),$$
and there exists $\xi_{j\bar{l}}^d$ satisfying $[\theta_j\xi_{\bar{j}l}^b\theta_l]=[\xi_{j\bar{l}}^d]$, and 
$[\xi_{jk}^a\theta_k]=[\xi_{j\bar{l}}^d\xi_{\bar{l}\bar{k}}^c]$. 
By the Frobenius reciprocity, there exists $\xi_{k\bar{k}}^e$ satisfying $[\theta_k]=[\xi_{k\bar{k}}^e]$.
Since 
$$[\kappa_k\overline{\kappa_k}\kappa_{\bar{k}}]=\bigoplus _{i_1\in I}m_{i_1}[\alpha_g\kappa_{\bar{k}}]=m[\kappa_{\bar{k}}],$$
we get $[\kappa_{k}\xi_{k\bar{k}}^e]=[\kappa_{\bar{k}}]$, 
and 
$$[\rho_k]=[\kappa_k\theta_k\overline{\kappa_{\bar{k}}}]=[\kappa_k\xi_{k\bar{k}}^e\overline{\kappa_{\bar{k}}}]=
[\kappa_{\bar{k}}\overline{\kappa_{\bar{k}}}]=\bigoplus_{i_1\in I}[\alpha_{i_1}],$$
which is contradiction. 
Thus we are left with 
$$0\neq \dim(\kappa_j\theta_j\xi_{\bar{j}l}^b\theta_l\overline{\kappa_{\bar{l}}},\rho_r)=
\dim(\theta_j\xi_{\bar{j}l}^b\theta_l,\overline{\kappa_j}\kappa_r\theta_r\overline{\kappa_{\bar{r}}}\kappa_{\bar{l}}),$$
which shows that there exist $\xi_{jr}^e$ and $\xi_{\bar{r}\bar{l}}^f$ satisfying 
$$\dim(\theta_j\xi_{\bar{j}l}^b\theta_l,\xi_{jr}^e\theta_r\xi_{\bar{r}\bar{l}}^f)\neq 0.$$
As before, the right-hand side is irreducible, and we get 
$[\theta_j\xi_{\bar{j}l}^b\theta_l]=[\xi_{jr}^e\theta_r\xi_{\bar{r}\bar{l}}^f]$, and 
$[\xi_{jk}^a\theta_k]=[\xi_{jr}^e\theta_r\xi_{\bar{r}\bar{l}}^f\xi_{\bar{l}\bar{k}}^c]$. 
Since the left-hand side is irreducible, so is $\xi_{\bar{r}\bar{l}}^f\xi_{\bar{l}\bar{k}}^c$, and there exists 
$\xi_{\bar{r}\bar{k}}^s$ satisfying $[\xi_{\bar{r}\bar{l}}^f\xi_{\bar{l}\;\bar{k}}^c]=[\xi_{\bar{r}\bar{k}}^s]$, 
and $[\xi_{jk}^a\theta_k]=[\xi_{jr}^e\theta_r\xi_{\bar{r}\;\bar{k}}^s]$. 
Note that we have $d(\xi_{\bar{r}\bar{k}}^s)>1$. 
By the Frobenius reciprocity,
$$1=\dim(\xi_{jk}^a\theta_k,\xi_{jr}^e\theta_r\xi_{\bar{r}\bar{k}}^s)
=\dim(\theta_r^{-1}\xi_{rj}^{\bar{e}}\xi_{jk}^a\theta_k,\xi_{\bar{r}\bar{k}}^s),$$
and there exists $\xi_{rk}^t$ satisfying $[\theta_r^{-1}\xi_{rk}^t\theta_k]=[\xi_{\bar{r}\bar{k}}^s]$, 
which contradicts Eq.(\ref{intersection1}). 

Now the only possibility is $d(\xi_{\bar{j}l}^b)>1$. 
Taking conjugate of Eq.(\ref{txt=xtx}), we get 
$[\theta_k^{-1}\xi_{kj}^{\bar{a}}\theta_j]=[\xi_{\bar{k}\bar{l}}^{\bar{c}}\theta_l^{-1}\xi_{l\bar{j}}^{\bar{b}}]$, 
and $[\xi_{kj}^{\bar{a}}\theta_j]=[\theta_k\xi_{\bar{k}\bar{l}}^{\bar{c}}\theta_l^{-1}\xi_{l\bar{j}}^{\bar{b}}]$. 
Since $\kappa_k\theta_k\xi_{\bar{k}\bar{l}}^{\bar{c}}\theta_l^{-1}\overline{\kappa_l}$ is contained in $\rho_k\rho_{\bar{l}}$, 
a similar argument as above works, and we get contradiction again. 
Therefore $d(\xi_{jk}^a)=1$ for all $j,k,a$. 
\end{proof}

\begin{proof}[Proof of Theorem \ref{GthF}] 
We fix $j_0\in J$. 
Since $\overline{\kappa_{j_0}}\kappa_k$ contains an isomorphism $\varphi_j:R_j\to R_{j_0}$, 
by the Frobenius reciprocity, we get $[\kappa_j]=[\kappa_{j_0}\varphi_j]$. 
Thus there exists a unitary $u_j\in P$ satisfying $\Ad u_j\circ \kappa_j=\kappa_{j_0}\circ \varphi_j$, 
which means that for every $x\in R_j$, 
$$u_jxu_j^*=\varphi_j(x).$$
This implies $u_jR_ju_j^*=R_{j_0}.$
By replacing $\rho_j$ with $\Ad u_j\circ \rho_j$ if necessary, we may assume $R_j=R_{j_0}$ for all $j\in J$. 
We denote $R=R_{j_0}$ and $\kappa=\kappa_{j_0}$ for simplicity. 
Now we have $\theta_j\in \Aut(R)$ and $[\rho_j]=[\kappa\theta_j\overline{\kappa}]$. 

Since $\overline{\kappa}\kappa$ is decomposed into 1-dimensional sectors, 
the inclusion $P\supset R$ is a crossed product by a finite group of order $m$, say $H$, 
and there exists an outer action $\beta$ of $H$ on $R$ such that $P=R\rtimes_\beta H$, and 
$$[\overline{\kappa}\kappa]=\bigoplus_{h\in H}[\beta_h].$$

Note that $N\supset R$ is irreducible because 
$$\dim(\iota\kappa,\iota\kappa)=\dim(\biota\iota,\kappa\overline{\kappa})=1.$$
Now we have 
$$[\overline{(\iota\kappa)}\iota\kappa]=[\bkappa\biota\iota\kappa]=[\bkappa\kappa]\oplus \bigoplus_{j\in J}[\bkappa\rho_j\kappa]
=\bigoplus_{h\in H}[\beta_h]\oplus \bigoplus_{j\in J,\;h_1,h_2\in H}
[\beta_{h_1}\theta_j\beta_{h_2}].$$
This shows that there exists a finite group $G$ including $H$, and its outer action $\gamma$ on $R$ 
extending $\beta$ satisfying $N=R\rtimes _\gamma G$. 
Moreover, 
$$\bigoplus_{g\in G}[\gamma_g]=\bigoplus_{h\in H}[\beta_h]\oplus 
\bigoplus_{j\in J,\;h_1,h_2\in H}[\beta_{h_1}\theta_j\beta_{h_2}],$$
holds, which shows that every $(H,H)$-double coset except for $H$ has 
size $|H|^2$. 
Therefore $G$ is a Frobenius group with a Frobenius complement $H$, and it is of the form $K\rtimes H$ 
with the Frobenius kernel $K$. 
Since $|K|=[N:P]$, we get $|K|=1+mn$. 

When $n=1$, we have $|K|=|H|+1$, and $G$ acting on $G/H$ is a sharply 2-transitive permutation group. 

For (2), it suffices to show that $H$ is maximal in $G$.  
For this, it suffices to show that there is no non-trivial intermediate subfactor between $N$ and $P$. 
Assume $n=2$ first. 
Suppose $Q$ is a non-trivial intermediate subfactor and let $\iota_1:P\hookrightarrow Q$ be the inclusion map. 
Since $[\biota\iota]=[\id]\oplus [\rho_1]\oplus [\rho_2]$, we have either $[\overline{\iota_1}\iota_1]=[\id]\oplus [\rho_1]$ 
or $[\overline{\iota_1}\iota_1]=[\id]\oplus [\rho_2]$. 
In any case, we get $[Q:P]=1+m$, and 
$$[N:Q]=\frac{[N:P]}{[Q:P]}=\frac{1+2m}{1+m}=2-\frac{1}{1+m},$$
which is forbidden by the Jones theorem. 

The case $n=3$ can be treated in a similar way. 
\end{proof}

\begin{remark}\label{uniqueness} The above theorem together with the classification of sharply 2-transitive permutation groups with abelian 
point stabilizers shows that the graph $\cG_{(1^m),1}$ uniquely characterizes the group-subgroup subfactor for 
$H(q)=\F_q\rtimes \F_q^\times>\F_q^\times$ with $q=m+1$. 
In the case of non-commutative $H$, probably the graph $\cG_{\bm,1}$ does not uniquely determine the group $K\rtimes H$ in general. 
However, \cite[ Chapter XII, Theorem 9.7]{HB} shows that possibilities of $H\rtimes K$ for a given $q=m+1$ are very much restricted. 
For example, the graph $\cG_{(1^4,2),1}$ uniquely characterizes $S(3^2)>Q_8$. 
\end{remark}

In the rest of this section, we classify related fusion categories, which is a generalization of \cite[(7.1)]{EGO04}. 

Let $\cC_0$ be a $C^*$-fusion category with the set of (equivalence classes of) simple objects $\mathrm{Irr}(\cC)=\{\alpha_i\}_{i\in I}$. 
We may assume $0\in I$ and $\alpha_0=1$. 
Let $\cC$ be a fusion category containing $\cC_0$ with $\mathrm{Irr}(\cC)=\{\alpha_i\}_{i\in I}\cup \{\rho\}$. 
Then we have $\alpha_i\otimes \rho\cong \rho\otimes \alpha_i=d(\alpha_i)\rho$. 
Indeed, if $\alpha_i\otimes \rho$ contained $\alpha_j$, the Frobenius reciprocity implies that $\alpha_{\overline{i}}\otimes\alpha_j$ 
would contain $\rho$, which is impossible, and the claim holds. 
In particular $m_i=d(\alpha_i)$ is an integer. 
By the Frobenius reciprocity again we get 
$$\rho\otimes \rho\cong \bigoplus_{i\in I}m_i\alpha_i\oplus k\rho,$$
where $k$ is a non-negative integer. 
We now consider the case with $k=m-1$, where 
$$m=\sum_{i\in I}m_i^2.$$
Then $d(\rho)=m$.

\begin{theorem}\label{classification} Let $\cC$ be as above. 
Then there exists a sharply 2-transitive permutation group $G=K\rtimes H$ with the Frobenius kernel $K$ 
and a Frobenius complement $H$ such that $\cC_0$ is equivalent to the representation category of $H$. 
In particular, the number $m+1$ is a prime power $p^k$.  
The category $\cC$ is classified by 
$$\{\omega\in H^3(K\rtimes H,\T)\:|\: \omega|_H=0\}/\Aut(K\rtimes H,H),$$
(or equivalently by $H^3(K,\T)^H/N_{\Aut(K)}(H)$).  
\end{theorem}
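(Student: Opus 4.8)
The plan is to reduce the classification of $\cC$ to the combinatorial/subfactor theory already developed. First I would realize the fusion category $\cC$ concretely as arising from a subfactor. Since $d(\rho)=m$, $\rho\otimes\rho\cong\bigoplus_{i\in I}m_i\alpha_i\oplus(m-1)\rho$, and $\{\alpha_i\}$ are the simple objects of a fusion subcategory $\cC_0$, the hypotheses match exactly the fusion rules recorded just before Theorem \ref{GthF}. Concretely, I would construct an inclusion $N\supset P$ of factors whose $N$-$N$ sectors generate $\cC$ (for instance by the standard reconstruction of a subfactor from a $Q$-system, applying it to the object $\mathrm{id}\oplus\rho$, which carries an algebra structure because $\rho\otimes\rho$ contains $\rho$ with the right multiplicity). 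The principal graph of this inclusion is then precisely $\cG_{\bm,1}$ with $\bm=(m_0,m_1,\ldots,m_l)$ the dimension vector of $\cC_0$, so Theorem \ref{GthF}(1) applies: there is a Frobenius group $G=K\rtimes H$ with $|H|=m$, $|K|=1+m$, hence $G$ sharply $2$-transitive, with $\cC_0\simeq\Rep(H)$. This already gives the first assertion and forces $m+1=p^k$.

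Next I would identify the full category $\cC$ with a categorical crossed-product / equivariantization picture. The point is that $\cC$ is generated over $\cC_0\simeq\Rep(H)$ by a single simple object $\rho$ of dimension $|H|$ with $\rho\otimes\rho\cong(\bigoplus_i m_i\alpha_i)\oplus(m-1)\rho$; this is exactly the fusion datum of $\Rep(K\rtimes H)$ restricted to the two-orbit structure, i.e. $\cC$ should be (a twist of) $\Rep(G)$ viewed as a $\Rep(H)$-module category with the extra generator corresponding to the nontrivial $(H,H)$-double coset. Following the strategy of \cite[Corollary 7.4]{EGO04}, I would argue that any such $\cC$ is obtained from the trivial one ($\Rep(G)$ itself) by twisting the associativity constraint by a $3$-cocycle, and that the admissible twists are exactly those classes $\omega\in H^3(K\rtimes H,\T)$ whose restriction to $H$ vanishes (the vanishing on $H$ being forced because the subcategory $\cC_0\simeq\Rep(H)$ already sits inside $\cC$ with its standard, untwisted associator).

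For the equivalence of the two parametrizations, $H^3(G,\T)^{\omega|_H=0}/\Aut(G,H)$ versus $H^3(K,\T)^H/N_{\Aut(K)}(H)$, I would use the Lyndon--Hochschild--Serre spectral sequence for $1\to K\to G\to H\to 1$. Because $|H|$ and $|K|=p^k$ are coprime (this is the Frobenius group structure) and $H$ has periodic cohomology by Lemma \ref{Fro}, the inflation-restriction analysis collapses: the relevant group cohomology of $G$ with the condition $\omega|_H=0$ is computed by the $H$-invariants $H^3(K,\T)^H$, and the relevant automorphism group reduces to $N_{\Aut(K)}(H)$ acting by conjugation. I would then check that the two quotients agree as sets and that the bijection is natural, completing the classification.

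The main obstacle I expect is the second paragraph: rigorously showing that \emph{every} fusion category $\cC$ with the prescribed fusion rules is a $3$-cocycle twist of $\Rep(G)$, rather than merely classifying the twists once such a rigidification is known. This requires ruling out more exotic associators not coming from group cohomology, and it is precisely the step where one must invoke the rigidity coming from the subfactor realization together with the structure of $\Rep(H)$ as an honest representation category (so that the only freedom lives in the associativity data attached to $\rho$). Once $\cC$ is pinned down as such a twist, the cohomological bookkeeping via the spectral sequence and the $\Aut$-action is essentially routine given the coprimality $\gcd(|H|,|K|)=1$.
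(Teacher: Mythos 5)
Your first step contains a genuine gap, and it is fatal to the route you chose. You propose to reconstruct an inclusion $N\supset P$ with $[\biota\iota]=[\id]\oplus[\rho]$ by endowing $\id\oplus\rho$ with a $Q$-system structure, ``because $\rho\otimes\rho$ contains $\rho$ with the right multiplicity.'' Fusion rules are never sufficient for a $Q$-system: one needs an associative Frobenius algebra structure, and here its existence fails in exactly the cases the theorem is about. Indeed, if such an inclusion $N\supset P$ existed, its principal graph would be $\cG_{\bm,1}$ and the proof of Theorem \ref{GthF} (via Lemma \ref{1dim}, all sectors in $\bkappa\biota\iota\kappa$ being automorphisms) would produce an honest outer action with $N=R\rtimes_\gamma G\supset P=R\rtimes_\gamma H$; this forces the obstruction of the associated $G$-kernel in $\Out(R)$ to vanish, so your construction can only ever reach the $\omega=0$ member of the family, whereas the theorem asserts the categories are parametrized by all of $H^3(K,\T)^H/N_{\Aut(K)}(H)$, which is nontrivial in general. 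Consequently you cannot invoke Theorem \ref{GthF}(1) as a black box, and the rigidification in your second paragraph --- that every such $\cC$ is a cohomological twist --- which you yourself identify as the main obstacle, is left without any argument; appealing to ``rigidity coming from the subfactor realization'' is circular once the realization step is gone.

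The paper's proof supplies precisely the missing device. It embeds $\cC$ in $\End(P)$ for a type III factor $P$ (always possible, no $Q$-system required) and runs the factorization arguments of Lemmas \ref{factorization} and \ref{1dim} directly on the endomorphism $\rho$, obtaining $P=R\rtimes_\beta H$ and $[\rho]=[\kappa\theta\bkappa]$; a double-coset computation in $\Out(R)$ then shows that $[\beta_H]\sqcup[\beta_H][\theta][\beta_H]$ is a Frobenius group $G$ acting sharply $2$-transitively on $G/H$. The classification is then obstruction theory for $G$-kernels: taking $R$ to be the injective type III$_1$ factor, the conjugacy class of $G$ inside $\Out(R)$ is completely determined by its obstruction $\omega\in H^3(G,\T)$; the restriction $\omega|_H$ vanishes because $H$ lifts to $\Aut(R)$, and the triviality of the Schur multiplier $H^2(H,\T)$ of a Frobenius complement (Lemma \ref{Fro}(2)) --- a point your sketch omits --- makes the lifting, hence $P$ and the generator $\rho$, unique, while the realizability of every $G$-kernel for a given $\omega$ yields surjectivity. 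This is exactly what rules out ``exotic associators.'' Your final reduction of $\{\omega\in H^3(G,\T):\omega|_H=0\}$ to $H^3(K,\T)^H$ via the Lyndon--Hochschild--Serre spectral sequence does agree with the paper, though note that only the coprimality of $|K|$ and $|H|$ is used there; the periodicity of the cohomology of $H$ plays no role in that step.
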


\begin{proof} 
For the proof of Theorem \ref{classification}, we may assume that the category $\cC$ is embedded in 
$\End(P)$ for a type III factor $P$. 

In the same way as in the proofs of Theorem \ref{GthF}, 
there exist a unique subfactor $R\subset P$ , up to inner conjugacy, a unique finite group $H$ of order $m$, 
$\theta\in \Aut(R)$, and an outer action $\beta$ of $H$ on $R$ such that 
$$P=R\rtimes_\beta H,$$
and if $\kappa:\hookrightarrow P$ is the inclusion map, 
$$[\kappa\overline{\kappa}]=\bigoplus_{i\in I}m_i[\alpha_i],$$
$$[\overline{\kappa}\kappa]=\bigoplus_{h\in H}[\beta_h],$$
$$[\rho]=[\kappa\theta\overline{\kappa}].$$

Let $G$ be the group generated by $[\beta_H]=\{[\beta_h]\}_{h\in H}$ and $[\theta]$ in $\Out(R)$. 
We will show 
$$G=[\beta_H]\sqcup [\beta_H][\theta][\beta_H],$$ 
whose order is $m(m+1)$, and it is a Frobenius group with 
a Frobenius complement $[\beta_H]$. 

The proof of Lemma \ref{1dim} shows  $[\theta]\notin [\beta_H]$,  
$$[\theta][\beta_H][\theta^{-1}]\cap [\beta_H]=[\id],$$ 
and $|[\beta_H][\theta][\beta_H]|=m^2$.  
Let $G_0=[\beta_H]\cup [\beta_H][\theta][\beta_H]$, which is a subset of $G$ with $|G_0|=m(m+1)$. 
To prove that $G_0$ coincides with $G$, it suffices to show $[\theta][\beta_H][\theta]\subset G_0$ and 
$[\theta^{-1}]\in [\beta_H][\theta][\beta_H]$. 

Let $h\in H$. 
Since $\kappa\theta\beta_h\theta\bar{\kappa}$ is contained in $\rho^2$, it contains either $\alpha_i$ with $i\in I$ or $\rho$. 
If it contains $\alpha_i$, we have 
\begin{align*}
0 &\neq  \dim(\kappa\theta\beta_h\theta\bar{\kappa},\alpha_i)=\dim(\theta\beta_h\theta,\bar\kappa\alpha_i\kappa)
=m_i\dim(\theta\beta_h\theta,\bar\kappa\kappa)\\
&=m_i\sum_{k\in H}\dim(\theta\beta_h\theta,\beta_k),
\end{align*}
which shows $[\theta\beta_h\theta]\in [\beta_H]$. 
If it contains $\rho$, 
$$0\neq \dim(\kappa\theta\beta_h\theta\bar{\kappa},\kappa\theta\bar{\kappa})
=\dim(\theta\beta_h\theta,\bar{\kappa}\kappa\theta\bar{\kappa}\kappa)
=\sum_{k,l}\dim(\theta\beta_h\theta,\beta_k\theta\beta_l),$$
which shows $[\theta\beta_h\theta]\in [\beta_H][\theta][\beta_H]$. 
Therefore we get $[\theta][\beta_H][\theta]\subset G_0$. 

Since $\rho$ is self-conjugate, we have 
$$1=\dim(\bar{\rho},\rho)=\dim(\kappa\theta^{-1}\bar{\kappa},\kappa\theta \bar{\kappa})
=\dim(\theta^{-1},\bar{\kappa}\kappa\theta\bar{\kappa}\kappa)
=\sum_{h,k\in H}\dim(\theta^{-1},\beta_h\theta\beta_k),$$
which shows $[\theta^{-1}]\in [\beta_H][\theta][\beta_H]$. 
Therefore we get $G=G_0$. 

Since $G$ has only two $(H,H)$-double cosets, and the size of $[\beta_H][\theta][\beta_H]$ is $|H|^2$, 
the group $G$ is a Frobenius group with a Frobenius complement $[\beta_{H}]$. 
Moreover, the $G$ action on $G/H$ is sharply 2-transitive. 

For the classification of the category $\cC$, we may assume that $R$ is the injective type III$_1$ factor. 
Then the conjugacy class of $G$ in $\Out(R)$ is completely determined by its obstruction class 
$\omega\in H^3(G,\T)$. 
Since $H$ has a lifting $\beta_H\subset \Aut(R)$, the restriction of $\omega$ to $H$ is trivial. 
Since $H$ is a Frobenius complement, the Schur multiplier $H^2(H,\T)$ is trivial, and 
the lifting is unique, up to cocycle conjugacy, and one can uniquely recover 
$P$ from $R$ and $[\beta_H]$. 
This means that the generator $\rho$ of the category $\tilde{\cC}$ is uniquely determined by $\omega$. 
On the other hand, there always exists a $G$-kernel in $\Out(R)$ for a given $\omega\in H^3(G,\T)$, 
which shows the existence part of the statement.  

Finally, since $|K|$ and $|H|$ are relatively prime, we have 
$$E^{p,q}_2=H^p(H,H^q(K,\T))=0,$$
for $p,q\geq 1$ in the  Lindon/Hochschild-Serre spectral sequence for $G=K\rtimes H$.
Thus the group 
$$\{\omega\in H^3(G,\T)\:|\: \omega|_H=0\},$$
is isomorphic to $H^3(K,\T)^H$. 
\end{proof}

When $H$ is abelian (in fact cyclic in this case), the group $H^3(K,\T)^H$ is explicitly computed in \cite[Corollary 7.4]{EGO04}.

%%%%%%%%%%%%%%%%%%%%%%%%%%%%%%%%%%%%%%%%%%%%%%%%%%%%%%%%%%%%%%%%%%%%%%%%%%%%%%%%%%%%%%%%%%%%%%%%%%%%%%%%%%%%%%%%%%%%%%%%%%%%%%%%%%
%%%%%%%%%%%%%%%%%%%%%%%%%%%%%%%%%%%%%%%%%%%%%%%%%%%%%%%%%%%%%%%%%%%%%%%%%%%%%%%%%%%%%%%%%%%%%%%%%%%%%%%%%%%%%%%%%%%%%%%%%%%%%%%%%%
%%%%%%%%%%%%%%%%%%%%%%%%%%%%%%%%%%%%%%%%%%%%%%%%%%%%%%%%%%%%%%%%%%%%%%%%%%%%%%%%%%%%%%%%%%%%%%%%%%%%%%%%%%%%%%%%%%%%%%%%%%%%%%%%%%

%%%%%%%%%%%%%%%%%%%%%%%%%%%%%%%%%%%%%%%%%%%%%%%%%%%%%%%%%%%%%%%%%%%%%%%%%%%%%%%%%%%%%%%%%%%%%%%%%%%%%%%%%%%%%%%%%%%%%%%%%%%%%%%%%%
%%%%%%%%%%%%%%%%%%%%%%%%%%%%%%%%%%%%%%%%%%%%%%%%%%%%%%%%%%%%%%%%%%%%%%%%%%%%%%%%%%%%%%%%%%%%%%%%%%%%%%%%%%%%%%%%%%%%%%%%%%%%%%%%%%
%%%%%%%%%%%%%%%%%%%%%%%%%%%%%%%%%%%%%%%%%%%%%%%%%%%%%%%%%%%%%%%%%%%%%%%%%%%%%%%%%%%%%%%%%%%%%%%%%%%%%%%%%%%%%%%%%%%%%%%%%%%%%%%%%%
\section{Goldman-type theorems for sharply 3-transitive permutation groups}
Let $\bm$, $n$, $I$, and $m$ be as in the previous section. 
Now we consider the graph $\widetilde{\cG_{\bm,1}}$ (see Subsection 2.3 for the definition of $\widetilde{\cG}$ for a given $\cG$), 
which is described as follows. 
The set of even vertices of $\widetilde{\cG_{\bm,1}}$ is 
$$\{v_i^0\}_{i\in I}\sqcup\{v^2_i\}_{\in I}\sqcup\{v^4\},$$
the set of odd vertices is 
$$\{v^1_i\}_{i\in I}\sqcup\{v^3\}.$$
The only non-zero entries of the adjacency matrix $\Delta$ of $\widetilde{\cG_{\bm,1}}$ are 
$$\Delta(v^0_i,v^1_i)=\Delta(v^1_i,v^0_i)=1,\quad \forall i\in I,$$
$$\Delta(v^1_i,v^2_i)=\Delta(v^2_i,v^1_i)=1,\quad \forall i\in I,$$
$$\Delta(v^{2}_i,v^3)=\Delta(v^3, v^{2}_i)=m_i,\quad \forall i\in I,$$
$$\Delta(v^3,v^4)=\Delta(v^4,v^3)=1.$$
The vertex $v_0^0$ is treated as a distinguished vertex $*$. 
The Perron-Frobenius eigenvalue of $\Delta$ is $\sqrt{2+m}$. 
The Perron-Frobenius eigenvector $d$ normalized as $d(v_0^0)=1$ is 
$$d(v_i^0)=m_i,\quad d(v_i^2)=m_i(1+mn),\quad d(v_4)=m.$$
$$d(v_i^1)=m_i\sqrt{2+mn},\quad d(v_3)=m\sqrt{2+mn}.$$

In \cite{IMPP15}, we showed that a strong Goldman-type theorem for 
$\widetilde{\cG_{(1^3),1}}$.
Now we show it for general sharply 3-transitive permutation groups. 
\begin{figure}[H]
\centering
\begin{tikzpicture}
\draw (-3,0)--(-2,0)--(-1,0)--(0,0)--(1,0)--(2,0)--(3,0);
\draw (0,1)--(0,0)--(0,-1)--(0,-2)--(0,-3);
\draw(-3,0)node{$*$};
\draw(-2,0)node{$\bullet$};
\draw(-1,0)node{$\bullet$};
\draw(0,0)node{$\bullet$};
\draw(1,0)node{$\bullet$};
\draw(2,0)node{$\bullet$};
\draw(3,0)node{$\bullet$};
\draw(0,1)node{$\bullet$};
\draw(0,-1)node{$\bullet$};
\draw(0,-2)node{$\bullet$};
\draw(0,-3)node{$\bullet$};
\end{tikzpicture}
\caption{$\widetilde{\cG_{(1^3),1}}=\cG_{(L(2^2),PG_1(2^2))}=\cG_{(\fA_5,X_5)}$}
\end{figure}

Although we excluded the case $\bm=(1)$ in the definition of $\cG_{\bm,1}$ in Section 3, 
the graph itself makes sense for $\bm=(1)$, and we include this case in the next theorem. 

\begin{theorem}\label{LM} Let $M\supset N$ be a finite index subfactor with $\cG_{M\supset N}=\widetilde{\cG_{\bm,1}}$. 
Then $q=1+m$ is a prime power, and there exists a unique subfactor $R\subset N$ that is irreducible in $M$ such that 
if $\bm=1^m$,   
$$M=R\rtimes L(q)\supset N=R\rtimes H(q),$$
and otherwise 
$$M=R\rtimes M(q)\supset N=R\rtimes S(q).$$
\begin{figure}[H]
\centering
\begin{tikzpicture}
\draw (-3,0)--(-2,0)--(-1,0)--(0,0)--(1,0)--(2,0)--(3,0);
\draw (0,0)--(1,1)--(2,2)--(3,3);
\draw (0,0)--(1,-1)--(2,-2)--(3,-3);
\draw (0,1)--(0,0)--(0,-1)--(0,-2)--(0,-3);
\draw(-3,0)node{$*$};
\draw(-2,0)node{$\bullet$};
\draw(-1,0)node{$\bullet$};
\draw(0,0)node{$\bullet$};
\draw(1,0)node{$\bullet$};
\draw(2,0)node{$\bullet$};
\draw(3,0)node{$\bullet$};
\draw(1,1)node{$\bullet$};
\draw(2,2)node{$\bullet$};
\draw(3,3)node{$\bullet$};
\draw(1,-1)node{$\bullet$};
\draw(2,-2)node{$\bullet$};
\draw(3,-3)node{$\bullet$};
\draw(0,1)node{$\bullet$};
\draw(0,-1)node{$\bullet$};
\draw(0,-2)node{$\bullet$};
\draw(0,-3)node{$\bullet$};
\draw(-0.1,-0.5)node{2};
\draw(-3,0)node[above]{$\id_N$};
\draw(-2,0)node[above]{$\epsilon$};
\draw(-1,0)node[above]{$\sigma$};
\draw(-0.3,0.2)node{$\epsilon\rho'$};
\draw(0,1)node[above]{$\rho'$};
\draw(0.8,1.3)node{$\sigma\alpha_1'$};
\draw(1.8,2.3)node{$\epsilon\alpha_1'$};
\draw(2.8,3.3)node{$\alpha_1'$};
\draw(1,0)node[above]{$\sigma\alpha_2'$};
\draw(2,0)node[above]{$\epsilon\alpha_2'$};
\draw(3,0)node[above]{$\alpha_2'$};
\draw(0.8,-1.3)node{$\sigma\alpha_3'$};
\draw(1.8,-2.3)node{$\epsilon\alpha_3'$};
\draw(2.8,-3.3)node{$\alpha_3'$};
\draw(0,-1)node[left]{$\sigma\alpha_4'$};
\draw(0,-2)node[left]{$\epsilon\alpha_4'$};
\draw(0,-3)node[left]{$\alpha_4'$};
\end{tikzpicture}
\caption{\label{M(9)}$\widetilde{\cG_{(1^4,2),1}}=\cG_{(M(3^2),PG_1(3^2))}$}
\end{figure}
\end{theorem}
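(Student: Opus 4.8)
The plan is to apply the induction scheme of Subsection~2.5 with its base case the sharply $2$-transitive Goldman-type theorem of Theorem~\ref{GthF}. In the notation of that subsection the given inclusion $M\supset N$ of the theorem plays the role of $L\supset M$ (so $\iota_1:M\hookrightarrow L$ is its inclusion map), and the aim is to build $\iota_2:N\hookrightarrow M$, $\iota_3:R\hookrightarrow N$ realizing $L=R\rtimes\Gamma$, $M=R\rtimes G$, $N=R\rtimes H$, where $\Gamma$ is a $3$-transitive group of degree $q+1$, $G=\Gamma_{x_1}$ its sharply $2$-transitive point stabilizer, and $H=\Gamma_{x_1,x_2}$ the associated Frobenius complement; $G$ will be $H(q)$ or $S(q)$ and $\Gamma$ will be $L(q)$ or $M(q)$. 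First I would verify the hypotheses of Subsection~2.5: since $G$ is sharply $2$-transitive, Theorem~\ref{GthF}(1) gives $\cG^{G}_{H}=\cG_{\bm,1}$, whence $\cG_{\Gamma>G}=\widetilde{\cG_{\bm,1}}$ by the recipe of Subsection~2.3, so the given data have the right form; and $3$-transitivity makes the $H$-action on the remaining $q-1$ points a single orbit, of length $\ge 2$ as soon as $q\ge 3$ (the degenerate case $\bm=(1)$, $q=2$ being the classical $\fS_3>\Z_2$ situation).

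Then I would run steps (1)--(5). The fusion rules of $\widetilde{\cG_{\bm,1}}$ are read off combinatorially: the simple objects of dimension at most $m$, namely $\{v_i^0\}_{i\in I}\cup\{v^4\}$, should be shown closed under fusion and conjugation and identified with $\Rep(G)$, giving $\cC_1$ (step (1)). The object of $\cC_1$ corresponding to $\Ind_H^G 1$ splits as $\id\oplus\tau$ with $\tau=v^4$ the $m$-dimensional simple, and by Theorem~\ref{intermediate}(3) it carries a unique $Q$-system; one then checks that the resulting $N\subset M$ admits $\theta\in\Aut(N)$ with $[\overline{\iota_1}\iota_1]=[\id]\oplus[\iota_2\theta\overline{\iota_2}]$ (step (2)). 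A second combinatorial check gives $\cG_{M\supset N}=\cG_{\bm,1}=\cG_{G>H}$ (step (3)), so the induction hypothesis Theorem~\ref{GthF} applies to $M\supset N$ and yields $R$, an outer $G$-action $\gamma$ with $M=R\rtimes_\gamma G$ and $N=R\rtimes_\gamma H$, and the fact that $q=1+m$ is a prime power with kernel $K=\Z_p^k$ (step (4)). Since $[L:R]=(q+1)\,|G|=(q+1)q(q-1)=|\Gamma|$, the depth-$2$ property of $L\supset R$ is a dimension count (step (5)).

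Step (6), promoting $\theta$ to an automorphism of $R$, is handled uniformly. Lemma~\ref{inv1} together with the refinement recorded after it yields $[\theta\iota_3\overline{\iota_3}\theta^{-1}]=[\iota_3\overline{\iota_3}]$, and the cohomological hypothesis of Lemma~\ref{inv2} is automatic here: because $H$ is a Frobenius complement, Lemma~\ref{Fro}(3) shows every abelian subgroup of $H$ — hence every abelian normal subgroup $K\triangleleft H$ — is cyclic, so $H^2(\widehat{K},\T)=0$ and no invariant non-degenerate class can occur. Thus Lemma~\ref{inv2} directly supplies $\theta_1\in\Aut(R)$ with $[\theta\iota_3]=[\iota_3\theta_1]$, completing (6). (The finer criterion Lemma~\ref{inv3} is \emph{not} needed at this level; it is reserved for settings, such as the sharply $4$-transitive case, where the two-point stabilizer carries an abelian normal subgroup with an invariant non-degenerate class.)

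With (1)--(6) in hand, Lemma~\ref{finishing} produces a group $\Gamma_0\supset G$ of index $q+1$ with $L=R\rtimes\Gamma_0$ acting doubly transitively on $\Gamma_0/G$; since its point stabilizer $G$ is sharply $2$-transitive on the remaining $q$ points, $\Gamma_0$ is in fact sharply $3$-transitive of degree $q+1$. The classification of sharply $3$-transitive groups then forces $\Gamma_0\in\{L(q),M(q)\}$, and the dichotomy $\bm=1^m$ versus $\bm\neq 1^m$ (abelian versus non-abelian complement) selects the correct family and identifies $\Gamma_0$ with $\Gamma$ up to permutation conjugacy; uniqueness of $R$ up to inner conjugacy in $M$ follows from the uniqueness of the $Q$-system in step (2) and of the data produced by Theorem~\ref{GthF}. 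I expect the combinatorial heart of the argument — steps (1) and (3), establishing that the even part of $\widetilde{\cG_{\bm,1}}$ genuinely carries the fusion data of $\Gamma>G$, and in particular that $\cC_1\cong\Rep(G)$ with the correct $\theta$ in (2) — to be the main technical obstacle, especially for the $S(q)/M(q)$ family where the complement, and hence $\Rep(H)$, is non-commutative; the \emph{strength} of the conclusion (uniqueness of the permutation group itself) rests entirely on the classification of sharply $3$-transitive groups.
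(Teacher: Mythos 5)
Your proposal is correct and takes essentially the same route as the paper's proof of Theorem \ref{LM}: the six-step scheme of Subsection 2.5 with Theorem \ref{GthF} as base case, the unique intermediate subfactor $P$ from Theorem \ref{intermediate} attached to $\id\oplus\rho'$ (of dimension $1+m$, i.e.\ $\Ind_H^G 1$) inside $N\supset\sigma(N)$, the combinatorial identification $\cG^d_{N\supset P}=\cG_{\bm,1}$ upgraded to the principal graph via Theorem \ref{GthF}(1), depth 2 by the dimension count you indicate, and the final identification through the classification of sharply $3$-transitive groups, with the degenerate case $\bm=(1)$ handled by the classical $A_5$ result. You also correctly isolate the one delicate point and resolve it exactly as the paper does: Lemma \ref{inv1} plus a dimension count gives $[\tau\kappa\bkappa\tau^{-1}]=[\kappa\bkappa]$, and Lemma \ref{inv2} applies because every abelian subgroup of a Frobenius complement is cyclic (Lemma \ref{Fro}(3)), so Lemma \ref{inv3} is indeed not needed until the sharply $4$-transitive cases.
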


\begin{proof} If $\bm=(1)$, the graph $\widetilde{\cG_{(1),1}}$ is nothing but the Coxeter graph $A_5$, and the 
statement follows from \cite{I92} as $(\fS_3,X_3)\cong (PGL_2(2),PG_1(2))$. 
We assume $\bm\neq (1)$ in what follows. 

We follow the strategy described in Subsection 2.5 taking the 6 steps. 

(1)  Let $\epsilon:N \hookrightarrow M$ be the inclusion map, and let $\cC$ be the fusion category generated by $\bepsilon\epsilon$. 
We first parametrize $\Irr(\cC)$. 
Let $[\bepsilon\epsilon]=[\id]\oplus [\sigma]$ be the irreducible decomposition, which means that 
$\sigma$ corresponds to the vertex $v_0^2$. 
We denote by $\alpha_i'$ and $\rho'$ the endomorphisms of $N$ corresponding to $v_i^0$ and $v^4$ respectively. 
Then $\epsilon\alpha_i'$, $\sigma\alpha'_i$, and $\epsilon\rho'$ are irreducible, and they correspond 
to $v_i^1$, $v_i^2$, $v^3$ respectively. 
Thus 
$$\Irr(\cC)=\{\alpha'_i\}_{i\in I}\sqcup \{\sigma\alpha'_i\}_{i\in I}\sqcup \{\rho'\}.$$
We have 
$$d(\alpha_i)=m_i,\quad d(\epsilon)=\sqrt{2+m},\quad d(\sigma)=1+m, \quad d(\rho')=m.$$
Two endomorphisms $\sigma$ and $\rho'$ are self-conjugate. 
We introduce two involutions of $I$ by $[\overline{\alpha'_i}]=[\alpha'_{\bar{i}}]$ and  
$[\overline{\sigma\alpha'_i}]=[\sigma\alpha'_{i^*}]$. 
Then they are related by $[\sigma\alpha'_{i^*}]=[\alpha'_{\bar{i}}\sigma]$. 

By dimension counting, we see that there exists a fusion subcategory $\cC_0$ of $\cC$ with 
$$\Irr(\cC_0)=\{\alpha'_i\}_{i\in I}.$$
We claim that there exists another fusion subcategory $\cC_1$ of $\cC$ with 
$$\Irr(\cC_1)=\{\alpha'_i\}_{i\in I}\sqcup \{\rho'\}.$$
Indeed, if $\rho'\alpha_i$ contained $\sigma\alpha'_{i_1}$, the Frobenius reciprocity implies that 
$\sigma\alpha'_{i_1}\alpha'_{\bar{i}}$ would contain $\rho'$, and hence $\sigma\alpha'_{i_2}$ would 
contain $\rho'$ for some $i_2$, which is contradiction. 
Thus $\rho\alpha_i$ is decomposed into a direct sum of sectors in $\{\alpha'_{i_1}\}_{i_1\in I}\cup\{\rho'\}$, 
and dimension counting shows 
\begin{equation}
[\rho'\alpha'_i]=m_i[\rho'], \quad [\alpha'_i\rho]=m_i[\rho],
\end{equation}
where the second equality follows from the first one by conjugation. 

From the shape of the graph $\widetilde{\cG_{\bm,1}}$, we can see
\begin{equation}\label{sigma2}
[\sigma^2]=[\id]\oplus [\rho']\oplus \bigoplus_{i\in I}m_i[\sigma\alpha'_i],
\end{equation}
\begin{equation}
[\sigma\rho']=\bigoplus_{i\in I}m_i[\sigma\alpha'_i]. 
\end{equation}
Using these and associativity, we have 
\begin{align*}
[\sigma][\sigma\rho']&=\bigoplus_{i_1\in I}m_i[\sigma][\sigma\alpha'_{i}] \\
 &=\bigoplus_{i\in I}m_{i}([\id]\oplus [\rho']\oplus \bigoplus_{i'\in I}m_{i'}[\sigma\alpha'_{i'}])[\alpha'_{i}] \\
 &=\bigoplus_{i\in I}m_{i}[\alpha'_i]\oplus m[\rho']\oplus\bigoplus_{i,i'}m_{i'}[\sigma\alpha'_{i'}\alpha'_{i}].
\end{align*}
On the other hand,
\begin{align*}
\lefteqn{[\sigma][\sigma\rho']=[\sigma^2][\rho']=([\id]\oplus [\rho']\oplus \bigoplus_{i\in I}m_i[\sigma\alpha'_i])[\rho'] } \\
 &=[\rho']\oplus [{\rho'}^2]\oplus  \bigoplus_{i\in I}m_i^2[\sigma\rho']=[\rho']\oplus [{\rho'}^2]
 \oplus  m\bigoplus_{i\in I}m_i[\sigma\alpha'_i]. 
\end{align*}
Since $\sigma\alpha'_{i'}\alpha'_{i}$ is a direct sum of irreducibles of the form $\sigma\alpha'_{i''}$, 
the endomorphism $\rho^2$ contains 
$$\bigoplus_{i\in I}m_i[\alpha'_i]\oplus (m-1)[\rho'],$$
and comparing dimensions, we get 
\begin{equation}
[{\rho'}^2]=\bigoplus_{i\in I}m_{i}[\alpha'_i]\oplus (m-1)[\rho'].
\end{equation}
Therefore the claim is shown. 

(2) 
Form Eq.(\ref{sigma2}) and Theorem \ref{intermediate}, there exists a unique intermediate subfactor $N\supset P\supset \sigma(N)$ such that 
if $\iota :P\hookrightarrow N$ is the inclusion map, we have $[\iota\biota]=[\id]\oplus [\rho']$. 
Let $\cC_2$ be the fusion category generated by $\biota\iota$. 
As in the proof of Lemma \ref{factorization}, there exists $\tau\in \Aut(P)$ satisfying 
\begin{equation}
[\sigma]=[\iota\tau\biota].
\end{equation}

(3) From the fusion rules of $\cC_1$, we can see that the dual principal graph $\cG_{N\supset P}^d$ is $\cG_{\bm,1}$,  
and Theorem \ref{GthF},(1) shows that so is the principal graph $\cG_{N>P}$ too. 
Therefore we can arrange the labeling of irreducibles of $\cC_2$ so that 
$$\Irr(\cC_2)=\{\alpha_i\}\sqcup \{\rho_i\},$$
and $[\alpha_i'\iota]=[\iota\alpha_i]$ and $[\biota\iota]=[\id]\oplus [\rho]$. 

(4) Now we apply Theorem \ref{GthF}, and we get a unique subfactor $R\subset P$, up to inner conjugacy such that $R'\cap P=\C$ 
and there exists an outer action $\beta$ of a Frobenius group $K\rtimes H$ satisfying 
$$N=R\rtimes_\beta (H\rtimes K)\supset P=R\rtimes_\beta H.$$
Moreover the $K\rtimes H$-action on $(K\rtimes H)/H$ is sharply 2-transitive. 
We denote by $\kappa:R\hookrightarrow P$ the inclusion map. 
Then we have 
$$[\iota\kappa\bkappa\biota]=\bigoplus_{i\in I}m_i[\iota\alpha_i\biota]=\bigoplus_{i\in I}m_i[\alpha'_i\iota\biota]
=\bigoplus_{i\in I}m_i[\alpha'_i]([\id]\oplus \rho')=\bigoplus_{i\in I}m_i[\alpha'_i]\oplus m[\rho'],$$
which shows 
$$\dim(\epsilon\iota\kappa,\epsilon\iota\kappa)=\dim(\bepsilon\epsilon,\iota\kappa\bkappa\biota)=1,$$
and $R$ is irreducible in $M$. 

(5) Since 
$$[M:P]=[M:N][N:P][P:R]=(m+2)(m+1)m,$$ 
to prove that the inclusion $L\supset R$ is of depth 2, it suffices to show that the number $(m+2)(m+1)m$ coincides with 
the following dimension: 
$$\dim(\epsilon\iota\kappa\overline{(\epsilon\iota\kappa)},\epsilon\iota\kappa\overline{(\epsilon\iota\kappa)})
=\dim(\bepsilon\epsilon\iota\kappa\bkappa\biota,\iota\kappa\bkappa\biota\bepsilon\epsilon )
=\dim ((\id\oplus \sigma)\iota\kappa\bkappa\biota,\iota\kappa\bkappa\biota(\id\oplus \sigma)).$$
Note that $[\sigma]$ commutes with $[\rho']$ and 
$$\bigoplus_{i\in I}m_i[\alpha'_i],$$
and hence with $[\iota\kappa\bkappa\biota]$. 
Thus this number is equal to 
$$=\dim ((\id\oplus \sigma)\iota\kappa\bkappa\biota,(\id\oplus \sigma)\iota\kappa\bkappa\biota)
=\dim ((\id\oplus \sigma)^2, (\iota\kappa\bkappa\biota)^2).$$
Since the fusion category generated by $\iota\kappa\bkappa\biota$ is equivalent to the representation category 
$\Rep(K\rtimes H)$, and $\iota\kappa\bkappa\biota$ corresponds to the regular representation of $K\rtimes H$, we get 
$$[(\iota\kappa\bkappa\biota)^2]=m(m+1)[\iota\kappa\bkappa\biota].$$
Thus \begin{align*}
\lefteqn{\dim ((\id\oplus \sigma)^2, (\iota\kappa\bkappa\biota)^2)
=m(m+1)(\id\oplus 2\sigma\oplus \sigma^2, \iota\kappa\bkappa\biota)} \\
 &= m(m+1)\dim(2\id \oplus \rho \oplus 2\sigma\oplus \bigoplus_{i\in I}m_i\sigma\alpha'_i, 
 \bigoplus_{i\in I}m_i\alpha'_i\oplus m\rho')\\
 &=m(m+1)(m+2),
\end{align*}
and the inclusion $M\supset R$ is of depth 2.

(6) Now Lemma \ref{inv1} shows that we have 
$$m=\dim(\tau\biota\iota\kappa\bkappa\tau^{-1},\biota\iota\kappa\bkappa),$$
and 
$$[\biota\iota\kappa\bkappa]=[(\id\oplus\rho)\bigoplus_{i\in I}m_i\alpha_i]=\bigoplus_{i\in I}m_i[\alpha_i]
\oplus m[\rho].$$
Dimension counting implies 
$$m=\dim(\bigoplus_{i\in I}m_i\tau\alpha_i\tau^{-1},\bigoplus_{i\in I}m_{i}\alpha_{i}),$$
and this is possible only if $[\tau\kappa\bkappa\tau^{-1}]=[\kappa\bkappa]$. 
Since $H$ is a Frobenius complement, every abelian subgroup of $H$ is cyclic, and Lemma \ref{inv2} implies 
there exists $\tau_1\in \Aut(R)$ satisfying $[\tau\kappa]=[\kappa][\tau_1]$. 

Now Lemma \ref{finishing} shows that there exists a group $G$ including $K\rtimes H$, and outer $G$-action on 
$R$ extending $\beta$ satisfying $M=R\rtimes_\gamma G$. 
The principal graph $\cG_{M\supset N}$ shows that the $G$-action on $G/(K\rtimes H)$ is 3-transitive. 
Since $|G/(K\rtimes H)|=m+2$, and $|G|=m(m+1)(m+2)$, the permutation group $G$ is sharply 3-transitive. 
Now the statement follows from the classification of sharply 3-transitive permutation groups.  
\end{proof}

We devote the rest of this section to a preparation of the Goldman-type theorem for the Mathieu groups $M_{11}$. 
Since $M(3^2)$ and $S(3^2)$ are a point stabilizer and a two point stabilizer of the sharply 4-transitive action of 
$M_{11}$, we denote $M(3^2)=M_{10}$ and $S(3^2)=M_9$.  
We first determine the dual principal graph $\cG_N^M$ in the case of $M_{10}>M_9$. 
Since this graph is the induction-reduction graph $\cG_{M_9}^{M_{10}}$, the irreducible $M$-$M$ sectors are parametrized by 
the irreducible representations of $M_{10}$, whose ranks are $1,1,9,9,10,10,10,16$ (see \cite[Table 8]{GH21}).  

We parametrize the irreducible $N$-$N$ and $M$-$N$ sectors as in the above proof and Figure \ref{M(9)}. 
Theorem \ref{GthF},(1) shows that $N\supset P$ and its dual inclusion 
are isomorphic subfactor associated with $(S(3^2),\F_{3^2})$ (see Remark \ref{uniqueness}), 
and the two fusion categories $\cC_1$ and $\cC_2$ are equivalent. 
On the other hand, the fusion subcategory generated by $\kappa\bkappa$ in $\cC_2$ is equivalent to $\Rep(Q_8)$. 
Thus the fusion category $\cC_0$ is equivalent to $\Rep(Q_8)$. 
In particular, we have $\bar{i}=i$ for all $i$. 
Since at least one of $\{1,2,3\}$ is fixed by the other involution $i\mapsto i^*$, we may and do assume $1^*=1$, 
and $\sigma\alpha_1$ is self-conjugate. 
Since $d(\alpha'_4)=2$, the two sectors $\alpha'_4$ and $\sigma\alpha'_4$ are self-conjugate. 

Let $[\epsilon\bepsilon]=[\id]\oplus [\pi]$ be the irreducible decomposition. 
Then $d(\pi)=9$. 
Since 
$$\dim(\epsilon\alpha'_i\bepsilon,\epsilon\alpha'_i\bepsilon)=\dim(\bepsilon\epsilon\alpha'_i,\alpha'_i\bepsilon\epsilon)
=\dim ((\id\oplus \sigma)\alpha'_i,\alpha'_i(\id\oplus \sigma))=1+\dim(\sigma\alpha'_{i},\sigma\alpha'_{i^*}),$$
if $i^*=i$, the endomorphism $\epsilon\alpha'_i\bepsilon$ is decomposed into two irreducibles, and otherwise it is irreducible. 
Thus $\epsilon\alpha'_1\bepsilon$ is decomposed into two irreducibles. 
Since $d(\epsilon\alpha'_1\bepsilon)=10$, it is a direct sum of a 1-dimensional representation and a 9-dimensional representation,  
and we denote the former by $\chi$. 
Then the Frobenius reciprocity implies $[\chi\epsilon]=[\epsilon\alpha'_1]$, and  
$$[\epsilon\alpha'_1\bepsilon]=[\chi\epsilon\bepsilon]=[\chi]\oplus [\chi\pi].$$

Since $\epsilon\alpha'_i\bepsilon$ for $i=2,3,$ cannot contain a 1-dimensional representation, we have $2^*=3$, and 
$\xi:=\epsilon\alpha'_2\bepsilon$ is irreducible. 
By
\begin{align*}
\lefteqn{[\epsilon\alpha'_i\bepsilon][\epsilon]=[\epsilon\alpha'_i(\id\oplus \sigma)]
=[\epsilon\alpha'_i]\oplus [\epsilon\alpha'_i\sigma]} \\
 &=[\epsilon\alpha'_i]\oplus [\epsilon\sigma\alpha'_{i^*}]=
[\epsilon\alpha'_i]\oplus [\epsilon\alpha'_{i^*}]\oplus d(\alpha'_i)[\epsilon\rho'],
\end{align*} 
and the Frobenius reciprocity, we also have $[\epsilon\alpha'_2\bepsilon]=[\xi],$ and 
$$[\xi\epsilon]= [\epsilon\alpha'_2]\oplus [\epsilon\alpha'_{3}]\oplus [\epsilon\rho'].$$

Since $d(\epsilon\alpha'_4\bepsilon)=20$, we have 
$$[\epsilon\alpha'_4\bepsilon]=[\eta_1]\oplus [\eta_2],$$ 
with $d(\eta_1)=d(\eta_2)=10$, and  
$$[\eta_1\epsilon]=[\epsilon\alpha'_4]\oplus [\epsilon\rho'].$$
$$[\eta_2\epsilon]=[\epsilon\alpha'_4]\oplus [\epsilon\rho'].$$

There is one irreducible representation of $M_{10}$ missing, which we denote by $\zeta$. 
By the Frobenius reciprocity and $d(\zeta)=16$, we get 
$$[\epsilon\rho'\bepsilon]=[\pi]\oplus [\chi\pi]\oplus [\xi]\oplus [\eta_1]\oplus [\eta_2]\oplus 2[\zeta].$$
Thus the graph $\cG_{M_{9}}^{M_{10}}$ is as follows. 
\begin{figure}[H]
\centering
\begin{tikzpicture}
\draw (-3,1.5)--(-2,1)--(-1,0.5)--(0,0)--(-1,-0.5)--(-2,-1)--(-3,-1.5);
\draw (0,0)--(0,1)--(-1,2);
\draw (0,1)--(1,2);
\draw (0,0)--(1,0.5)--(2,0)--(1,-0.5)--(0,0);
\draw (0,0)--(0,-1);
\draw(0.1,-0.5)node{2};
\draw(-3,1.5)node{$*$};
\draw(-2,1)node{$\bullet$};
\draw(-1,0.5)node{$\bullet$};
\draw(0,0)node{$\bullet$};
\draw(-1,-0.5)node{$\bullet$};
\draw(-2,-1)node{$\bullet$};
\draw(-3,-1.5)node{$\bullet$};
\draw(0,1)node{$\bullet$};
\draw(-1,2)node{$\bullet$};
\draw(1,2)node{$\bullet$};
\draw(1,0.5)node{$\bullet$};
\draw(1,-0.5)node{$\bullet$};
\draw(2,0)node{$\bullet$};
\draw(0,-1)node{$\bullet$};
\draw(-3.1,1.3)node{$\id_M$};
\draw(-2.1,0.8)node{$\epsilon$};
\draw(-1.1,0.3)node{$\pi$};
\draw(-3.2,-1.3)node{$\chi$};
\draw(-2.1,-0.8)node{$\chi\epsilon$};
\draw(-1.1,-0.3)node{$\chi\pi$};
\draw(0.2,0.3)node{$\epsilon_0$};
\draw(0,1)node[right]{$\xi$};
\draw(-1.1,1.7)node{$\epsilon_2$};
\draw(1.2,1.7)node{$\epsilon_3$};
\draw(1,0.5)node[above]{$\eta_1$};
\draw(1,-0.5)node[below]{$\eta_2$};
\draw(2,0)node[right]{$\epsilon_4$};
\draw(0,-1)node[below]{$\zeta$};
\end{tikzpicture}
\caption{\label{M10-M9}$\cG_{M_9}^{M_{10}}$}
\end{figure}

\begin{theorem}\label{dualM10-M9} Let $M\supset N$ be a finite index subfactor with $\cG_{M\supset N}^d=\cG_{M_9}^{M_{10}}$. 
Then we have $\cG_{M\supset N}=\cG_{M_{10}>M_9}$. 
In consequence, there exists a unique subfactor $R\subset N$ up to inner conjugacy, that is irreducible in $M$ such that 
$$M=R\rtimes M_{10}\supset N=R\rtimes M_9.$$
\end{theorem}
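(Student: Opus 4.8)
The plan is to deduce the principal graph $\cG_{M\supset N}$ from the given dual principal graph and show that it coincides with $\cG_{M_{10}>M_9}=\widetilde{\cG_{(1^4,2),1}}$; once this is done, Theorem \ref{LM} (applied with $\bm=(1^4,2)$, so that $q=1+m=9$, $M(9)=M_{10}$ and $S(9)=M_9$) immediately yields the subfactor $R$, its uniqueness up to inner conjugacy, and the group-subgroup description $M=R\rtimes M_{10}\supset N=R\rtimes M_9$. Thus the whole content of the proof is the passage from the dual graph to the principal graph.

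First I would fix the inclusion map $\epsilon\colon N\hookrightarrow M$ and read off the combinatorial data of $\cG^d_{M\supset N}=\cG_{M_9}^{M_{10}}$: the relation $[\epsilon\bepsilon]=[\id]\oplus[\pi]$ with $d(\pi)=9$, so that $[M:N]=10$; the eight irreducible $M$-$M$ sectors of dimensions $1,1,9,9,10,10,10,16$, whose squares sum to the global index $720$; and the six irreducible $M$-$N$ sectors, of dimensions $\sqrt{10}\,(1,1,1,1,2,8)$. Since $\dim(\bepsilon\epsilon,\bepsilon\epsilon)=\dim(\epsilon\bepsilon,\epsilon\bepsilon)=2$ by the Frobenius reciprocity, I obtain $[\bepsilon\epsilon]=[\id]\oplus[\sigma]$ with $\sigma$ irreducible, self-conjugate, and $d(\sigma)=9$.

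The principal graph is then recovered by decomposing the $N$-$N$ sectors $\bepsilon X\epsilon$ as $X$ runs over the eight $M$-$M$ sectors, reading the $N$-$N$--to--$M$-$N$ incidences from $\dim(\bepsilon Y,Z)=\dim(Y,\epsilon Z)$. The Frobenius reciprocity, self-conjugacy of $\sigma$, and the adjacencies already encoded in $\cG_{M_9}^{M_{10}}$ constrain all multiplicities; for instance $[\bepsilon\chi\epsilon]=[\alpha'_1]\oplus[\sigma\alpha'_1]$ produces a one-dimensional sector $\alpha'_1$ together with a nine-dimensional $\sigma\alpha'_1$. Carrying out this bookkeeping, I expect to obtain $N$-$N$ sectors of dimensions $1,1,1,1,2$ (forming a copy of $\widehat{Q_8}$ and closing under fusion into a subcategory equivalent to $\Rep(Q_8)$), $9,9,9,9,18$, and a single eight-dimensional sector $\rho'$, whose squares again sum to $720$; this total forces the list to be complete and identifies the bipartite incidence graph as $\widetilde{\cG_{(1^4,2),1}}$. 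Equivalently, I could isolate the self-conjugate eight-dimensional $\rho'\subset\sigma^2$, invoke Theorem \ref{intermediate} to build the intermediate subfactor $N\supset P$ with $[\biota\iota]=[\id]\oplus[\rho']$, recognize $\cG_{N\supset P}=\cG_{N\supset P}^d=\cG_{(1^4,2),1}$ (the $S(3^2)>Q_8$ graph of the first figure), and conclude $\cG_{M\supset N}=\widetilde{\cG_{N\supset P}}$.

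The main obstacle is precisely that a dual principal graph does not a priori determine the principal graph, so the decomposition bookkeeping must be executed with care: one has to verify that every induced sector $\bepsilon X\epsilon$ and $\bepsilon Y$ splits exactly as needed, with no spurious irreducibles and with the correct multiplicities (in particular the double edge at the eighteen-dimensional vertex), and that the running total of squared dimensions closes at $720$ so that no further $N$-$N$ sectors can exist. The delicate technical heart is producing the self-conjugate eight-dimensional sector $\rho'$ with the right fusion behaviour, since it is this object that makes the intermediate subfactor $N\supset P$ fall under Theorem \ref{GthF} and hence places $\cG_{M\supset N}$ within the scope of Theorem \ref{LM}.
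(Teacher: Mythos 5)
Your overall architecture matches the paper's (deduce $\cG_{M\supset N}$ from the dual graph, then quote Theorem \ref{LM}), but the proposal has a genuine gap at exactly the point you flag as ``bookkeeping'': the claim that Frobenius reciprocity plus the closing of the squared-dimension count at $720$ forces the sector list $1,1,1,1,2;\,9,9,9,9,18;\,8$. It does not. When one decomposes $\bepsilon\epsilon_4$, one finds two distinct irreducibles $\heta_1,\heta_2$, and there are two a priori possibilities: (i) $[\epsilon\heta_1]=[\epsilon\heta_2]=[\epsilon_4]\oplus[\epsilon_0]$, giving $d(\heta_1)=d(\heta_2)=10$ and forcing $\bepsilon\epsilon_0$ to contain \emph{three} distinct $8$-dimensional irreducibles $\rho_1,\rho_2,\rho_3$; or (ii) $[\epsilon\heta_1]=[\epsilon_4]\oplus 2[\epsilon_0]$, $[\epsilon\heta_2]=[\epsilon_4]$, giving the dimensions $2$ and $18$ you want. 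The competing solution (i) yields sectors of dimensions $1,1,1,1;\,9,9,9,9;\,10,10;\,8,8,8$, whose squares sum to $4+324+200+192=720$ as well, so it is fully consistent with your global-dimension test and with the incidences read off from $\cG_{M_9}^{M_{10}}$. The paper excludes (i) by a structural, not combinatorial, argument: the order-$4$ group $\Lambda$ of invertible sectors acts on $\{[\rho_1],[\rho_2],[\rho_3]\}$ and must fix some $[\rho_1]$; Theorem \ref{intermediate} then produces an intermediate subfactor of index $4$ in $N\supset\rho_1(N)$ and a factorization $\rho_1=\mu_1\mu_2$ with $d(\mu_1)=2$, $d(\mu_2)=4$; but $\overline{\mu_2}\mu_2$ has dimension $16$ and would have to contain $1$, $2$ or $4$ automorphisms inside the category, which is impossible since the remaining irreducibles have dimensions $9$, $10$, $8$. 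Without this step (or a substitute), your route via isolating a self-conjugate $8$-dimensional $\rho'$ with $[\iota\biota]=[\id]\oplus[\rho']$ is circular: whether there is a single such $\rho'$ closing under fusion with $\id$ is precisely what case (i) puts in doubt.

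A secondary, smaller omission: the graph $\cG_{M_9}^{M_{10}}$ alone does not determine the contragredient map on $\{\xi,\eta_1,\eta_2\}$ nor the left/right action of $\chi$, and it leaves open whether $[\epsilon_2\overline{\epsilon_2}]$ is $[\id]\oplus[\pi]$ or $[\id]\oplus[\chi\pi]$. The paper resolves these in two preparatory lemmas (showing $[\overline{\xi}]=[\xi]$, $[\chi\eta_1]=[\eta_1\chi]=[\eta_2]$, and $[\epsilon_2\overline{\epsilon_2}]=[\id]\oplus[\pi]$) via case-by-case fusion-rule contradictions; these facts are inputs to the $\bepsilon\chi\epsilon$ and $\bepsilon\epsilon_2$ decompositions you sketch, so they cannot be absorbed into routine reciprocity computations. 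Once both gaps are filled, your final reduction to Theorem \ref{LM} (with $\bm=(1^4,2)$, $q=9$) is exactly the paper's, and is correct.
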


We divide the proof in a few steps. 
We parametrize the $M$-$M$ sectors and $M$-$N$ sectors as in Figure \ref{M10-M9}.
Then
$$d(\chi)=1,\quad d(\pi)=9,\quad d(\xi)=d(\eta_1)=d(\eta_2)=10,\quad d(\zeta)=16,$$
$$d(\epsilon)=d(\epsilon_2)=d(\epsilon_3)=\sqrt{10},\quad d(\epsilon_4)=2\sqrt{10},\quad d(\epsilon_0)=8\sqrt{10}.$$
From the graph, we can see that $\pi$, $\chi\pi$, $\chi$, $\zeta$ are self-conjugate, 
$$\{[\overline{\xi}],[\overline{\eta_1}],[\overline{\eta_1}]\}=\{[\xi],[\eta_1],[\eta_2]\},$$
and this with the graph symmetry implies  
$$[\chi^2]=[\id],\quad [\chi\pi]=[\pi\chi], \quad [\chi\zeta]=[\zeta\chi]=[\zeta],\quad  [\chi\xi]=[\xi],\quad $$
$$\{[\chi\eta_1],[\chi\eta_2]\}=\{[\eta_1],[\eta_2]\}.$$
The basic fusion rules coming from the graph are:
$$[\pi\epsilon]=[\epsilon]\oplus[\epsilon_0],\quad [\zeta\epsilon]=2[\epsilon_0],\quad [\xi\epsilon]=[\epsilon_2]\oplus [\epsilon_3]\oplus [\epsilon_0],$$
$$[\eta_1\epsilon]=[\eta_2\epsilon]=[\epsilon_4]\oplus [\epsilon_0],$$
$$[\epsilon\bepsilon]=[\id]\oplus[\pi],\quad [\epsilon_2\bepsilon]=[\epsilon_3\bepsilon]=[\xi],\quad [\epsilon_4\bepsilon]=[\eta_1]\oplus [\eta_2],$$
$$[\epsilon_0\bepsilon]=[\pi]\oplus [\chi\pi]\oplus [\xi]\oplus [\eta_1]\oplus[\eta_2]\oplus 2[\zeta].$$
We denote the last sector by $\Sigma$ for simplicity. 
Then we have $\overline{\Sigma}=\Sigma$, and associativity implies 
$$[\pi^2]=[\id]\oplus \Sigma,\quad [\xi\pi]=[\xi]\oplus \Sigma,\quad [\eta_1\pi]=[\eta_2]\oplus \Sigma,\quad [\eta_2\pi]=[\eta_1]\oplus \Sigma,$$
$$[\zeta\pi]\oplus [\zeta]=2\Sigma. $$
The Frobenius reciprocity implies 
\begin{equation}\label{pi1}
\dim(\bar{\xi}\xi,\pi)=\dim (\overline{\eta_2}\eta_1,\pi)=\dim (\overline{\eta_1}\eta_2,\pi)=2,
\end{equation}
\begin{equation}\label{pi2}
\dim(\overline{\eta_i}\xi,\pi)=\dim(\bar{\xi}\eta_i,\pi)=\dim(\overline{\eta_i}\eta_i,\pi)=1. 
\end{equation}
\begin{equation}\label{pi3}
\dim(\overline{\xi}\zeta,\pi)=\dim(\overline{\eta_i}\zeta,\pi)=2,
\end{equation}
\begin{equation}\label{pi4}
\dim(\overline{\zeta}\zeta,\pi)=3.
\end{equation}

\begin{lemma} With the above notation, we have $[\overline{\xi}]=[\xi]$ and $[\chi\eta_1]=[\eta_1\chi]=[\eta_2]$. 
\end{lemma}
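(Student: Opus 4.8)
The plan is to first determine how the order-two invertible $\chi$ and the duality $\overline{\;\cdot\;}$ permute the three ten-dimensional sectors $\xi,\eta_1,\eta_2$, thereby collapsing the whole lemma to a single assertion, and only then to attack that assertion.

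First I would record that right tensoring by $\chi$ also fixes $\xi$. Multiplying $[\xi\pi]=[\xi]\oplus\Sigma$ on the right by $\chi$ and using $[\pi\chi]=[\chi\pi]$, $[\chi^2]=[\id]$ (so $[\chi\pi\chi]=[\pi]$) and $[\zeta\chi]=[\zeta]$, together with the fact that right tensoring by the invertible $\chi$ permutes the simple objects preserving dimension and fixes $\zeta$, gives $[\Sigma\chi]=[\Sigma]$ and hence $[\xi\chi]\oplus\Sigma=[(\xi\chi)\pi]$. Writing $W=\xi\chi$ for the resulting ten-dimensional simple, the $\pi$-fusion rules give $[W\pi]=[W']\oplus\Sigma$ where $W'$ equals $W$ only when $W=\xi$ (they send $\eta_1\mapsto\eta_2$ and $\eta_2\mapsto\eta_1$); comparing, $\xi\chi$ must be fixed, so $[\xi\chi]=[\xi]$. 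Conjugating the two identities $[\chi\xi]=[\xi]$ and $[\xi\chi]=[\xi]$ (and using $\overline{\chi}=\chi$) shows that $\overline{\xi}$ is fixed by both left and right tensoring with $\chi$. Consequently, \emph{if} $\chi$ interchanges $\eta_1$ and $\eta_2$, then $\xi$ is the only ten-dimensional simple left fixed by $\chi$, forcing $[\overline{\xi}]=[\xi]$; and conjugating $[\chi\eta_1]=[\eta_2]$ then yields $[\eta_1\chi]=[\eta_2]$ after matching labels. Thus the entire lemma reduces to the single claim that $\chi$ does \emph{not} fix $\eta_1$.

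As a warm-up that isolates the mechanism I would prove that $\chi$ interchanges $\epsilon_2$ and $\epsilon_3$. Since $[\epsilon_2\bepsilon]=[\xi]$ we get $[\chi\epsilon_2\cdot\bepsilon]=[\chi\xi]=[\xi]$, and the only $M$-$N$ sectors $\mu$ with $[\mu\bepsilon]=[\xi]$ are $\epsilon_2,\epsilon_3$, so $[\chi\epsilon_2]\in\{[\epsilon_2],[\epsilon_3]\}$. If $[\chi\epsilon_2]=[\epsilon_2]$, then $\chi$ is contained in $\epsilon_2\overline{\epsilon_2}$; but $d(\epsilon_2\overline{\epsilon_2})=10$ and this object contains $\id$ exactly once, so its complement has dimension $9$, and a copy of the one-dimensional $\chi$ would have to be accompanied by simples of total dimension $8$ — impossible, as no eight-dimensional simple exists. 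Hence $[\chi\epsilon_2]=[\epsilon_3]$.

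The hard part will be transferring this to $\eta_1,\eta_2$: I must rule out $[\chi\eta_1]=[\eta_1]$, i.e. that $\chi$ is contained in $\eta_1\overline{\eta_1}$. The clean obstruction above fails verbatim, because $d(\eta_1\overline{\eta_1})=100$ leaves ample room for a copy of $\chi$; worse, \emph{every} relation available from the graph (the $\pi$-rules, the $\epsilon$-action $[\eta_1\epsilon]=[\eta_2\epsilon]$, and $\{[\chi\eta_1],[\chi\eta_2]\}=\{[\eta_1],[\eta_2]\}$) is symmetric under the interchange $\eta_1\leftrightarrow\eta_2$, so no manipulation of these alone can break the tie. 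The resolution must use finer data, which I would take from the identification of the $M$-$M$ category with $\Rep(M_{10})$: here $\chi$ is the sign character of $M_{10}/A_6\cong\Z_2$, and Clifford theory for $A_6\triangleleft M_{10}$ says a ten-dimensional irreducible is $\chi$-fixed precisely when it is induced from $A_6$. Since $A_6$ has a single degree-$10$ irreducible (which is outer-stable and extends to the pair $\eta_1,\eta_2$, necessarily interchanged by $\chi$) and two degree-$5$ irreducibles (interchanged by the outer automorphism, with common induction the unique $\chi$-fixed ten-dimensional sector $\xi$), at most one ten-dimensional simple can be $\chi$-fixed. As $\xi$ already is, $\eta_1$ and $\eta_2$ cannot be, giving $[\chi\eta_1]=[\eta_2]$ and feeding the reduction above; concretely this is the statement that $\#\{\lambda\in\widehat{M_{10}}:\chi\lambda\cong\lambda\}=2$ rather than $4$, which is exactly what separates the two scenarios.
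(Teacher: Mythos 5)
Your opening moves are sound: the derivation of $[\xi\chi]=[\xi]$ from $[\xi\pi]=[\xi]\oplus\Sigma$, $[\Sigma\chi]=[\Sigma]$ and the way $\pi$ acts on $\{\xi,\eta_1,\eta_2\}$ is correct (the paper gets the same fact more quickly, by noting that $[\xi\chi]=[\eta_1]$ would force $[\chi\eta_1]=[\eta_1]$, $[\chi\eta_2]=[\eta_2]$ and contradict $\{[\overline{\xi}],[\overline{\eta_1}],[\overline{\eta_2}]\}=\{[\xi],[\eta_1],[\eta_2]\}$), and your reduction of the whole lemma to the single claim that $\chi$ does not fix $\eta_1$ matches the paper's structure. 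The warm-up about $\epsilon_2,\epsilon_3$ is also fine, though it is not needed for this lemma (the paper proves $[\chi\epsilon_2]=[\epsilon_3]$ only in the subsequent lemma, by a different route through the two candidates for $\epsilon_2\overline{\epsilon_2}$).

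The decisive step, however, has a genuine gap: you rule out $[\chi\eta_1]=[\eta_1]$ by identifying the even part with $\Rep(M_{10})$ and invoking Clifford theory for $\fA_6\triangleleft M_{10}$. That identification is not available here and is essentially what the theorem is proving. The hypothesis of Theorem \ref{dualM10-M9} is only that the \emph{dual principal graph} of an abstract subfactor $M\supset N$ equals $\cG_{M_9}^{M_{10}}$; the graph gives dimensions and the fusion with $\epsilon$, but not the fusion ring of the $M$-$M$ sectors, so you cannot assert that $\chi$ is ``the sign character of $M_{10}/\fA_6$'' or count $\chi$-fixed simples via induction from $\fA_6$ without circularity. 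Moreover, your meta-claim that the graph data is $\eta_1\leftrightarrow\eta_2$-symmetric and hence ``no manipulation of these alone can break the tie'' misdiagnoses the problem: the task is not to distinguish $\eta_1$ from $\eta_2$ (one may relabel), but to show the scenario in which $\chi$ fixes \emph{both} is inconsistent, and that is decidable from the graph data. The paper does exactly this: assuming $[\eta_1\chi]=[\eta_1]$ (hence $\chi$ fixes all of $\xi,\eta_1,\eta_2$ on both sides), it uses the multiplicities $\dim(\overline{\xi}\xi,\pi)=2$, $\dim(\overline{\eta_i}\eta_i,\pi)=1$, $\dim(\overline{\eta_1}\eta_2,\pi)=2$, $\dim(\overline{\eta_i}\zeta,\pi)=2$ (Frobenius reciprocity from the graph) together with $[\chi\overline{\xi}]=[\overline{\xi}]$ and dimension counting to pin down $[\overline{\xi}\xi]$, $[\overline{\eta_1}\eta_1]$, $[\overline{\eta_1}\eta_2]$, $[\eta_1\zeta]$ and $[\overline{\eta_i}\xi]$ explicitly, and then checks that none of the three possible contragredient involutions on $\{\xi,\eta_1,\eta_2\}$ admits fusion rules consistent with these decompositions. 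Some argument of this combinatorial kind is indispensable; as written, your proof assumes its conclusion at the only point where real work is required.
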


\begin{proof} Note that we have $[\chi\xi]=[\xi]$. 
First we claim $[\xi\chi]=[\xi]$. 
Indeed, assume that it is not the case. 
Then we may assume $[\xi\chi]=[\eta_1]$, which implies 
$$[\chi\eta_1]=[\chi\xi\chi]=[\xi\chi]=[\eta_1],$$
and so $[\chi\eta_2]=[\eta_2]$. 
Since $\{[\overline{\xi}],[\overline{\eta_1}],[\overline{\eta_2}]\}=\{[\xi],[\eta_1],[\eta_2]\},$
we get contradiction, and the claim holds.  
 
Now to prove the statement, it suffices to show $[\eta_2\chi]=[\eta_3]$. 
For this, we assume $[\eta_1\chi]=[\eta_1]$ (and consequently $[\eta_2\chi]=[\eta_2]$), and will deduce contradiction. 
Taking conjugate, we also have $[\chi\eta_1]=[\eta_1]$ and $[\chi\eta_2]=[\eta_2]$ in this case. 
Then since $[\overline{\xi}\xi]$ contains $\pi$ with multiplicity 2 and $[\chi\overline{\xi}]=[\overline{\xi}]$, 
it contains $[\chi\pi]$ with multiplicity 2, and so dimension counting shows 
\begin{equation}\label{bxx}
[\overline{\xi}\xi]=[\id]\oplus [\chi]\oplus 2[\pi]\oplus 2[\chi\pi]\oplus 2[\zeta]\oplus  3\times 10\textrm{ dim},
\end{equation}
where $3\times 10\textrm{ dim}$ means a direct sum of 3 elements from $\{\xi,\eta_1,\eta_2\}$. 
In the same way, we get 
$$[\overline{\eta_1}\eta_1]=[\id]\oplus [\chi]\oplus [\pi]\oplus [\chi\pi]\oplus 80\textrm{ dim},$$
where the last part is decomposed as either $80=5\times 16$ or $80=8\times 10$. 
Also, we get 
$$[\overline{\eta_1}\eta_2]=[\overline{\eta_2}\eta_1]=2[\pi]\oplus 2[\chi\pi]\oplus 4[\zeta].$$
This implies 
\begin{align}\label{eex}
\lefteqn{0=\dim(\overline{\eta_1}\eta_2,\xi)=\dim(\overline{\eta_1}\eta_2,\eta_1)=\dim(\overline{\eta_1}\eta_2,\eta_2)} \\
 &=\dim(\overline{\eta_2}\eta_1,\xi)=\dim(\overline{\eta_2}\eta_1,\eta_1)=\dim(\overline{\eta_2}\eta_1,\eta_2). \nonumber
\end{align}
Also, the Frobenius reciprocity implies 
$$d(\eta_1\zeta,\eta_2)=4.$$
Since $[\zeta\chi]=[\zeta]$, Eq.(\ref{pi3}) shows 
$$\dim(\eta_1\zeta,\chi\pi)=\dim(\eta_1\zeta,\pi\chi)=\dim(\eta_1\zeta,\pi)=2,$$ 
and 
$$[\eta_1\zeta]=2[\pi]\oplus 2[\chi\pi]\oplus 4[\zeta]\oplus 4[\eta_2]\oplus 2\times 10\textrm{ dim}.$$
Since $\dim (\eta_1,\eta_1\zeta)=\dim(\overline{\eta_1}\eta_1,\zeta)$ is either $5$ or $0$, we get 
$$[\eta_1\zeta]=2[\pi]\oplus 2[\chi\pi]\oplus 4[\zeta]\oplus 4[\eta_2]\oplus 2[\xi],$$
and
\begin{equation}\label{bee}
[\overline{\eta_1}\eta_1]=[\id]\oplus [\chi]\oplus [\pi]\oplus [\chi\pi]\oplus 8\times 10\textrm{ dim}.
\end{equation}
A similar reasoning shows 
 \begin{equation}\label{bex}
[\overline{\eta_i}\xi]=[\pi]\oplus [\chi\pi]\oplus 2[\zeta]\oplus 5\times 10 \textrm{ dim},
\end{equation}

For the contragredient map, we have the following 3 possibilities up to relabeling $\eta_1$ and $\eta_2$: 
\begin{itemize}
\item[(i)] $[\overline{\xi}]=[\xi]$, $[\overline{\eta_1}]=[\eta_1]$, $[\overline{\eta_2}]=[\eta_2]$, 
\item[(ii)] $[\overline{\xi}]=[\xi]$, $[\overline{\eta_1}]=[\eta_2]$, $[\overline{\eta_2}]=[\eta_1]$, 
\item[(iii)] $[\overline{\xi}]=[\eta_1]$, $[\overline{\eta_1}]=[\xi]$, $[\overline{\eta_2}]=[\eta_2]$, 
\end{itemize}
However, direct computation shows that there are no fusion rules consistent with Eq.(\ref{bxx}),(\ref{eex}),(\ref{bee}), 
and (\ref{bex}) in each case. 
\end{proof}

\begin{lemma}\label{e2be2} With the above notation, 
$$[\chi\epsilon_2]=[\epsilon_3],$$
$$[\epsilon_2\overline{\epsilon_2}]=[\id]\oplus [\pi],$$
$$[\pi\epsilon_2]=[\epsilon_0]\oplus [\epsilon_2],\quad [\pi\epsilon_3]=[\epsilon_0]\oplus [\epsilon_3],$$
$$[\pi\epsilon_4]=2[\epsilon_0]\oplus [\epsilon_4],$$
$$[\pi\epsilon_0]=[\epsilon]\oplus[\chi\epsilon]\oplus [\epsilon_2]\oplus[\epsilon_3]\oplus 2[\epsilon_4]\oplus 8[\epsilon_0].$$
\end{lemma}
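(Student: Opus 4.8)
The plan is to argue entirely with the $M$-$M$ and $M$-$N$ sectors, using the adjacencies of $\cG_{M_9}^{M_{10}}$, the dimensions recorded above, Frobenius reciprocity and associativity. I would first determine the action of the invertible $\chi$. Since $\epsilon_0$ and $\epsilon_4$ are the unique $M$-$N$ sectors of dimension $8\sqrt{10}$ and $2\sqrt{10}$ and tensoring by $\chi$ preserves dimension and irreducibility, $[\chi\epsilon_0]=[\epsilon_0]$ and $[\chi\epsilon_4]=[\epsilon_4]$; comparing $[(\chi\epsilon)\bepsilon]=[\chi]\oplus[\chi\pi]$ with the only possible values $[\id]\oplus[\pi]$ and $[\xi]$ shows that $\chi$ interchanges $\epsilon$ and $\chi\epsilon$. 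For the first identity, $[\chi\epsilon_2]$ is one of the four $\sqrt{10}$-dimensional $M$-$N$ sectors; $[(\chi\epsilon_2)\bepsilon]=[\chi\xi]=[\xi]$ rules out $\epsilon$ and $\chi\epsilon$, and because $\epsilon_2\overline{\epsilon_2}$ contains $\id$ and has dimension $10$ it equals $[\id]\oplus[Y]$ with $d(Y)=9$, so $\dim(\chi,\epsilon_2\overline{\epsilon_2})=0$ excludes $\epsilon_2$. Hence $[\chi\epsilon_2]=[\epsilon_3]$, which is the first identity and holds irrespective of whether $Y=\pi$ or $Y=\chi\pi$.

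I would then compute the left action of $\pi$. Expanding $\Sigma=[\pi]\oplus[\chi\pi]\oplus[\xi]\oplus[\eta_1]\oplus[\eta_2]\oplus 2[\zeta]$ and applying it to $\epsilon$ through the graph, using $[\chi\epsilon_0]=[\epsilon_0]$, gives $[\Sigma\epsilon]=[\epsilon]\oplus[\chi\epsilon]\oplus[\epsilon_2]\oplus[\epsilon_3]\oplus 2[\epsilon_4]\oplus 9[\epsilon_0]$. Computing $[\pi^2\epsilon]$ in two ways from $[\pi^2]=[\id]\oplus\Sigma$ and $[\pi\epsilon]=[\epsilon]\oplus[\epsilon_0]$ isolates $[\pi\epsilon_0]=[\epsilon]\oplus[\chi\epsilon]\oplus[\epsilon_2]\oplus[\epsilon_3]\oplus 2[\epsilon_4]\oplus 8[\epsilon_0]$, the last identity. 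Since $\pi$ is self-conjugate the matrix of multiplicities $\dim(\pi\delta,\delta')$ over the $M$-$N$ sectors is symmetric, so the $\epsilon_0$-multiplicity in each $\pi\delta$ equals the $\delta$-multiplicity in $\pi\epsilon_0$; together with the dimension constraint on $\pi\epsilon_2,\pi\epsilon_3$ (each of dimension $9\sqrt{10}$, hence one $\epsilon_0$ and a single further $\sqrt{10}$-sector, leaving no room for an $\epsilon_4$-component) this forces, again by symmetry against the computed rows, $[\pi\epsilon_4]=[\epsilon_4]\oplus 2[\epsilon_0]$ and reduces the remaining assertions to a single binary alternative: either $Y=\pi$, with $[\pi\epsilon_2]=[\epsilon_0]\oplus[\epsilon_2]$ and $[\pi\epsilon_3]=[\epsilon_0]\oplus[\epsilon_3]$, or $Y=\chi\pi$, with $\epsilon_2$ and $\epsilon_3$ exchanged.

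The crux, which I expect to be the main obstacle, is excluding $Y=\chi\pi$. It is clarifying to pass to the two-dimensional ``sign'' submodule of the $M$-$N$ fusion module, namely the $(-1)$-eigenspace of the $\chi$-action (note that $\chi$ commutes with every simple object), spanned by $u=[\epsilon]-[\chi\epsilon]$ and $w=[\epsilon_2]-[\epsilon_3]$. On it $\chi$ acts as the scalar $-1$; consequently $\xi$ and $\zeta$ act as $0$, $\eta_2$ acts as $-\eta_1$, and $\Sigma$ acts as $0$, while $\pi$ acts as $\mathrm{diag}(1,\lambda)$ with $\lambda=+1$ when $Y=\pi$ and $\lambda=-1$ when $Y=\chi\pi$. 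The difficulty is that every relation used so far is consistent for both signs: no right multiplication by $\bepsilon$ can separate the two since $[\epsilon_2\bepsilon]=[\epsilon_3\bepsilon]=[\xi]$, and $[\pi^2]=[\id]\oplus\Sigma$, $[\xi\pi]=[\xi]\oplus\Sigma$ and $[\eta_i\pi]$ all hold verbatim for $\lambda=\pm1$. Breaking the tie therefore requires one genuinely new relation sensitive to $\lambda$; I would obtain it by pushing the $M$-$M$ fusion ring further, determining the contragredient of $\eta_1$ and the products $[\eta_1\eta_2]$, $[\eta_1^2]$ and $[\zeta^2]$ from associativity and the symmetric action matrices, then checking that associativity of the whole module closes only for $\lambda=+1$. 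A cleaner route, and the one I would try first, is to descend to the inclusion $N\supset P$, which by Theorem \ref{GthF} is rigidly the subfactor attached to $(S(3^2),\F_{3^2})$: writing $\epsilon_2=\epsilon\nu$ for a suitable $N$-$N$ sector $\nu$ and evaluating $\dim(\pi,\epsilon_2\overline{\epsilon_2})=\dim(\bepsilon\pi\epsilon,\nu\overline{\nu})$ on that rigid $N$-$N$ side should return the value $1$, forcing $Y=\pi$. Once $Y=\chi\pi$ is excluded, the second and third displayed identities are immediate, completing the proof.
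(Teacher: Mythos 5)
Your reduction of the lemma to the dichotomy $[\epsilon_2\overline{\epsilon_2}]=[\id]\oplus[Y]$ with $Y\in\{\pi,\chi\pi\}$ is sound, and where it deviates from the paper it does so harmlessly: you obtain $[\pi\epsilon_0]$ from computing $[\pi^2\epsilon]$ two ways, where the paper uses $2[\pi\epsilon_0]=[\pi\zeta\epsilon]$; you get $[\pi\epsilon_4]$ from symmetry of the $\pi$-multiplicity matrix, where the paper reads it off $[\pi\epsilon_0]$ by Frobenius reciprocity; the first identity is argued essentially as in the paper. But the proof is incomplete at exactly the point you flag, the exclusion of $Y=\chi\pi$, and neither of your proposed tiebreakers repairs it. The ``cleaner route'' is circular in this setting: Lemma \ref{e2be2} sits inside the proof of Theorem \ref{dualM10-M9}, whose only hypothesis is $\cG_{M\supset N}^d=\cG_{M_9}^{M_{10}}$, so nothing about the $N$-$N$ category, the intermediate subfactor $P$, or the identification of $N\supset P$ with the $(S(3^2),\F_{3^2})$ subfactor via Theorem \ref{GthF} is available yet. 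Those facts are produced downstream: in the proof of Theorem \ref{dualM10-M9} one obtains $\alpha_2\in\Aut(N)$ with $[\epsilon_2]=[\epsilon\alpha_2]$ precisely from $\dim(\bepsilon\epsilon_2,\bepsilon\epsilon_2)=2$, which is computed using $[\epsilon_2\overline{\epsilon_2}]=[\id]\oplus[\pi]$, i.e.\ from this very lemma; your factorization $[\epsilon_2]=[\epsilon\nu]$ (which forces $d(\nu)=1$) is likewise not yet known. Your first route, ``check that associativity of the whole module closes only for $\lambda=+1$,'' is a program rather than an argument, so the crux is left unproved.

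Moreover, your diagnosis that breaking the tie ``requires one genuinely new relation'' in the $M$-$M$ fusion ring is mistaken: the paper breaks it with relations already in hand, exploiting positivity and integrality of the module decomposition, which your sign-eigenspace bookkeeping discards. Concretely, assume $[\epsilon_2\overline{\epsilon_2}]=[\id]\oplus[\chi\pi]$. Using $[\eta_1\chi]=[\eta_2]$ from the preceding lemma (the same asymmetry that makes $\eta_2$ act as $-\eta_1$ on your eigenspace) and $[\eta_2\pi]=[\eta_1]\oplus\Sigma$, one gets $\dim(\eta_1\epsilon_2,\eta_1\epsilon_2)=\dim(\eta_1,\eta_1(\id\oplus\chi\pi))=1+\dim(\eta_1,\eta_2\pi)=3$. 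Since $d(\eta_1\epsilon_2)=10\sqrt{10}$, integrality forces $[\eta_1\epsilon_2]=[\epsilon_0]$ plus two distinct $\sqrt{10}$-dimensional sectors, but Frobenius reciprocity excludes all four candidates: $\dim(\eta_1\epsilon_2,\epsilon)=\dim(\eta_1,\overline{\xi})=0$, $\dim(\eta_1\epsilon_2,\chi\epsilon)=\dim(\eta_2,\overline{\xi})=0$, and $\dim(\eta_1\epsilon_2,\epsilon_2)=\dim(\eta_1,\id\oplus\chi\pi)=0=\dim(\eta_1\epsilon_2,\epsilon_3)$; contradiction. Note that $\dim(\eta_1\epsilon_2,\eta_1\epsilon_2)$ is sensitive to your $\lambda$ (it equals $2$ when $Y=\pi$, with $[\eta_1\epsilon_2]=[\epsilon_0]\oplus[\epsilon_4]$) even though every fusion relation it invokes holds verbatim for $\lambda=\pm1$: what varies is the norm count, which lives outside the linear algebra of the $(-1)$-eigenspace. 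Replacing both of your proposed tiebreakers with this norm computation completes your proof along the paper's lines.
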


\begin{proof} Since $d(\epsilon_2\overline{\epsilon_2})=10$, and $\epsilon_2\overline{\epsilon_2}$ contains $\id$, we have only 
the following two possibilities:
$$[\epsilon_2\overline{\epsilon_2}]=[\id]\oplus[\pi],$$
$$[\epsilon_2\overline{\epsilon_2}]=[\id]\oplus[\chi\pi].$$
Since $\epsilon_2\overline{\epsilon_2}$ does not contain $\chi$ in any case, we have $[\chi\epsilon_2]\neq [\epsilon_2]$, 
and so $[\chi\epsilon_2]=[\epsilon_3]$. 

Assume that $[\epsilon_2\overline{\epsilon_2}]=[\id]\oplus[\chi\pi]$ holds. 
Then 
$$\dim(\eta_1\epsilon_2,\eta_1\epsilon_2)=\dim(\eta_1,\eta_1\epsilon_2\overline{\epsilon_2})=\dim(\eta_1,\eta_1(\id\oplus\chi\pi))
=1+\dim(\eta_1,\eta_2\pi)=3.$$
Since $d(\eta_1\epsilon_2)=10\sqrt{10}$, we have 
$$[\eta_1\epsilon_2]=[\epsilon_0]\oplus 2\times \sqrt{10} \textrm{ dim}.$$
However, we have 
$$\dim(\eta_1\epsilon_2,\epsilon)=\dim(\eta_1,\epsilon\overline{\epsilon_2})=\dim(\eta_1,\overline{\xi})=\dim(\eta_1,\xi)=0,$$
$$\dim(\eta_1\epsilon_2,\chi\epsilon)=\dim(\eta_2\epsilon_2,\epsilon)=\dim(\eta_2,\epsilon\overline{\epsilon_2})=\dim(\eta_2,\overline{\xi})
=\dim(\eta_2,\xi)=0,$$
$$\dim(\eta_1\epsilon_2,\epsilon_2)=\dim(\eta_1,\id\oplus \chi\pi)=0.$$
$$\dim(\eta_1\epsilon_2,\epsilon_3)=\dim(\eta_1\epsilon_2,\chi\epsilon_2)=\dim(\eta_1,\chi\oplus \pi)=0,$$
and we get contradiction. 
Therefore we get $[\epsilon_2\overline{\epsilon_2}]=[\id]\oplus[\pi]$. 

The Frobenius reciprocity implies $\dim(\pi\epsilon_2,\epsilon_2)=1$. 
Since $d(\pi\epsilon_2)=9\sqrt{10}$, we get $[\pi\epsilon_2]=[\epsilon_2]\oplus [\epsilon_0]$, and  
$[\pi\epsilon_3]=[\epsilon_3]\oplus [\epsilon_0]$ in the same way.  

By associativity,
\begin{align*}
\lefteqn{2[\pi\epsilon_0]=[\pi\zeta\epsilon]=[\overline{\zeta\pi}\epsilon]} \\
 &=[(2\pi\oplus 2\chi\pi\oplus 2\xi\oplus 2\eta_1\oplus 2\eta_2\oplus 3\zeta)\epsilon] \\
 &=2([\epsilon]\oplus [\epsilon_0])\oplus 2([\chi\epsilon]\oplus [\epsilon_0])
 \oplus 2([\epsilon_2]\oplus [\epsilon_3]\oplus [\epsilon_0])\\
 &\oplus 2([\epsilon_4]\oplus [\epsilon_0])\oplus 2([\epsilon_4]\oplus [\epsilon_0]) \oplus 6[\epsilon_0],\\
\end{align*}
which shows the last equation. 
The Frobenius reciprocity together with the equations obtained so far implies the fourth one. 
\end{proof}

\begin{proof}[Proof of Theorem \ref{dualM10-M9}] 
It suffices to show $\cG_{M\supset N}=\cG_{M_{10}>M_9}$ (which is $\widetilde{\cG_{(1^42),1})}$. 
Let $[\bepsilon\epsilon]=[\id]\oplus [\sigma]$ be the irreducible decomposition. 
Since 
$$[\epsilon\bepsilon\epsilon]=[(\id\oplus \pi)\epsilon]=2[\epsilon]\oplus [\epsilon_0],$$
we get $[\epsilon\sigma]=[\epsilon]\oplus [\epsilon_0]$. 

Since 
$$\dim(\bepsilon\chi\epsilon,\bepsilon\chi\epsilon)=\dim (\epsilon\bepsilon\chi,\chi\epsilon\bepsilon)
=\dim (\chi\oplus  \pi\chi,\chi\oplus \chi\pi)=2,$$
the sector $\bepsilon\chi\epsilon$ is decomposed into two distinct irreducibles. 
Since $d(\bepsilon\chi\epsilon)=10$ and 
$$[\epsilon\bepsilon\chi\epsilon]=[(\id\oplus \pi)\chi\epsilon]=[\chi\epsilon]\oplus [\chi][\pi\epsilon]=
2[\chi\epsilon]\oplus [\mu_0],$$
one of the irreducible components of $\bepsilon\chi\epsilon$ is an automorphism of $N$, say $\alpha_1$, 
and the Frobenius reciprocity implies $[\chi\epsilon]=[\epsilon\alpha_1]$. 
Thus 
$$[\bepsilon\chi\epsilon]=[\sigma\alpha_1]\oplus [\alpha_1],$$
and $[\epsilon\sigma\alpha_1]=[\chi\epsilon]\oplus [\epsilon_0]$. 
Since 
$$[\epsilon\sigma\alpha_1]=[(\epsilon\oplus \epsilon_0)\alpha_1],$$
we get $[\epsilon_0][\alpha_1]=[\epsilon_0]$.

In the same way, Lemma \ref{e2be2} implies 
$$\dim(\bepsilon\epsilon_2,\bepsilon\epsilon_2)=(\epsilon\bepsilon,\epsilon_2\overline{\epsilon_2})
=\dim(\id\oplus \pi,\id\oplus \pi)=2,$$
and there exists $\alpha_2\in \Aut(N)$ satisfying $[\epsilon_2]=[\epsilon\alpha_2]$, and 
$$[\bepsilon\epsilon_2]=[\sigma\alpha_2]\oplus [\alpha_2].$$
Letting $[\alpha_3]=[\alpha_1\alpha_2]$, we get 
$$[\epsilon_3]=[\chi\epsilon_2]=[\chi\epsilon\alpha_2]=[\epsilon\alpha_1\alpha_2]=[\epsilon\alpha_3],$$
and 
$$[\bepsilon\epsilon_3]=[\sigma\epsilon_3]\oplus [\alpha_3].$$
Since 
$$[\epsilon\bepsilon\epsilon_2]=[(\id\oplus \pi)\epsilon_2]=2[\epsilon_2]\oplus [\epsilon_0],$$
we get $[\epsilon\sigma\alpha_2]=[\epsilon_2]\oplus [\epsilon_0]$. 
Since 
$$[\epsilon\sigma\alpha_2]=[(\epsilon\oplus\epsilon_0)\alpha_2]=[\epsilon\alpha_2]\oplus [\epsilon_0\alpha_2],$$
we get $[\epsilon_0\alpha_2]=[\epsilon_0]$, 
and  $[\epsilon_0\alpha_3]=[\epsilon_0]$ too. 

Lemma \ref{e2be2} implies
$$\dim(\bepsilon\epsilon_4,\bepsilon\epsilon_4)=\dim(\epsilon_4,\epsilon\bepsilon\epsilon_4)=(\epsilon_4,(\id\oplus \pi)\epsilon_4)
=1+(\epsilon_4,\pi\epsilon_4)=2,$$ 
and $\bepsilon\epsilon_4$ is decomposed into two distinct irreducibles, say $\heta_1$ and $\heta_2$.  
On the other hand, we have
$$[\epsilon\bepsilon\epsilon_4]=[(\id\oplus \pi)\epsilon_4]=2[\epsilon_4]\oplus 2[\epsilon_0].$$
Thus there are the following two possibilities: 
\begin{itemize}
\item[(i)] $[\epsilon\heta_1]=[\epsilon\heta_2]=[\epsilon_4]\oplus [\epsilon_0]$. 
\item[(ii)] $[\epsilon\heta_1]=[\epsilon_4]\oplus 2[\epsilon_0]$ and $[\epsilon\heta_2]=[\epsilon_4]$. 
\end{itemize}

Assume that the case (i) occurs. 
Then $d(\heta_1)=d(\heta_2)=10$. 
Lemma \ref{e2be2} implies 
$$\dim(\bepsilon\epsilon_0,\bepsilon\epsilon_0)=(\epsilon_0,\epsilon\bepsilon\epsilon_0)=1+\dim(\epsilon_0,\pi\epsilon_0)=9.$$
Thus the Frobenius reciprocity together with the fusion rules obtained so far shows that there exists distinct irreducibles 
$\rho_1,\rho_2,\rho_3$  with $d(\rho_1)=d(\rho_2)=d(\rho_3)=8$ satisfying 
$$[\bepsilon\epsilon_0]=\bigoplus_{i=0}^3[\sigma\alpha_i]\oplus 
[\heta_1]\oplus[\heta_2]\oplus [\rho_1]\oplus [\rho_2]\oplus [\rho_3],$$
$$[\epsilon\rho_1]=[\epsilon\rho_2]=[\epsilon\rho_3]=[\epsilon_0],$$
where $\alpha_0=\id$. 
For the fusion category $\cC$ generated by $\bepsilon\epsilon$, we have 
$$\Irr(\cC)=\{[\alpha_i]\}_{i=0}^4\sqcup\{[\sigma\alpha_i]\}_{i=0}^3\sqcup\{[\heta_1],[\heta_2],[\rho_1],[\rho_2],[\rho_3]\}.$$
Let $\Lambda=\{[\alpha_i]\}_{i=0}^4$, which forms a group of order 4. 
Then the $\Lambda$-action on $\{[\rho_1],[\rho_2],[\rho_3]\}$ by left multiplication has a fixed point, 
and we may assume that it is $[\rho_1]$. 
Thus there exists an intermediate subfactor of index 4 between $N\supset \rho_1(N)$, and $\rho_1$ factorizes as 
$\rho_1=\mu_1\mu_2$ with $d(\mu_1)=2$, $d(\mu_2)=4$. 
Since $\overline{\mu_2}\mu_2$ is contained in $\overline{\rho_1}\rho_1$, it belongs to $\cC$. 
However, we have $d(\bar{\mu}\mu)=16$, and $\bar{\mu}\mu$ contains either 1,2 or 4 automorphisms, which is impossible
because $d(\sigma\alpha_i)=9$, $d(\heta_i)=10$, and $d(\rho_i)=8$. 
Therefore (i) never occurs. 

Now we are left with the case (ii). 
In this case, we have $d(\heta_1)=2$, and 
$$[\bepsilon\epsilon_4]=[\bepsilon\epsilon\heta_2]=[(\id\oplus \sigma)\heta_2],$$
implies $[\heta_1]=[\sigma\heta_2]$. 
The Frobenius reciprocity and $\dim(\bepsilon\epsilon_0,\bepsilon\epsilon_0)=9$ imply that 
there exists an irreducible $\rho$ satisfying 
$$[\bepsilon\epsilon_0]=\bigoplus_{i=0}^3[\sigma\alpha_i]\oplus 
2[\sigma\heta_2]\oplus [\rho],$$
$$[\epsilon\rho]=[\epsilon_0],$$
which shows $\cG_{M\supset N}=\cG_{(1^42),1}$. 
\end{proof}
%%%%%%%%%%%%%%%%%%%%%%%%%%%%%%%%%%%%%%%%%%%%%%%%%%%%%%%%%%%%%%%%%%%%%%%%%%%%%%%%%%%%%%%%%%%%%%%%%%%%%%%%%%%%%%%%%%%%%%%%%%%%%%%%%%
%%%%%%%%%%%%%%%%%%%%%%%%%%%%%%%%%%%%%%%%%%%%%%%%%%%%%%%%%%%%%%%%%%%%%%%%%%%%%%%%%%%%%%%%%%%%%%%%%%%%%%%%%%%%%%%%%%%%%%%%%%%%%%%%%%
%%%%%%%%%%%%%%%%%%%%%%%%%%%%%%%%%%%%%%%%%%%%%%%%%%%%%%%%%%%%%%%%%%%%%%%%%%%%%%%%%%%%%%%%%%%%%%%%%%%%%%%%%%%%%%%%%%%%%%%%%%%%%%%%%%
\section{Goldman-type theorems for $(PSL_2(q),PG_1(q))$}

\begin{theorem} Let $M\supset N$ be a finite index subfactor with $\cG_{M\supset N}=\widetilde{\cG_{(1^m),2}}$. 
Then $q=1+2m$ is an odd prime power and there exists a subfactor $R\subset N$ up to inner conjugacy such that 
$R$ is irreducible in $M$ and 
$$M=R\rtimes PSL_2(q)\supset N=R\rtimes \Lambda,$$
where 
$$\Lambda=\{\left(
\begin{array}{cc}
a &b  \\
0 &a^{-1} 
\end{array}
\right)
;\; a\in \F_q^\times,\; b\in \F_q\}/\{\pm 1\}.$$

\begin{figure}[H]
\centering
\begin{tikzpicture}
\draw (-2,1)--(-1,1)--(0,1)--(1,0.5)--(2,0.5);
\draw (-2,-1)--(-1,-1)--(0,-1)--(1,-0.5)--(2,-0.5);
\draw (-2,0)--(-1,0)--(0,0)--(1,0.5)--(0,-1);
\draw (0,0)--(1,-0.5)--(0,1);
\draw(-2,1)node{$*$};
\draw(-1,1)node{$\bullet$};
\draw(0,1)node{$\bullet$};
\draw(1,0.5)node{$\bullet$};
\draw(2,0.5)node{$\bullet$};
\draw(-2,-1)node{$\bullet$};
\draw(-1,-1)node{$\bullet$};
\draw(0,-1)node{$\bullet$};
\draw(1,-0.5)node{$\bullet$};
\draw(2,-0.5)node{$\bullet$};
\draw(-2,0)node{$\bullet$};
\draw(-1,0)node{$\bullet$};
\draw(0,0)node{$\bullet$};
\draw(-2,1)node[above]{$\id_N$};
\draw(-1,1)node[above]{$\epsilon$};
\draw(0,1)node[above]{$\sigma$};
\draw(1,0.5)node[above]{$\epsilon\rho_1'$};
\draw(2,0.5)node[above]{$\rho_1'$};
\draw(-2,-1)node[below]{$\alpha'_2$};
\draw(-1,-1)node[below]{$\epsilon\alpha'_2$};
\draw(0,-1)node[below]{$\sigma\alpha'_2$};
\draw(1,-0.5)node[below]{$\epsilon\rho'_2$};
\draw(2,-0.5)node[below]{$\rho'_2$};
\draw(-2,0)node[above]{$\alpha'_1$};
\draw(-1,0)node[above]{$\epsilon\alpha'_1$};
\draw(0,0)node[above]{$\sigma\alpha'_1$};
\end{tikzpicture}
\caption{$\widetilde{\cG_{(1^3),2}}$}
\end{figure}

\end{theorem}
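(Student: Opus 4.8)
The plan is to follow the six-step reduction scheme of Subsection 2.5, taking $\Gamma=PSL_2(q)$ in its $2$-transitive action on $X=PG_1(q)$, the point stabilizer $G=\Lambda$ (the stabilizer of $\infty$), and the two-point stabilizer $H=C_m$ (the diagonal torus mod $\pm1$, stabilizing $0$ and $\infty$). Here $\Lambda\cong\F_q\rtimes C_{(q-1)/2}$ is a primitive Frobenius group whose complement $C_{(q-1)/2}$ acts on $\F_q\setminus\{0\}$ with exactly $n=2$ orbits (the squares and the non-squares), so $H=C_m$ with $m=(q-1)/2$ is cyclic and $\cG_{\Lambda>C_m}=\cG_{(1^m),2}$; consequently $\cG_{M\supset N}=\widetilde{\cG_{\Lambda>C_m}}=\widetilde{\cG_{(1^m),2}}$ as required. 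Thus the statement is the $n=2$ analogue of Theorem \ref{LM} (which is the $n=1$ case), the only structural change being that the relevant Frobenius pair $\Lambda>C_m$ now carries two $m$-dimensional simple objects $\rho_1',\rho_2'$ instead of one and is governed by part $(2)$ rather than part $(1)$ of Theorem \ref{GthF}. I write $\epsilon:N\hookrightarrow M$ for the given inclusion and aim to produce the chain $R\subset P\subset N\subset M$ with $P=R\rtimes C_m$.

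First I would carry out steps (1)--(3) combinatorially. Writing $[\bepsilon\epsilon]=[\id]\oplus[\sigma]$ with $d(\sigma)=q$, the shape of $\widetilde{\cG_{(1^m),2}}$ gives $\Irr(\cC)=\{\alpha_i'\}_{i\in I}\sqcup\{\sigma\alpha_i'\}_{i\in I}\sqcup\{\rho_1',\rho_2'\}$ with $d(\alpha_i')=1$ and $d(\rho_j')=m$, and exactly as in Theorem \ref{LM} one extracts a fusion subcategory $\cC_1$ with $\Irr(\cC_1)=\{\alpha_i'\}\sqcup\{\rho_1',\rho_2'\}$, deriving $\rho_j'\alpha_i'=\rho_j'$, $[\sigma^2]=[\id]\oplus[\rho_1']\oplus[\rho_2']\oplus\bigoplus_i m_i[\sigma\alpha_i']$, and the products $\rho_j'\rho_k'$ by associativity and dimension counting; these reproduce the fusion rules of the representation category of the Frobenius group $\Lambda$. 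Feeding $\sigma^2$ and the self-conjugate, fusion-closed set $\{\id,\rho_1',\rho_2'\}$ into Theorem \ref{intermediate}(3) yields a unique intermediate subfactor $N\supset P\supset\sigma(N)$ with $[\iota\biota]=[\id]\oplus[\rho_1']\oplus[\rho_2']$ for $\iota:P\hookrightarrow N$, together with $\tau\in\Aut(P)$ satisfying $[\sigma]=[\iota\tau\biota]$ (as in Lemma \ref{factorization}). The fusion rules of $\cC_1$ show that the dual principal graph of $N\supset P$ is $\cG_{(1^m),2}$, and Theorem \ref{GthF}(2) then shows that the principal graph $\cG_{N>P}$ is $\cG_{(1^m),2}$ as well.

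Applying Theorem \ref{GthF}(2) to $N\supset P$ recovers a subfactor $R\subset P$, unique up to inner conjugacy, with $P\cap R'=\C$, an outer action $\beta$ of a primitive Frobenius group $\Lambda=K\rtimes C_m$ on $R$ with $N=R\rtimes_\beta\Lambda$ and $P=R\rtimes_\beta C_m$, and $K=\Z_p^k$ with $|K|=1+2m=q$; in particular $q$ is an (automatically odd) prime power. Writing $\kappa:R\hookrightarrow P$, the computation $\dim(\epsilon\iota\kappa,\epsilon\iota\kappa)=\dim(\bepsilon\epsilon,\iota\kappa\bkappa\biota)=1$ gives irreducibility of $R$ in $M$, and since $[M:R]=(q+1)\,q\,m=|PSL_2(q)|$ the depth-$2$ property of $M\supset R$ follows from the same dimension count as in step (5) of Theorem \ref{LM}, using that $\iota\kappa\bkappa\biota$ is the regular representation of $\Lambda$. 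For step (6), Lemma \ref{inv1} gives $\dim(\tau\biota\iota\kappa\bkappa\tau^{-1},\biota\iota\kappa\bkappa)=|C_m|=m$; since $[\biota\iota\kappa\bkappa]=\bigoplus_i\alpha_i\oplus m\rho_1\oplus m\rho_2$ and the $m$-dimensional blocks contribute in multiples of $m^2>m$ (for $m\geq2$), those blocks cannot overlap, forcing $[\tau\kappa\bkappa\tau^{-1}]=[\kappa\bkappa]$; as $C_m$ is cyclic the cohomological hypothesis of Lemma \ref{inv2} holds vacuously, so there is $\tau_1\in\Aut(R)$ with $[\tau\kappa]=[\kappa\tau_1]$.

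With (1)--(6) in hand, Lemma \ref{finishing} produces a finite group $\Gamma_0\supset\Lambda$ of index $q+1$ and an outer action of $\Gamma_0$ on $R$ extending $\beta$ with $M=R\rtimes\Gamma_0$, where $\Gamma_0$ acts $2$-transitively on $\Gamma_0/\Lambda$. It then remains to identify $\Gamma_0$ with $PSL_2(q)$, and this is the step I expect to be the main obstacle: $\Gamma_0$ is a $2$-transitive group of degree $q+1$ and order $q(q^2-1)/2$ whose point stabilizer is the solvable Frobenius group $\Z_p^k\rtimes C_{(q-1)/2}$ and whose two-point stabilizer is cyclic, and pinning it down requires invoking the classification of finite $2$-transitive permutation groups to exclude the affine-type and remaining almost-simple candidates and conclude $\Gamma_0\cong PSL_2(q)$ on $PG_1(q)$, after which $N=R\rtimes\Lambda$ with $\Lambda$ the stated Borel-type subgroup. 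A secondary delicate point is the bookkeeping in step (1): with two $m$-dimensional objects $\rho_1',\rho_2'$ one must track the involution on $\{1,2\}$ induced by conjugation and by the $\alpha_i'$-action when computing $\rho_j'\rho_k'$, but this is controlled by the Frobenius fusion rules already established in Section 3.
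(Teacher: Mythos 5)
Your proposal follows the paper's own six-step route almost exactly, but it has one genuine gap precisely at the point where the argument must depart from Theorem \ref{LM}. With two $m$-dimensional objects, extracting the fusion subcategory $\cC_1$ with $\Irr(\cC_1)=\{\alpha'_i\}_{i\in I}\sqcup\{\rho'_1,\rho'_2\}$ is \emph{not} ``exactly as in Theorem \ref{LM}'': since $d(\rho'_j\alpha'_i)=m$, these products automatically land in $\{\rho'_1,\rho'_2\}$, but the rule $[\rho'_j\alpha'_i]=[\rho'_j]$ you assert is exactly what need not hold a priori. The group $I$ of invertibles acts on the two-point set $\{[\rho'_1],[\rho'_2]\}$ by left (and right) multiplication, and the paper must treat two cases: the stabilizers $I_j$ equal $I$, or have index $2$ (so $m$ is even and $[\rho'_1]=[\alpha'_{i_0}\rho'_2]$ for some $i_0$). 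The real content of step (1) is then showing that $\rho'_{j_1}\overline{\rho'_{j_2}}$ contains no $\sigma\alpha'_i$, which the paper gets from the counting $m^2=m+(a_{j1}+a_{j2})m+b_j(2m+1)$ in the first case and $4l^2=l+2a_jl+b_j(4l+1)$ with $l=m/2$ in the second: coprimality forces $m\mid b_j$ (resp.\ $l\mid b_j$), and the size bound then gives $b_j=0$. Your write-up replaces this, the one genuinely new combinatorial argument of the section, by an appeal to Section 3. The gap is repairable rather than fatal, because nothing downstream actually needs $[\rho'_j\alpha'_i]=[\rho'_j]$: closure alone feeds Theorem \ref{intermediate}, and the computation of $\cG^d_{N\supset P}=\cG_{(1^m),2}$ survives both cases since $[\alpha'_i\rho'_1]\oplus[\alpha'_i\rho'_2]=[\rho'_1]\oplus[\rho'_2]$ regardless. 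Two smaller slips: your displayed $[\sigma^2]$ carries the wrong multiplicity --- the graph and $d(\sigma)=1+2m$ force $[\sigma^2]=[\id]\oplus[\rho'_1]\oplus[\rho'_2]\oplus 2\bigoplus_{i\in I}[\sigma\alpha'_i]$, not multiplicity one --- and you never dispatch $m=1$, where your ``$m^2>m$'' separation in step (6) fails; the paper treats it separately via $\widetilde{\cG_{(1),2}}=E_6^{(1)}=\cG_{(1^3),1}$ and $(\fA_4,X_4)\cong(PSL_2(3),PG_1(3))$, citing \cite{H95}.

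On the final identification of $\Gamma_0$ your route differs from the paper's, and in the opposite direction of what you feared. You propose invoking the full (CFSG-dependent) classification of finite $2$-transitive permutation groups and sieving candidates of degree $q+1$; the paper instead notes that the graph exhibits the $\Gamma$-action on $\Gamma/(K\rtimes H)$ as a $2$- but not $3$-transitive extension of the Frobenius action of $K\rtimes H$ on $(K\rtimes H)/H$, so that \cite[Chapter XI, Theorem 1.1]{HB} makes $\Gamma$ a Zassenhaus group; the order $(2m+2)(2m+1)m$ rules out the Suzuki groups, and the classical, pre-CFSG classification of Zassenhaus groups \cite[Chapter XI, Theorem 11.16]{HB} yields $\Gamma=PSL_2(q)$ at once. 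So the step you flag as ``the main obstacle'' is a short citation once the Zassenhaus structure is recognized; your heavier tool would also work, but the paper's choice is both more economical and logically lighter.
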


\begin{proof} Note that if $m=1$, we have $\widetilde{\cG_{(1),2}}=E_6^{(1)}=\cG_{(1^3),1}$, 
and the statement follows from \cite{H95} (or Theorem \ref{GthF}) as we have $(\fA_4,X_4)\cong (PSL_2(3),PG_1(3))$. 
We assume $m> 1$ in what follows. 

Let $\epsilon:N\hookrightarrow M$ be the inclusion map, and let $[\bepsilon\epsilon]=[\id]\oplus[\sigma]$ be
the irreducible decomposition. 
Let $\cC$ be the fusion category generated by $\bepsilon\epsilon$, and let $I$ be the group of (the equivalence classes of) 
the invertible objects in $\cC$.  
Then $|I|=m$. 

We can make the following parametrization of irreducible $N$-$N$ and $M$-$N$ sectors respectively:
$$\{\alpha'_i\}_{i\in I}\sqcup \{\sigma\alpha'_i\}_{i\in I}\sqcup\{\rho'_1,\rho'_2\},$$
$$\{\epsilon\alpha'_i\}_{i\in I}\sqcup \{\epsilon\rho'_1,\epsilon\rho'_2\},$$
with properties: 
$$d(\alpha'_i)=1,\quad d(\epsilon)=\sqrt{2+2m},\quad d(\sigma)=1+2m,\quad d(\rho'_1)=d(\rho'_2)=m,$$
$$[\bepsilon\epsilon]=[\id]\oplus[\sigma],$$
$$[\alpha'_{i_1}\alpha'_{i_2}]=[\alpha'_{i_1i_2}],$$
$$[\epsilon\sigma]=[\epsilon]\oplus [\epsilon\rho_1]\oplus [\epsilon\rho_2],$$
\begin{equation}\label{sr12}
[\sigma\rho'_1]=[\sigma\rho'_2]=\bigoplus_{i\in I}[\sigma\alpha'_i],
\end{equation}
\begin{equation}\label{ssr12}
[\sigma^2]=[\id]\oplus [\rho'_1]\oplus [\rho'_2]\oplus 2\bigoplus_{i\in I}[\sigma\alpha'_i].
\end{equation}
By definition of $I$, we have $[\overline{\alpha'_i}]=[\alpha'_{i^{-1}}]$. 
We can introduce another involution in $I$ by $[\overline{(\sigma\alpha'_i)}]=[\sigma\alpha'_{i^*}]$. 
We also introduce an involution in $\{1,2\}$ by $[\overline{\rho'_j}]=[\rho'_{\bar{j}}]$. 
Taking conjugation of Eq.(\ref{sr12}), we also have 
$$[\rho'_1\sigma]=[\rho'_2\sigma]=\bigoplus_{i\in I}[\sigma\alpha'_i].$$

We claim that there exists a fusion subcategory $\cC_1$ of $\cC$ satisfying 
$$\Irr(\cC_1)=\{\alpha'_i\}_{i\in I}\sqcup\{\rho_1,\rho_2\}.$$
Indeed, let 
$$I_j=\{i\in I;\;[\alpha'_i][\rho'_j]=[\rho'_j]\},$$
$$I_j'=\{i\in I;\;[\rho'_j][\alpha'_i]=[\rho'_j]\}.$$
Since the group $I$ acts on the 2 point set $\{[\rho'_1],[\rho'_2]\}$ by left (and also right) multiplication, 
we have the following two cases. 
\begin{itemize}
\item[(i)] $I_1=I_2=I$. In this case, we also have $I'_1=I'_2=I$ as $\{[\overline{\rho'_1}],\{[\overline{\rho'_2}]\}\}
=\{[\rho'_1],[\rho'_2]\}$.  
\item[(ii)] $|I_1|=|I_2|=m/2$. In this case, we also have $|I'_1|=|I'_2|=m/2$. 
\end{itemize}
Assume (i) occurs first. 
Then the Fobenius reciprocity implies 
$$[\rho'_j\overline{\rho'_j}]=\bigoplus_{i\in I}[\alpha'_i]\oplus a_{j1}[\rho'_1]\oplus a_{j2}[\rho'_2]\oplus 
\bigoplus_{i\in I}b_{ji}[\sigma\alpha'_i].$$
Let 
$$b_j=\sum_{i\in I}b_{ji}.$$
Then
$$m^2=m+(a_{j1}+a_{j2})m+b_j(2m+1),$$
and we see that $m$ divides $b_j$. 
If $b_j\geq m$, we would have $m\geq 2m+1$, which is contradiction. 
Thus $b_{ji}=0$ for all $i,j$. 
The Frobenius reciprocity shows that neither$[\rho'_1\overline{\rho'_2}]$ nor $[\rho'_2\overline{\rho'_1}]$ contain 
any automorphism, and a similar argument as above shows that $\rho'_1\overline{\rho'_2}$ and $\rho'_2\overline{\rho'_1}$ 
are also direct sums of sectors in $\{\alpha'_i\}\sqcup\{\rho'_1,\rho'_2\}$. 
This proves the claim in the case (i). 

Assume (ii) occurs now. 
Then $l=m/2$ is a natural number. 
A similar argument as above shows that for
$$a_j=\dim (\rho'_j\overline{\rho'_j},\rho'_1)+\dim (\rho'_j\overline{\rho'_j},\rho'_2),$$
$$b_j=\sum_{i\in I}\dim(\rho'_j\overline{\rho'_j},\sigma\alpha'_i),$$
we have 
$$4l^2=l+2a_jl+b_j(4l+1).$$ 
This shows that $l$ divides $b_j$, and so $b_j=0$. 
Note that there exists $i_0\in I$ satisfying $[\rho'_1]=[\alpha'_{i_0}\rho'_2]$, which implies 
$$[\rho'_1\overline{\rho'_2}]=[\rho'_1\overline{\rho'_1}{\alpha'_{i_0}}],\quad 
[\rho'_2\overline{\rho'_1}]=[\alpha'_{i_0^{-1}}\rho'_1\overline{\rho'_1}].$$
Therefore $\rho'_{j_1}\overline{\rho'_{j_2}}$, $1\leq j_1,j_2\leq 2$ are direct sums of sectors in 
$\{\alpha'_i\}\sqcup\{\rho'_1,\rho'_2\}$, 
which shows the claim in the case (ii). 

The rest of the proof is very much similar to that of Theorem \ref{LM}, and we briefly address it 
except for the last part deciding the group structure of $\Gamma$. 
Theorem \ref{intermediate} and Eq.(\ref{ssr12}) show that there exists a unique intermediate subfactor 
$P$ between $N$ and $\sigma(N)$ such that if we $\iota:P\hookrightarrow N$ denotes the inclusion map, we have 
$$[\iota\biota]=[\id]\oplus [\rho_1]\oplus [\rho_2].$$
Moreover, there exists $\tau\in \Aut(P)$ satisfying $[\sigma]=[\iota\tau\biota]$. 
The fusion rules of $\cC_1$ tell that the dual principal graph $\cG^d_{N\supset M}$ is $\cG_{(1^m),2}$, 
and Theorem \ref{GthF} shows that $\cG_{M\supset N}$ is also $\cG_{(1^m),2}$. 
The group $I$ is the cyclic group $\Z_m$ now. 
Let $\cC_2$ be the fusion category generated by $\biota\iota$. 
Then we can parametrize $\Irr(\cC_2)$ so that 
$$\Irr(\cC_2)=\{[\alpha_i]\}_{i\in I}\sqcup \{[\rho_1],[\rho_2]\},$$
$$[\iota\alpha_i]=[\alpha'_i\iota],$$
$$[\biota\iota]=[\id]\oplus [\rho_1]\oplus [\rho_2].$$

Applying Theorem \ref{GthF}, we see that there exists a unique subfactor $R\subset P$, up to inner conjugacy, that is 
irreducible in $M$ such that there exists a primitive Frobenius group $K\rtimes H$ with $|H|=m$, $|K|=1+2m$ and 
an outer action $\beta$ of it on $R$ satisfying 
$$N=R\rtimes_\beta(K\rtimes H)\supset P=R\rtimes_\beta H.$$
Note that the number $q=1+2m$ is an odd prime power $p^k$ and $K=\Z_p^k$, $H=\Z_m$. 
Moreover, there exists a group $\Gamma$ including $K\rtimes H$ such that $\beta$ extends to an outer action $\gamma$ 
of $\Gamma$ satisfying $M=R\rtimes_\gamma \Gamma$. 

From the graph $\cG_{M\supset N}$, we can see that the $\Gamma$-action on $\Gamma/(K\rtimes H)$ is a 2-transitive, 
but not 3-transitive, extension of the Frobenius group $K\rtimes H$ acting on $(K\rtimes H)/H$. 
Note that $|\Gamma|=(2m+2)(2m+1)m$. 
Thus \cite[Chapter XI, Theorem 1.1]{HB} shows that $\Gamma$ is a Zassenhaus group. 
The order of $\Gamma$ shows that it is not one of the Suzuki groups.  
Since $\Gamma$ is not 3-transitive, we conclude from \cite[Chapter XI, Theorem 11.16]{HB} that $\Gamma=PSL_2(q)$. 
\end{proof}
%%%%%%%%%%%%%%%%%%%%%%%%%%%%%%%%%%%%%%%%%%%%%%%%%%%%%%%%%%%%%%%%%%%%%%%%%%%%%%%%%%%%%%%%%%%%%%%%%%%%%%%%%%%%%%%%%%%%%%%%%%%%%%%%%%
%%%%%%%%%%%%%%%%%%%%%%%%%%%%%%%%%%%%%%%%%%%%%%%%%%%%%%%%%%%%%%%%%%%%%%%%%%%%%%%%%%%%%%%%%%%%%%%%%%%%%%%%%%%%%%%%%%%%%%%%%%%%%%%%%%
%%%%%%%%%%%%%%%%%%%%%%%%%%%%%%%%%%%%%%%%%%%%%%%%%%%%%%%%%%%%%%%%%%%%%%%%%%%%%%%%%%%%%%%%%%%%%%%%%%%%%%%%%%%%%%%%%%%%%%%%%%%%%%%%%%
\section{Goldman-type theorems for sharply 4-transitive permutation groups}
Since the finite depth subfactors of index 5 are completely classified in \cite{IMPP15}, 
the only point of the following theorem is to see how to find a subfactor $R$ and an $\fS_5$-action on it from the principal graph.  

\begin{theorem}\label{GTS5}
Let $L\supset M$ be a finite index inclusion of factors with $\cG_{L\supset M}=\cG_{(\fS_5,X_5)}$. 
Then there exists a unique subfactor $R\subset M$, up to inner conjugacy, such that $R'\cap L=\C$ and there exists 
an outer action $\gamma$ of $\fS_5$ on $R$ satisfying 
$$L=R\rtimes_\gamma\fS_5\supset M=R\rtimes_\gamma\fS_4.$$ 
\begin{figure}[H]
\centering
\begin{tikzpicture}
\draw (-4,0)--(-3,0)--(-2,0)--(-1,0)--(0,0)--(1,0)--(2,0)--(3,0)--(4,0);
\draw (-1,1)--(-1,0);
\draw (1,1)--(1,0);
\draw (0,2)--(0,1)--(0,0);
\draw(-4,0)node{$*$};
\draw(-3,0)node{$\bullet$};
\draw(-2,0)node{$\bullet$};
\draw(-1,0)node{$\bullet$};
\draw(0,0)node{$\bullet$};
\draw(1,0)node{$\bullet$};
\draw(2,0)node{$\bullet$};
\draw(3,0)node{$\bullet$};
\draw(4,0)node{$\bullet$};
\draw(-1,1)node{$\bullet$};
\draw(1,1)node{$\bullet$};
\draw(0,2)node{$\bullet$};
\draw(0,1)node{$\bullet$};
\draw(-4,0)node[below]{$\id_M$};
\draw(-3,0)node[below]{$\delta$};
\draw(-2,0)node[below]{$\lambda$};
\draw(-1,0)node[below]{$\delta\pi$};
\draw(-1,1)node[above]{$\pi$};
\draw(4,0)node[below]{$\chi$};
\draw(3,0)node[below]{$\delta\chi$};
\draw(2,0)node[below]{$\lambda\chi$};
\draw(1,0)node[below]{$\delta\pi\chi$};
\draw(1,1)node[above]{$\pi\chi$};
\draw(0,0)node[below]{$\lambda\xi$};
\draw(0,1)node[right]{$\delta\xi$};
\draw(0,2)node[above]{$\xi$};
\end{tikzpicture}
\caption{\label{S5-S4}$\cG_{(\fS_5,X_5)}$}
\end{figure}
\end{theorem}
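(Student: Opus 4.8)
The plan is to follow the six-step strategy of Subsection 2.5 for the chain $\fS_5>\fS_4>\fS_3$, taking $\Gamma=\fS_5$, $G=\fS_4$ a point stabilizer, and $H=\fS_3$ a two-point stabilizer; note that $\fS_3$ acts on the remaining three points without fixed points, so the hypothesis that the two-point stabilizer has no orbit of length one is met. Write $\epsilon\colon M\hookrightarrow L$ for the inclusion map. Since $\fS_5$ is doubly transitive of degree $5$ we have $d(\epsilon)=\sqrt5$ and $[\bepsilon\epsilon]=[\id]\oplus[\lambda]$ with $\lambda$ irreducible of dimension $4$. Let $\cC$ be the fusion category generated by $\bepsilon\epsilon$, whose simple objects are the even vertices of Figure \ref{S5-S4}.

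For Steps (1)--(3) I would first read the fusion rules among the even vertices off Figure \ref{S5-S4} by dimension counting. The eight even vertices split into the five ``small'' objects $\{\id_M,\chi,\pi,\pi\chi,\xi\}$, of dimensions $1,1,3,3,2$ with $\sum d^2=24=|\fS_4|$, and the three ``large'' objects $\{\lambda,\lambda\chi,\lambda\xi\}$ of dimensions $4,4,8$. A dimension argument shows the small objects are closed under fusion and hence form a fusion subcategory $\cC_1$, which one identifies with $\Rep(\fS_4)$ (with $\chi$ the sign, $\pi,\pi\chi$ the two $3$-dimensional irreducibles, $\xi$ the $2$-dimensional one). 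The object of $\cC_1$ corresponding to $\Ind_{\fS_3}^{\fS_4}\mathbf 1$ is $[\id]\oplus[\pi]$; using Theorem \ref{intermediate} I produce an intermediate subfactor $N$ with $\iota_2\colon N\hookrightarrow M$, $[\iota_2\bar{\iota_2}]=[\id]\oplus[\pi]$ and $[M:N]=4$, and then, exactly as in the general discussion preceding Lemma \ref{finishing}, a dimension match between $\lambda$ (of dimension $4=|X|-1$) and the objects $\iota_2\theta\bar{\iota_2}$ yields $\theta\in\Aut(N)$ with $[\bepsilon\epsilon]=[\id]\oplus[\iota_2\theta\bar{\iota_2}]$. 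Finally a combinatorial computation of the fusion rules gives $\cG_{M\supset N}=\cG_{(\fS_4,X_4)}=\widetilde{\cG_{(1^2),1}}$.

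For Steps (4)--(6), since $\fS_4=L(3)$ and $\fS_3=H(3)$, the graph $\widetilde{\cG_{(1^2),1}}$ is covered by Theorem \ref{LM} with $\bm=(1^2)$; applying it to $M\supset N$ yields a subfactor $R\subset N$, unique up to inner conjugacy, and an outer action $\gamma$ of $\fS_4$ with $M=R\rtimes_\gamma\fS_4$ and $N=R\rtimes_\gamma\fS_3$. Writing $\iota_3\colon R\hookrightarrow N$, a Frobenius-reciprocity computation as in Step (4) of the proof of Theorem \ref{LM} shows $R$ is irreducible in $L$. Since $[L:R]=5\cdot4\cdot6=120=|\fS_5|$, I verify the depth-$2$ condition for $L\supset R$ by the dimension count of Step (5) there, using that $\iota_2\iota_3\bar{\iota_3}\bar{\iota_2}$ is the regular representation of $\fS_4$ and that $[\lambda]$ commutes with it. Lemma \ref{inv1} then gives $\dim(\theta\bar{\iota_2}\iota_2\iota_3\bar{\iota_3}\theta^{-1},\bar{\iota_2}\iota_2\iota_3\bar{\iota_3})=|\fS_3|$, whence $[\theta\iota_3\bar{\iota_3}\theta^{-1}]=[\iota_3\bar{\iota_3}]$; as the only nontrivial abelian normal subgroup of $\fS_3$ is $\Z_3$ and $H^2(\Z_3,\T)=0$, Lemma \ref{inv2} applies directly and produces $\theta_1\in\Aut(R)$ with $[\theta\iota_3]=[\iota_3\theta_1]$, completing Step (6). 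Lemma \ref{finishing} now furnishes a group $\Gamma_0\supseteq\fS_4$ of index $5$, an outer action extending $\gamma$ with $L=R\rtimes\Gamma_0$, and a doubly transitive $\Gamma_0$-action of degree $5$; since $|\Gamma_0|=120$, the only possibility is $\Gamma_0=\fS_5$, and the uniqueness of $R$ is inherited from Theorem \ref{LM}.

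I expect the main obstacle to be the categorical identification in Step (1): showing from the graph alone that the five small objects close up to a genuine copy of $\Rep(\fS_4)$ as a fusion category---not merely a fusion ring with the same dimensions---since it is this categorical equivalence that makes $\Ind_{\fS_3}^{\fS_4}\mathbf 1$ carry a (unique) $Q$-system and hence pins down $N$ and $\theta$ in Step (2). Here the complete classification of finite-depth subfactors of index $5$ in \cite{IMPP15} should do the heavy lifting, constraining $\cC$ enough to force $\cC_1\cong\Rep(\fS_4)$. By contrast the remaining steps are routine: Step (4) is the induction hypothesis (Theorem \ref{LM}), Step (5) is a dimension count, and Step (6) is unobstructed precisely because $H^2(\Z_3,\T)$ vanishes so that the cohomological hypothesis of Lemma \ref{inv2} holds, while the terminal identification $\Gamma_0=\fS_5$ is immediate from its order and degree.
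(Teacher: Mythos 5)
Your proposal is correct and is essentially the paper's own proof: the same six-step scheme, the same intermediate subfactor $N$ with $[\epsilon\bepsilon]=[\id]\oplus[\pi]$ obtained from Theorem \ref{intermediate} (applied to $M\supset\lambda(M)$, which is where the uniqueness of $N$ comes from), the same appeal to Theorem \ref{LM} for $M\supset N$, and the same use of Lemmas \ref{inv1}, \ref{inv2} (for $\fS_3$, with exactly your cohomological justification) and \ref{finishing}, ending with $\Gamma_0=\fS_5$ from its order and degree. The one point worth correcting is the ``main obstacle'' you flag at the end: no direct categorical identification $\cC_1\simeq\Rep(\fS_4)$ is needed, and \cite{IMPP15} is not the relevant input there. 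The paper only extracts the fusion \emph{ring} of $\cC_1$ combinatorially from the graph (via Frobenius reciprocity and associativity --- pure dimension counting does not suffice, e.g.\ to exclude $\lambda$ from $\pi^2$ one uses $[\pi\lambda]=[\lambda]\oplus[\lambda\xi]$ and self-conjugacy), and then, since $[M:N]=4$, invokes the classification of index-$4$ subfactors \cite{JMS14} to identify $\cG^d_{M\supset N}=E_7^{(1)}$ and hence $\cG_{M\supset N}=E_7^{(1)}=\widetilde{\cG_{(1^2),1}}$; all categorical content, including the uniqueness of $R$ and the $\fS_4$-action, is then delivered by Theorem \ref{LM} --- which is precisely what your own steps (4)--(6) already do, so your worry dissolves.
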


\begin{proof} We follow the strategy described in Subsection 2.5.

(1) Let $\delta:M\hookrightarrow L$ be the inclusion map, and let $[\bdelta\delta]=[\id]\oplus [\lambda]$ 
be the irreducible decomposition. 
We parametrize the irreducible $M$-$M$ sectors and the $L$-$M$ sectors generated by $\delta$ as in Figure \ref{S5-S4}. 
Then we have  
$$d(\lambda)=4,\quad d(\pi)=3,\quad d(\xi)=2,\quad d(\chi)=1,\quad d(\delta)=\sqrt{5}.$$
From the graph, we can see that all the $M$-$M$ sectors are self-conjugate, which implies $[\chi\lambda]=[\lambda\chi]$, 
$[\chi\pi]=[\pi\chi]$. 
The graph symmetry implies $[\xi\chi]=[\xi]$, and since $\xi$ is self-conjugate, we get 
$$[\xi^2]=[\id]\oplus [\chi]\oplus[\xi]$$
by dimension counting. 

The basic fusion rules coming from the graph are 
\begin{equation}\label{fuS5}
[\lambda^2]=[\id]\oplus [\lambda]\oplus [\pi]\oplus [\lambda\xi],
\end{equation}
$$[\lambda\pi]=[\lambda]\oplus [\lambda\xi],$$
$$[\lambda(\lambda\xi)]=[\lambda]\oplus [\lambda\chi]\oplus [\pi]\oplus[\pi\chi]\oplus[\xi]\oplus 2[\lambda\xi].$$
Taking conjugate, we also have 
$$[\pi\lambda]=[\lambda]\oplus [\lambda\xi].$$

Now direct computation using the Frobenius reciprocity and associativity shows the following fusion rules: 
$$[\pi^2]=[\id]\oplus [\pi]\oplus [\pi\chi]\oplus [\xi]$$
$$[\pi\xi]=[\xi\pi]=[\pi]\oplus[\pi\chi].$$

Let $\cC$ be the fusion category generated by $\bdelta\delta$. 
Then the above fusion rules show that there exists a fusion subcategory $\cC_1$ of $\cC$ with 
$$\Irr(\cC_1)=\{\id,\chi,\xi,\pi,\pi\chi\}.$$

(2) Theorem \ref{intermediate} and Eq.(\ref{fuS5}) imply that there exists a unique intermediate subfactor $N$ between 
$M$ and $\lambda(M)$ such that if $\epsilon:N\hookrightarrow M$ is the inclusion map, we have 
$$[\epsilon\bepsilon]=[\id]\oplus [\pi].$$ 
In the same way as in the proof of Lemma \ref{factorization}, there exists $\varphi\in \Aut(N)$ satisfying 
$[\lambda]=[\epsilon\varphi\bepsilon]$. 

(3) Note that we have $[M:N]=1+d(\pi)=4$. 
Thanks to the classification of subfactors of index 4 (see \cite[Subsection 3.2]{JMS14}) and $\Irr(\cC_1)$, we can see that 
$\cG_{M\supset N}^d$ is the Coxeter graph $E_7^{(1)}$, and so is $\cG_{M\supset N}$ too. 
Note that we have $E_7^{(1)}=\widetilde{\cG_{(1^2),1}}$, and $(L(3),PG_1(3))\cong (\fS_4,X_4)$. 
Let $\cC_2$ be the fusion category generated by $\bepsilon\epsilon$. 
As in Theorem \ref{LM}, we can parametrize $\Irr(\cC_2)$ as 
$$\Irr(\cC_2)=\{\id,\alpha',\rho',\sigma,\sigma\alpha'\},$$
with the following properties: 
$$d(\alpha')=1,\quad d(\rho')=2,\quad d(\sigma)=3,$$
$$[{\alpha'}^2]=[\id],$$
$$[\alpha'\rho]=[\rho'\alpha']=[\rho'],$$
$$[{\rho'}^2]=[\id]\oplus [\alpha']\oplus [\rho'],$$
$$[\sigma^2]=[\id]\oplus[\sigma]\oplus[\rho']\oplus [\sigma\alpha'],$$
$$[\alpha'\sigma]=[\sigma\alpha'],$$
$$[\sigma\rho']=[\rho'\sigma]=[\sigma]\oplus [\sigma\alpha'],$$
$$[\bepsilon\epsilon]=[\id]\oplus [\sigma].$$

(4)
Theorem \ref{LM} shows that there exists a unique subfactor $R\subset N$, up to inner conjugacy such that 
$R'\cap M=\C$ and there exists an outer action $\beta$ of $\fS_4$ on $R$ satisfying 
$$M=R\rtimes_\beta \fS_4\supset N=R\rtimes_\beta \fS_3.$$ 
To use notation consistent with that in Theorem \ref{GthF} and Theorem \ref{LM}, we let $P=R\rtimes_\beta\fS_3\subset N$ 
and we let $\iota:P\hookrightarrow N$ and $\kappa:R\hookrightarrow P$ be the inclusion maps. 
Let $\epsilon_1=\epsilon\iota\kappa$. 
Then $\epsilon_1\overline{\epsilon_1}$ corresponds to the regular representation of $\fS_4$, and
$$[\epsilon_1\overline{\epsilon_1}]=[\id]\oplus [\chi]\oplus 2[\xi]\oplus 3[\pi]\oplus 3[\pi\chi].$$
Thus since $[\bdelta\delta]=[\id]\oplus [\lambda]$,
$$\dim(\delta\epsilon_1,\delta\epsilon_1)=\dim(\bdelta\delta,\epsilon_1\overline{\epsilon_1})=1,$$
and $L\supset R$ is irreducible. 

(5) Note that we have $[L:R]=120$. 
On the other hand, 
$$\dim(\delta\epsilon_1\overline{(\delta\epsilon_1)},\delta\epsilon_1\overline{(\delta\epsilon_1)})
=\dim(\bdelta\delta\epsilon_1\overline{\epsilon_1},\epsilon_1\overline{\epsilon_1}\bdelta\delta).$$
Note that $[\lambda]$ commutes with $[\epsilon_1\overline{\epsilon_1}]$, and $[(\epsilon_1\overline{\epsilon_1})^2]
=|\fS_4|[\epsilon_1\overline{\epsilon_1}]$. 
Thus
\begin{align*}
\lefteqn{\dim(\bdelta\delta\epsilon_1\overline{\epsilon_1},\epsilon_1\overline{\epsilon_1}\bdelta\delta)
=\dim(\bdelta\delta\epsilon_1\overline{\epsilon_1},\bdelta\delta\epsilon_1\overline{\epsilon_1})} \\
 &= \dim((\bdelta\delta)^2,(\epsilon_1\overline{\epsilon_1})^2)=24\dim((\id\oplus \lambda)^2,\epsilon_1\overline{\epsilon_1})=120.
\end{align*}
Thus the inclusion $L\supset R$ is of depth 2. 

(6) We denote $\iota_3=\iota\kappa$. 
By Lemma \ref{inv1}, we get
$$\dim(\varphi\bepsilon\epsilon\iota_3\overline{\iota_3}\varphi^{-1},\bepsilon\epsilon\iota_3\overline{\iota_3})=|\fS_3|=6.$$
Note that $\iota_3\overline{\iota_3}$ corresponds to the regular representation in $\Rep(\fS_3)$, and 
$$[\iota_3\overline{\iota_3}]=[\id]\oplus [\alpha']\oplus 2[\rho'].$$
Thus 
$$[\bepsilon\epsilon\iota_3\overline{\iota_3}]=[(\id\oplus \sigma)(\id\oplus \alpha'\oplus 2\rho')]=[\id]\oplus[\alpha']\oplus 2[\rho']
\oplus 3[\sigma]\oplus 3[\sigma\alpha'].$$
Dimension counting implies 
$$\dim(\varphi(\id\oplus \alpha'\oplus 2\rho')\varphi^{-1},\id\oplus \alpha'\oplus 2\rho')=6,$$
and $[\varphi\iota_3\overline{\iota_3}\varphi^{-1}]=[\iota_3\overline{\iota_3}]$. 

Now we can apply Lemma \ref{inv2} to $\fS_3$, and we obtain $\varphi_1\in \Aut(R)$ satisfying $[\varphi\epsilon_1]=[\epsilon_1\varphi_1]$. 
Lemma \ref{finishing} implies that there exists a group $\Gamma$ including $\fS_4$ such that $\beta$ extends to an outer action 
$\gamma$ of $\Gamma$ satisfying $L=R\rtimes_\gamma\Gamma$. 
Note that $|\Gamma|=[L:R]=120$. 
Since the graph $\cG_{(\fS_5,X_5)}$ shows that the $\Gamma$-action on $\Gamma/\fS_4$ 
is a 3-transitive extension of $(\fS_4,X_4)$, we conclude $\Gamma=\fS_5$. 
\end{proof}

The remaining two cases are the most subtle because we cannot apply Lemma \ref{inv2} to either 
$\fA_4=H(2^2)=\Z_2^2\rtimes \Z_3$ or $M_9=S(3^2)=\Z_3^3\rtimes Q_8$ in the step (6). 

Since $\cG_{(\fA_6,X_6)}=\widetilde{\cG_{\fA_4}^{\fA_5}}$, we can obtain it from the induction-reduction graph 
$\cG_{\fA_4}^{\fA_5}$ between $\fA_5$ and $\fA_4$ (see, for example, \cite{IMPP15} for the latter). 

\begin{theorem}\label{GTA6}
Let $L\supset M$ be a finite index inclusion of factors with $\cG_{L\supset M}=\cG_{(\fA_6,X_6)}$. 
Then there exists a unique subfactor $R\subset M$, up to inner conjugacy, such that $R'\cap L=\C$ and there exists 
an outer action $\gamma$ of $\fA_6$ on $R$ satisfying 
$$L=R\rtimes_\gamma\fA_6\supset M=R\rtimes_\gamma\fA_5.$$ 
\begin{figure}[H]
\centering
\begin{tikzpicture}
\draw (-4,0)--(-3,0)--(-2,0)--(-1,0)--(0,0)--(1,0.5)--(2,1);
\draw (0,0)--(1,-0.5)--(2,-1);
\draw (0,0)--(0,1)--(-1,2);
\draw (0,1)--(1,2);
\draw (0,1)--(0,2);
\draw (-1,0)--(-1,1);
\draw(-4,0)node{$*$};
\draw(-3,0)node{$\bullet$};
\draw(-2,0)node{$\bullet$};
\draw(-1,0)node{$\bullet$};
\draw(0,0)node{$\bullet$};
\draw(1,0.5)node{$\bullet$};
\draw(2,1)node{$\bullet$};
\draw(1,-0.5)node{$\bullet$};
\draw(2,-1)node{$\bullet$};
\draw(0,1)node{$\bullet$};
\draw(-1,1)node{$\bullet$};
\draw(1,2)node{$\bullet$};
\draw(0,2)node{$\bullet$};
\draw(-1,2)node{$\bullet$};
\draw(-4,0)node[below]{$\id_M$};
\draw(-3,0)node[below]{$\delta$};
\draw(-2,0)node[below]{$\lambda$};
\draw(-1,0)node[below]{$\delta\pi$};
\draw(-1,1)node[above]{$\pi$};
\draw(0,0)node[below]{$\mu$};
\draw(0,1)node[right]{$\delta\xi_1$};
\draw(-1,2)node[above]{$\xi_1$};
\draw(0,2)node[above]{$\xi_2$};
\draw(1,2)node[above]{$\xi_3$};
\draw(1,0.5)node[below]{$\delta\eta_1$};
\draw(1,-0.5)node[below]{$\delta\eta_2$};
\draw(2,1)node[right]{$\eta_1$};
\draw(2,-1)node[right]{$\eta_2$};
\end{tikzpicture}
\caption{\label{A6-A5} $\cG_{(\fA_6,X_6)}$}
\end{figure}
\end{theorem}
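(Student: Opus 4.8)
The plan is to run the six-step reduction of Subsection 2.5 with $\Gamma=\fA_6$, $G=\fA_5$, $H=\fA_4$, following the proof of Theorem \ref{GTS5} almost verbatim and concentrating the new difficulty in step (6). Write $\delta\colon M\hookrightarrow L$ for the inclusion and $[\bdelta\delta]=[\id]\oplus[\lambda]$ with $d(\lambda)=5$. First I would read off from $\cG_{(\fA_6,X_6)}=\widetilde{\cG_{\fA_4}^{\fA_5}}$ (Figure \ref{A6-A5}) the complete fusion rules of the category $\cC$ generated by $\bdelta\delta$, by the same Frobenius-reciprocity and associativity bookkeeping used in the computations preceding Theorem \ref{dualM10-M9}; this isolates a fusion subcategory $\cC_1$, with simple objects of dimensions $1,3,3,4,5$, playing the role of $\Rep(\fA_5)$ (step (1)). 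The $4$-dimensional object $\pi\subset\lambda^2$ has $\{[\id],[\pi]\}$ self-conjugate and fusion-closed inside $\lambda\bar\lambda$, so Theorem \ref{intermediate} produces a unique intermediate subfactor $M\supset N\supset\lambda(M)$ with $[\epsilon\bepsilon]=[\id]\oplus[\pi]$, where $\epsilon\colon N\hookrightarrow M$, together with $\varphi\in\Aut(N)$ satisfying $[\lambda]=[\epsilon\varphi\bepsilon]$ (step (2)).

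Since $[M:N]=|\fA_5|/|\fA_4|=5$, step (3) combines $\Irr(\cC_1)$ with the classification of finite-depth index-$5$ subfactors in \cite{IMPP15} to identify $\cG_{M\supset N}$ as $\widetilde{\cG_{(1^3),1}}=\cG_{(\fA_5,X_5)}$. As $(\fA_5,X_5)\cong(L(2^2),PG_1(2^2))$, step (4) is a direct application of Theorem \ref{LM}: there is a subfactor $R\subset N$, unique up to inner conjugacy and irreducible in $M$, and an outer $\fA_5$-action with $M=R\rtimes\fA_5\supset N=R\rtimes\fA_4$, where $\fA_4=H(2^2)=\Z_2^2\rtimes\Z_3$ is a primitive Frobenius group. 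Writing $\iota_3\colon R\hookrightarrow N$, the computation of $[\delta\epsilon\iota_3\overline{\delta\epsilon\iota_3}]$ from the proof of Theorem \ref{GTS5} carries over to show that $R$ is irreducible in $L$ and, as $[L:R]=6\cdot 60=360$, that $L\supset R$ has depth $2$ (step (5)).

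Step (6) is where the argument must leave the template of Theorem \ref{GTS5}, and it is the part I expect to be hardest. Lemma \ref{inv1} gives $[\varphi\iota_3\overline{\iota_3}\varphi^{-1}]=[\iota_3\overline{\iota_3}]$, so $\varphi$ normalizes the copy of $\Rep(\fA_4)$ inside the $N$-$N$ sectors; but, in contrast with the cyclic-kernel group $\fS_3=H(3)$ of Theorem \ref{GTS5}, the Frobenius kernel $\Z_2^2\triangleleft\fA_4$ carries a non-degenerate $\Z_3$-invariant class in $H^2(\widehat{\Z_2^2},\T)=\Z_2$, so Lemma \ref{inv2} is unavailable and $\varphi$ could a priori send $R$ to a cohomologically twisted subfactor of $N$. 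I would attack this with Lemma \ref{inv3}, using the intermediate subfactor $P=R\rtimes\Z_2^2$ (the Frobenius kernel): its inclusion $\kappa\colon P\hookrightarrow N$ satisfies $[\kappa\bkappa]=[\id]\oplus[\omega]\oplus[\omega^2]$, and since in the regular representation $[\iota_3\overline{\iota_3}]=[\id]\oplus[\omega]\oplus[\omega^2]\oplus 3[\psi]$ these three characters occur with multiplicity $1$ while the $3$-dimensional $\psi$, absent from $\kappa\bkappa$, occurs with multiplicity $3$, the multiplicity hypothesis of Lemma \ref{inv3} holds; moreover $\varphi$, permuting the invertible $N$-$N$ sectors, satisfies $[\varphi\kappa\bkappa\varphi^{-1}]=[\kappa\bkappa]$. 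The genuine obstacle is the one remaining input, $\varphi\in\Aut(N,R)$: this is precisely the assertion that the twisted alternative does not occur, equivalently that the (depth-$2$) $R$-$R$ system of $L\supset R$ is pointed so that $[\overline{\iota_3}\varphi\iota_3]$ is a sum of invertibles. I expect to settle it by the explicit fusion-rule analysis of $\cC$ that also handles the parallel case $M_9=S(3^2)=\Z_3^2\rtimes Q_8$, showing that the non-trivial twist would force a sector of dimension $>1$ into this system and so contradict the shape of $\cG_{(\fA_6,X_6)}$; Lemma \ref{inv3} then yields $\varphi(P)=P$ and the descent $[\varphi\iota_3]=[\iota_3\varphi_1]$ with $\varphi_1\in\Aut(R)$.

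With step (6) in hand, Lemma \ref{finishing} applies unchanged: the $1$-dimensional sectors in $[\overline{\iota_3}\,\bepsilon\,\bdelta\,\delta\,\epsilon\,\iota_3]$ form a group $\Gamma_0$ strictly containing the image of $\fA_5$, with $R\rtimes\Gamma_0=L$ and $|\Gamma_0|=360$. The shape of $\cG_{(\fA_6,X_6)}$ shows that in the degree-$6$ action of $\Gamma_0$ on $\Gamma_0/\fA_5$ the point stabilizer $\fA_5$ acts on the remaining $5$ points as the sharply $3$-transitive $(\fA_5,X_5)$; hence $\Gamma_0$ is sharply $4$-transitive of degree $6$, and the classification \cite[Chapter XII]{HB} forces $\Gamma_0=\fA_6$, giving $L=R\rtimes\fA_6\supset M=R\rtimes\fA_5$. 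The decisive point, and the only one not covered by a ready-made invariance lemma, is the elimination of the $\Z_2^2$-twist in step (6).
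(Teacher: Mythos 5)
Your steps (1)--(5) agree with the paper's proof essentially verbatim, and you have correctly located the crux in step (6); but your proposed repair there does not close the gap. Lemma \ref{inv3} takes $\theta\in\Aut(N,R)$ as a \emph{hypothesis}: the automorphism must already preserve the smaller algebra globally. For your choice $N\supset P\supset R$ with $P=R\rtimes\Z_2^2$, the needed input $\varphi\in\Aut(N,R)$ is (up to an inner perturbation) exactly the conclusion $[\varphi\iota_3]=[\iota_3\varphi_1]$ that step (6) is supposed to deliver, so the application is circular --- and if you did have $\varphi\in\Aut(N,R)$, there would be nothing left for Lemma \ref{inv3} to do. The burden is then shifted entirely onto the unproved expectation that ``the depth-$2$ $R$-$R$ system of $L\supset R$ is pointed''; depth $2$ only produces a Kac algebra of dimension $360$, and nothing in the shape of $\cG_{(\fA_6,X_6)}$ visibly excludes a twisted (non-pointed) alternative --- no such contradiction argument is sketched, and none appears in the paper. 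Note moreover that even granting $\varphi(P)=P$ for your kernel-side $P$, the final descent $P\supset R$ would be blocked a second time by the same obstruction: Lemma \ref{inv2} fails for $H=\Z_2^2$ itself, since $H^2(\widehat{\Z_2^2},\T)\cong\Z_2$ carries a nondegenerate class that is automatically invariant --- this is precisely the Kodiyalam--Sunder twist you are trying to rule out, and it lives on the kernel.

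The paper's actual detour is complement-side, not kernel-side. First it proves $[\varphi^2]=[\id]$: self-conjugacy of $\lambda$ gives $1=\dim(\epsilon\varphi\bepsilon,\epsilon\varphi^{-1}\bepsilon)$, hence either $[\varphi^2]=[\id]$ or $[\varphi\sigma\varphi]=[\sigma]$, and the latter is excluded by comparing with the decomposition of $[\lambda^2]$. It then sets $\omega=\sigma\varphi\bepsilon$ and shows that $\omega$ is irreducible, that $\dim(\rho',\omega\bar{\omega})=1$, and --- using $[\varphi^2]=[\id]$ and $\dim(\lambda,\lambda^2)=1$ --- that $[\varphi\omega]=[\omega]$. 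Since $\sigma$ factorizes as $\iota\tau\biota$ through $P=R\rtimes_\beta\fA_3$ (proof of Theorem \ref{LM}), one has $N\supset P\supset\omega(M)$, and Lemma \ref{inv3} now applies \emph{legitimately}: the small algebra is $\omega(M)$, whose $\varphi$-invariance is the just-proved $[\varphi\omega]=[\omega]$ rather than any assumption on $R$, while $[\iota\biota]=[\id]\oplus[\rho']$ meets the multiplicity hypothesis. This yields $\psi\in\Aut(P)$ with $[\varphi\iota]=[\iota\psi]$; a dimension count starting from Lemma \ref{inv1} then forces $[\psi\kappa\bkappa\psi^{-1}]=[\kappa\bkappa]$ for $\kappa:R\hookrightarrow P$, and Lemma \ref{inv2} is finally applied to the \emph{cyclic} group $\fA_3\cong\Z_3$, where no nondegenerate cocycle exists, giving $\varphi_1\in\Aut(R)$ with $[\varphi\iota\kappa]=[\iota\kappa\varphi_1]$. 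In other words, the $\Z_2^2$-twist is never confronted head-on; it is bypassed by descending through the Frobenius complement instead of the kernel. Your concluding identification of $\Gamma_0$ with $\fA_6$ via Lemma \ref{finishing} and the classification of sharply $4$-transitive groups is fine once step (6) is secured.
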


\begin{proof}
(1) Let $\delta:M\hookrightarrow L$ be the inclusion map, and let $[\bdelta\delta]=[\id]\oplus [\lambda]$ 
be the irreducible decomposition. 
We parametrize the irreducible $M$-$M$ sectors and the $L$-$M$ sectors generated by $\delta$ as in Figure \ref{A6-A5}. 
Then we have  
$$d(\lambda)=d(\xi_1)=d(\xi_2)=d(\xi_3)=5,\quad d(\pi)=4,\quad d(\mu)=15,\quad d(\eta_1)=d(\eta_2)=3,$$
$$d(\delta)=\sqrt{6}.$$
From the graph, we can see that $\lambda$, $\pi$ and $\mu$ are self-conjugate, and 
$$\{[\overline{\xi_1}],[\overline{\xi_2}],[\overline{\xi_3}]\}=\{[\xi_1],[\xi_2],[\xi_3]\},\quad 
\{[\overline{\eta_1}],[\overline{\eta_2}]\}=\{[\eta_1],[\eta_2]\}.$$
We use the notation $[\overline{\xi_i}]=[\xi_{\bar{i}}]$ and $[\overline{\eta_j}]=[\eta_{\bar{j}}]$ for simplicity. 

The basic fusion rules coming from the graph and their conjugate are 
\begin{equation}\label{A1}
[\lambda^2]=[\id]\oplus [\lambda]\oplus [\pi]\oplus [\mu],
\end{equation}
\begin{equation}\label{A2}
[\lambda\pi]=[\pi\lambda]=[\lambda]\oplus [\mu],
\end{equation}
\begin{equation}\label{A3}
[\lambda\mu]=[\mu\lambda]=[\lambda]\oplus [\pi]\oplus[\xi_1]\oplus[\xi_2]\oplus [\xi_3]\oplus [\eta_1]\oplus[\eta_2]\oplus 3[\mu],
\end{equation}
\begin{equation}\label{A4}
[\lambda\xi_i]+[\xi_i]=[\xi_i\lambda]\oplus [\xi_i]=[\xi_1]\oplus [\xi_2]\oplus [\xi_3]\oplus [\mu], 
\end{equation}
\begin{equation}\label{A5}
[\lambda\eta_i]=[\eta_i\lambda]=[\mu]. 
\end{equation}
  
By associativity, we get 
\begin{equation}\label{A6}
[\pi^2]\oplus [\mu\pi]=[\id]\oplus [\lambda]\oplus [\pi]\oplus[\xi_1]\oplus[\xi_2]\oplus [\xi_3]\oplus [\eta_1]\oplus[\eta_2]\oplus 3[\mu],
\end{equation}
\begin{equation}\label{A7}
[\pi\mu]\oplus[\mu^2]=[\id]\oplus 4[\lambda]\oplus 3[\pi]\oplus 4[\xi_1]\oplus 4[\xi_2]\oplus 4[\xi_3]
\oplus 2[\eta_1]\oplus 2[\eta_2]\oplus 12[\mu],
\end{equation}
\begin{equation}\label{A8}
[\pi\xi_i]\oplus[\mu\xi_i]=[\xi_i]\oplus [\lambda]\oplus [\pi]\oplus[\xi_1]\oplus[\xi_2]\oplus [\xi_3]\oplus 
[\eta_1]\oplus[\eta_2]\oplus 4[\mu].
\end{equation}
\begin{equation}\label{A9}
[\eta_i]\oplus [\pi\eta_i]\oplus [\mu\eta_i]=[\lambda]\oplus [\pi]\oplus[\xi_1]\oplus[\xi_2]\oplus [\xi_3]\oplus 
[\eta_1]\oplus[\eta_2]\oplus 2[\mu].
\end{equation}

Eq.(\ref{A2}) shows 
$$1=\dim(\lambda\pi,\mu)=\dim(\lambda,\mu\pi).$$
Since $d(\eta_i\pi)<d(\mu)$, we have 
$$0=\dim(\eta_i\pi,\mu)=\dim (\eta_i,\mu\pi).$$
Eq.(\ref{A6}) shows that $\pi^2$ contains $\id$, $\eta_1$,$\eta_2$, and it cannot contain $\mu$ by dimension counting, 
which implies $\dim(\pi,\mu\pi)=0$ by the Frobenius reciprocity. 
Eq.(\ref{A6}) again shows that $\mu\pi$ contains $\mu$ with multiplicity 3 and $\pi^2$ contains $\pi$ with multiplicity 1. 
Thus we get 
$$[\pi^2]=[\id]\oplus [\pi]\oplus [\eta_1]\oplus [\eta_2]\oplus 5\textrm{ dim},\quad
[\mu\pi]=[3\mu]\oplus [\lambda]\oplus 10\textrm{ dim},$$
where the remainder is $\xi_1\oplus \xi_2\oplus \xi_3$. 
Therefore we may and do assume that $\pi^2$ contains $\xi_1$, and we get 
\begin{equation}
\label{A10}
[\pi^2]=[\id]\oplus [\pi]\oplus [\xi_1]\oplus [\eta_1]\oplus [\eta_2],
\end{equation}
\begin{equation}\label{A11}
[\mu\pi]=[3\mu]\oplus [\lambda]\oplus [\xi_2]\oplus[\xi_3]. 
\end{equation}
In consequence $\xi_1$ is self-conjugate. 
Taking conjugate of Eq.(\ref{A11}), we also get $[\mu\pi]=[\pi\mu]$, and Eq.(\ref{A7}) implies 
\begin{equation}\label{A12}
[\mu^2]=[\id]\oplus 3[\lambda]\oplus 3[\pi]\oplus 4[\xi_1]\oplus 3[\xi_2]\oplus 3[\xi_3]
\oplus 2[\eta_1]\oplus 2[\eta_2]\oplus 9[\mu].
\end{equation}

The Frobenius reciprocity implies 
$$[\pi\xi_1]=[\pi]\oplus 16\textrm{ dim},\quad
[\mu\xi_1]=4[\mu]\oplus [\lambda]\oplus 10\textrm{ dim},$$
and Eq.(\ref{A8}) with dimension counting implies 
$$[\pi\xi_1]=[\pi]\oplus [\eta_1]\oplus [\eta_2]\oplus 10\textrm{ dim},\quad
[\mu\xi_1]=4[\mu]\oplus [\lambda]\oplus 10\textrm{ dim},$$
where the remainder is $2[\xi_1]\oplus [\xi_2]\oplus[\xi_3].$

For $i=2,3$, Eq.(\ref{A8}) and (\ref{A12}) show that we have 
$$3=\dim(\xi_i,\mu^2)=\dim(\mu\xi_i,\mu),$$ 
and $\mu\xi_i$ contains $\mu$ with multiplicity 3, while it does not contain $\pi$ as 
$$0=\dim(\pi^2,\xi_i)=\dim(\pi,\pi\xi_i).$$ 
Thus
$$[\pi\xi_i]=[\mu]\oplus 5 \textrm{ dim},\quad
[\mu\xi_i]=[\lambda]\oplus [\pi]\oplus 3[\mu] \oplus [\eta_1]\oplus [\eta_2]\oplus 15 \textrm{ dim},$$
where the remainder is $[\xi_i]\oplus [\xi_1]\oplus [\xi_2]\oplus[\xi_3].$
If $\mu\xi_i$ contained $\xi_i$ with multiplicity 2, the Frobenius reciprocity implies that 
$\xi_i\overline{\xi_i}$ would contain $\mu$ with multiplicity 2, which is impossible. 
Thus we get 
\begin{equation}\label{A15}
[\pi\xi_i]=[\mu]\oplus [\xi_i],\quad i=2,3,
\end{equation}\label{A16}
\begin{equation}
[\mu\xi_i]=[\lambda]\oplus [\pi]\oplus [\xi_1]\oplus [\xi_2]\oplus [\xi_3]\oplus[\eta_1]\oplus [\eta_2]\oplus 3[\mu],\quad i=2,3. 
\end{equation}
Eq.(\ref{A15}) shows 
$$0=\dim(\pi\xi_i,\xi_1)=\dim(\xi_i,\pi\xi_1),\quad i=2,3.$$
Thus 
\begin{equation}\label{A13}
[\pi\xi_1]=[\pi]\oplus [\eta_1]\oplus [\eta_2]\oplus 2[\xi_1],
\end{equation}
\begin{equation}\label{A14}
[\mu\xi_1]=4[\mu]\oplus [\lambda]\oplus [\xi_1]\oplus [\xi_2]. 
\end{equation}

The Frobenius reciprocity together with the fusion rules obtained so far implies 
\begin{equation}
[\pi\eta_1]=[\pi]\oplus [\xi_1]\oplus[\eta_2],
\end{equation}
\begin{equation}
[\pi\eta_2]=[\pi]\oplus [\xi_1]\oplus[\eta_1],
\end{equation}
\begin{equation}
[\mu\eta_i]=2[\mu]\oplus [\lambda]\oplus [\xi_2]\oplus [\xi_3]. 
\end{equation}

Let $\cC$ be the fusion category generated by $\bdelta\delta$. 
Then the above computation shows that the fusion subcategory $\cC_1$ of $\cC$ generated by $\pi$ satisfies 
$$\Irr(\cC_1)=\{\id,\pi,\xi_1,\eta_1,\eta_2\}.$$

(2) Theorem \ref{intermediate} and Eq.(\ref{A1}) imply that there exists a unique intermediate subfactor $N$ between 
$M$ and $\lambda(M)$ such that if $\epsilon:N\hookrightarrow M$ is the inclusion map, we have 
$$[\epsilon\bepsilon]=[\id]\oplus [\pi].$$ 
Note that we have $d(\epsilon)=\sqrt{5}$. 
In the same way as in the proof of Lemma \ref{factorization}, there exists $\varphi\in \Aut(N)$ satisfying 
$[\lambda]=[\epsilon\varphi\bepsilon]$. 

(3) Since 
$$\dim(\pi\epsilon,\pi\epsilon)=\dim(\pi^2,\epsilon\bepsilon)=\dim(\pi^2,\id\oplus \pi)=2,$$
there exists an irreducible sector $\epsilon'$ with $[\pi\epsilon]=[\epsilon]\oplus [\epsilon']$ 
and $d(\epsilon')=3\sqrt{5}$. 
Since 
$$[\pi\epsilon\bepsilon]=[\pi(\id\oplus \pi)]=[\id]\oplus 2[\pi]\oplus [\xi_1]\oplus [\eta_1]\oplus[\eta_2],$$
we get 
$$[\epsilon'\bepsilon]=[\pi]\oplus [\xi_1]\oplus [\eta_1]\oplus [\eta_2].$$
The Frobenius reciprocity and dimension counting show $[\eta_1\epsilon]=[\eta_2\epsilon]=[\epsilon']$. 
Since $\xi_1$ is self-conjugate,
$$\dim(\xi_1\epsilon,\xi_1\epsilon)=\dim(\xi_1,\xi_1\epsilon\bepsilon)=\dim(\xi_1,\xi_1(\id\oplus \pi))=1+\dim(\xi_1,\xi_1\pi)
=1+\dim (\xi_1,\pi\xi_1),$$
and Eq.(\ref{A13}) shows $\dim(\xi_1\epsilon,\xi_1\epsilon)=3$. 
This together with the Frobenius reciprocity imply that there exist irreducible sectors $\epsilon''$ 
and $\epsilon'''$ satisfying $d(\epsilon'')=d(\epsilon'')=\sqrt{5}$,
$$[\xi_1\epsilon]=[\epsilon']\oplus [\epsilon'']\oplus [\epsilon'''],$$
and $[\epsilon''\bepsilon]=[\epsilon'''\bepsilon]=[\xi_1]$. 
The above computation shows that the dual principal graph $\cG_{M\supset N}^d$ is $\cG_{\fA_4}^{\fA_5}$, 
and the classification of finite depth subfactors of index 5 shows that $\cG_{M\supset N}$ 
is $\cG_{(\fA_5,X_5)}$ (see \cite{IMPP15}). 
Note that we have $(\fA_5,X_5)=(L(2^2),PG_1(2^2))$, and $\cG_{M\supset N}=\widetilde{\cG_{(1^3),1}}$. 
Let $\cC_2$ be the fusion category generated by $\bepsilon\epsilon$. 
As in the proof of Theorem \ref{LM}, we can parametrize $\Irr(\cC_2)$ as 
$$\Irr(\cC_2)=\{\id,\alpha',{\alpha'}^2,\rho',\sigma,\sigma\alpha',\sigma{\alpha'}^2\},$$
with the following properties: 
$$d(\alpha')=1,\quad d(\rho')=3,\quad d(\sigma)=4,$$
$$[{\alpha'}^3]=[\id],$$
$$[\alpha'\rho]=[\rho'\alpha']=[\rho'],$$
$$[{\rho'}^2]=[\id]\oplus [\alpha']\oplus[{\alpha'}^2]+2[\rho'],$$
$$[\sigma^2]=[\id]\oplus[\rho']\oplus[\sigma]\oplus [\sigma\alpha']\oplus[\sigma{\alpha'}^2],$$
$$[\alpha'\sigma]=[\sigma{\alpha'}^2],$$
$$[\rho'\sigma]=[\sigma\rho']= [\sigma]\oplus[\sigma\alpha']\oplus[\sigma{\alpha'}^2],$$
$$[\bepsilon\epsilon]=[\id]\oplus [\sigma].$$

(4) Theorem \ref{LM} shows that there exists a unique subfactor $R\subset N$, up to inner conjugacy, such that 
$R'\cap M=\C$ and there exists an outer action $\beta$ of $\fA_5$ on $R$ satisfying 
$$M=R\rtimes_\beta \fA_5\supset N=R\rtimes_\beta \fA_4.$$ 
Let $P=R\rtimes_\beta\fA_3\subset N$, and let $\iota:P\hookrightarrow N$ and $\kappa:R\hookrightarrow P$ be the inclusion maps. 
Let $\epsilon_1=\epsilon\iota\kappa$. 
Then $\epsilon_1\overline{\epsilon_1}$ corresponds to the regular representation of $\fA_5$, and
$$[\epsilon_1\overline{\epsilon_1}]=[\id]\oplus 3[\eta_1]\oplus 3[\eta_2]\oplus 4[\pi]\oplus 5[\xi].$$
Thus since $[\bdelta\delta]=[\id]\oplus [\lambda]$,
$$\dim(\delta\epsilon_1,\delta\epsilon_1)=\dim(\bdelta\delta,\epsilon_1\overline{\epsilon_1})=1,$$
and $L\supset R$ is irreducible. 

(5) Note that we have $[L:R]=6|\fA_5|=360$. 
On the other hand, since $[\lambda]$ commutes with $[\epsilon_1\overline{\epsilon_1}]$, and 
$[(\epsilon_1\overline{\epsilon_1})^2]
=|\fA_5|[\epsilon_1\overline{\epsilon_1}]$, 
\begin{align*}
\lefteqn{\dim(\delta\epsilon_1\overline{(\delta\epsilon_1)},\delta\epsilon_1\overline{(\delta\epsilon_1)})
=\dim(\bdelta\delta\epsilon_1\overline{\epsilon_1},\epsilon_1\overline{\epsilon_1}\bdelta\delta)} \\
 &=\dim(\bdelta\delta\epsilon_1\overline{\epsilon_1},\bdelta\delta\epsilon_1\overline{\epsilon_1})
 =\dim((\bdelta\delta)^2,(\epsilon_1\overline{\epsilon_1})^2)=60\dim((\id\oplus \lambda)^2,\epsilon_1\overline{\epsilon_1})\\
 &=60\dim (2\id\oplus \pi\oplus 3\lambda\oplus \mu,\epsilon_1\overline{\epsilon_1})=360.
 \end{align*}
Therefore the inclusion $L\supset R$ is of depth 2. 
 
(6) We denote $\iota_3=\iota\kappa$. 
By Lemma \ref{inv1}, we get
$$\dim(\varphi\bepsilon\epsilon\iota_3\overline{\iota_3}\varphi^{-1},\bepsilon\epsilon\iota_3\overline{\iota_3})=|\fA_4|=12.$$
Note that $\iota_3\overline{\iota_3}$ corresponds to the regular representation of $\fA_4$, and 
$$[\iota_3\overline{\iota_3}]=[\id]\oplus [\alpha']\oplus [{\alpha'}^2]\oplus 3[\rho'].$$
Thus 
$$[\bepsilon\epsilon\iota_3\overline{\iota_3}]=[(\id\oplus \sigma)(\id\oplus \alpha'\oplus {\alpha'}^2\oplus 3\rho')]
=[\id]\oplus [\alpha']\oplus [{\alpha'}^2]\oplus 3[\rho']
\oplus 4[\sigma]\oplus 4[\sigma\alpha']\oplus 4[\sigma{\alpha'}^2].$$
Dimension counting implies 
$$\dim(\varphi(\id\oplus \alpha'\oplus {\alpha'}^2\oplus 3\rho')\varphi^{-1},\id\oplus \alpha'\oplus {\alpha'}^2\oplus 3\rho')=12,$$
and $[\varphi\iota_3\overline{\iota_3}\varphi^{-1}]=[\iota_3\overline{\iota_3}]$. 
We also have 
\begin{equation}\label{disjoint}
(\varphi(\sigma\oplus \sigma\alpha'\oplus\sigma{\alpha'}^2)\varphi^{-1},(\sigma\oplus \sigma\alpha'\oplus\sigma{\alpha'}^2) )=0.
\end{equation}

To finish the proof, we cannot apply Lemma \ref{inv2} to $\fA_4$, and we make a little detour. 
We examine the automorphism $\varphi\in \Aut(N)$ more carefully. 
We first claim $[\varphi^2]=[\id]$. 
Indeed, since $\lambda$ is self-conjugate,
$$1=\dim(\epsilon\varphi\bepsilon,\epsilon\varphi^{-1}\bepsilon)=\dim(\bepsilon\epsilon\varphi,\varphi^{-1}\bepsilon\epsilon)
=\dim(\varphi\oplus \sigma\varphi,\varphi^{-1}\oplus \varphi^{-1}\sigma),$$
and either $[\varphi^2]=[\id]$ or $[\varphi\sigma\varphi]=[\sigma]$ holds. 
Assume that the latter holds. 
Then 
$$[\lambda^2]=[\epsilon\varphi\bepsilon\epsilon\varphi\bepsilon]=[\epsilon\varphi(\id\oplus \sigma)\varphi\bepsilon]
=[\epsilon\varphi^2\bepsilon]\oplus [\epsilon\varphi\sigma\varphi\bepsilon]=[\epsilon\varphi^2\bepsilon]\oplus [\epsilon\sigma\bepsilon].$$
Since 
$$[\epsilon\bepsilon]\oplus [\epsilon\sigma\bepsilon]=[(\epsilon\bepsilon)^2]=([\id]\oplus [\pi])^2=2[\id]\oplus3 [\pi]
\oplus[\xi_1]\oplus [\eta_1]\oplus[\eta_2],$$
we get 
$$[\lambda^2]=[\epsilon\varphi^2\bepsilon]\oplus [\id]\oplus 2[\pi]\oplus[\xi_1]\oplus [\eta_1]\oplus[\eta_2],$$
which is contradiction. 
Thus the claim is shown, and we also have $[\varphi\sigma\varphi]\neq[\sigma]$.

Let $\omega=\sigma\varphi\bepsilon$. 
We show the following 3 properties of $\omega$. 
\begin{itemize} 
 \item[(i)] $\omega$ is irreducible. 
 \item[(ii)] $\dim(\rho',\omega\bar{\omega})=1$. 
 \item[(iii)] $[\varphi\omega]=[\omega]$. 
\end{itemize} 
Indeed, thanks to Eq.(\ref{disjoint}), we get 
$$\dim(\omega,\omega)=\dim(\sigma^2,\varphi\bepsilon\epsilon\varphi^{-1})=
\dim(\id\oplus \rho'\oplus \sigma\oplus \sigma\alpha'\oplus \sigma{\alpha'}^2,\varphi(\id\oplus \sigma)\varphi^{-1})=1,$$
and $\omega$ is irreducible. 
(ii) also follows from Eq.(\ref{disjoint}) as we have 
$$\dim(\rho',\omega\bar{\omega})=\dim(\rho'\omega,\omega)=\dim(\sigma\rho'\sigma,\varphi(\id\oplus \sigma)\varphi^{-1}),$$ 
and $\sigma\rho'\sigma$ contains $\id$ with multiplicity 1. 
(iii) follows from
\begin{align*}
\lefteqn{1=\dim(\lambda,\lambda^2)=\dim(\epsilon\varphi\bepsilon,\epsilon\varphi\bepsilon\epsilon\varphi\bepsilon)
=\dim(\bepsilon\epsilon\varphi\bepsilon,\varphi\bepsilon\epsilon\varphi\bepsilon) } \\
 &=\dim((\id\oplus \sigma)\varphi\bepsilon,\varphi(\id\oplus\sigma)\varphi\bepsilon)
 =\dim(\varphi\bepsilon\oplus\omega,\bepsilon\oplus \varphi\omega)\\
 &=\dim(\varphi,\bepsilon\epsilon)+\dim(\omega,\varphi\omega)=\dim(\omega,\varphi\omega). 
\end{align*}

The proof of Theorem \ref{LM} shows that there exists $\tau\in \Aut(P)$ such that $\sigma$ factorizes as 
$\sigma=\iota\tau\biota$. 
Thus we have $N\supset P\supset \omega(M)$. 
Since $[\iota\biota]=[\id]\oplus [\rho']$, Lemma \ref{inv3} shows that there exists a unitary $u\in N$ satisfying 
$\Ad u\circ \varphi(P)=P$, which means that there exists $\psi\in \Aut(P)$ satisfying $[\varphi\iota]=[\iota\psi]$. 
Now we have 
$$12=\dim(\iota\psi\kappa\bkappa\psi^{-1}\biota,\iota\kappa\bkappa\biota)=\dim(\psi\kappa\bkappa\psi^{-1},\biota\iota\kappa\bkappa\biota\iota).$$

We parametrize $P$-$P$ sectors generated by $\biota\iota$ as in the proof of Theorem \ref{GthF}. 
Then $[\biota\iota]=[\id]\oplus [\rho]$, $[\kappa\bkappa]=[\id]\oplus [\alpha]\oplus[\alpha^2]$, $d(\rho)=3$, $d(\alpha)=1$, $\alpha^3=\id$, 
and they satisfy the following fusion rules: 
$$[\alpha\rho]=[\rho\alpha]=[\rho],$$
$$[\rho^2]=[\id]\oplus[\alpha]\oplus [\alpha^2]\oplus 2[\rho].$$
Now we have  
$$[\biota\iota\kappa\bkappa\biota\iota]=4([\id]\oplus[\alpha]\oplus[\alpha^2]\oplus 3[\rho]),$$
and we get 
$$3=\dim(\psi(\id\oplus\alpha\oplus \alpha^2)\psi^{-1},\id\oplus\alpha\oplus\alpha^2\oplus 3\rho).$$
Thus $[\psi\kappa\bkappa\psi^{-1}]=[\kappa\bkappa]$. 
Lemma \ref{inv2} shows that there exists $\varphi_1\in \Aut(R)$ satisfying $[\psi\kappa]=[\kappa\varphi_1]$, and so 
$[\varphi\iota\kappa]=[\iota\kappa\varphi_1]$. 
Lemma \ref{finishing} shows that there exists a group $\Gamma$ containing $\fA_5$ such that $\gamma$ extends to 
an outer action of $\Gamma$ on $R$ such that 
$$L=R\rtimes \Gamma.$$
The graph $\cG_{L\supset M}$ shows that the $\Gamma$-action on $\Gamma/\fA_5$ is 3-transitive extension of $(\fA_5,X_5)$, 
and we conclude that $\Gamma=\fA_6$. 
\end{proof}

\begin{remark} A similar argument works for $(\fS_6,X_6)$. In this case, we can apply Lemma \ref{inv2} to 
$\fS_3=\Z_3\rtimes \Z_2$ instead of $\fA_3=\Z_3 $ at the last step. 
\end{remark}

Note that we computed the graph $\cG_{M_9}^{M_{10}}$ in Section 4, and the graph $\cG_{M_{11}>M_{10}}$ for 
the Mathieu group $M_{11}$ can be obtained by $\cG_{M_{11}>M_{10}}=\widetilde{\cG_{M_9}^{M_{10}}}$.

\begin{theorem}\label{GthM}
Let $L\supset M$ be a finite index inclusion of factors with $\cG_{L\supset M}=\cG_{M_{11}>M_{10}}$. 
Then there exists a unique subfactor $R\subset M$, up to inner conjugacy, such that $R'\cap L=\C$ and there exists 
an outer action $\gamma$ of $M_{11}$ on $R$ satisfying 
$$L=R\rtimes_\gamma M_{11}\supset M=R\rtimes_\gamma M_{10}.$$ 
\begin{figure}[H]
\centering
\begin{tikzpicture}
\draw (-4,2)--(-3,1.5)--(-2,1)--(-1,0.5)--(0,0)--(-1,-0.5)--(-2,-1)--(-3,-1.5)--(-4,-2);
\draw (-2,0.5)--(-1,0.5);
\draw (-2,-0.5)--(-1,-0.5);
\draw (0,0)--(0,1)--(-1,2);
\draw (0,1)--(1,2);
\draw (0,1)--(0,2);
\draw (0,0)--(1,0.5)--(2,0)--(1,-0.5)--(0,0);
\draw (1,0.5)--(2,1);
\draw (1,-0.5)--(2,-1);
\draw (0,0)--(0,-1)--(0,-2);
\draw(0.1,-0.5)node{2};
\draw(-4,2)node{$*$};
\draw(-3,1.5)node{$\bullet$};
\draw(-2,1)node{$\bullet$};
\draw(-1,0.5)node{$\bullet$};
\draw(0,0)node{$\bullet$};
\draw(-1,-0.5)node{$\bullet$};
\draw(-2,-1)node{$\bullet$};
\draw(-3,-1.5)node{$\bullet$};
\draw(-4,-2)node{$\bullet$};
\draw(-2,0.5)node{$\bullet$};
\draw(-2,-0.5)node{$\bullet$};
\draw(0,1)node{$\bullet$};
\draw(-1,2)node{$\bullet$};
\draw(1,2)node{$\bullet$};
\draw(0,2)node{$\bullet$};
\draw(1,0.5)node{$\bullet$};
\draw(2,1)node{$\bullet$};
\draw(1,-0.5)node{$\bullet$};
\draw(2,-1)node{$\bullet$};
\draw(2,0)node{$\bullet$};
\draw(0,-1)node{$\bullet$};
\draw(0,-2)node{$\bullet$};
\draw(-3.8,2.3)node{$\id_M$};
\draw(-2.8,1.8)node{$\delta$};
\draw(-1.8,1.3)node{$\lambda$};
\draw(-0.8,0.8)node{$\delta\pi$};
\draw(-2,0.5)node[left]{$\pi$};
\draw(-3.8,-2.3)node{$\chi$};
\draw(-2.8,-1.8)node{$\delta\chi$};
\draw(-1.8,-1.3)node{$\lambda\chi$};
\draw(-0.8,-0.8)node{$\delta\pi\chi$};
\draw(-2,-0.5)node[left]{$\pi\chi$};
\draw(0.2,0.3)node{$\mu$};
\draw(0,1)node[right]{$\delta\xi_1$};
\draw(-1,2)node[above]{$\xi_1$};
\draw(0,2)node[above]{$\xi_2$};
\draw(1,2)node[above]{$\xi_3$};
\draw(1,0.5)node[below]{$\delta\eta_1$};
\draw(1,-0.5)node[below]{$\delta\eta_2$};
\draw(0,-1)node[right]{$\delta\zeta$};
\draw(2,1)node[right]{$\eta_1$};
\draw(2,-1)node[right]{$\eta_2$};
\draw(2,0)node[right]{$\nu$};
\draw(0,-2)node[below]{$\zeta$};
\end{tikzpicture}
\caption{\label{M11-M10}$\cG_{M_{11}>M_{10}}$}
\end{figure}
\end{theorem}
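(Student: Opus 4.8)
The plan is to run the six-step induction of Subsection 2.5, in close parallel with the proofs of Theorems \ref{GTS5} and \ref{GTA6}, reducing the pair $M_{11}>M_{10}$ to $M_{10}>M_9$, for which Theorem \ref{dualM10-M9} is available. Write $\delta:M\hookrightarrow L$ for the inclusion and $[\bdelta\delta]=[\id]\oplus[\lambda]$ for its decomposition, and parametrise the $M$-$M$ and $L$-$M$ sectors as in Figure \ref{M11-M10}; the fourteen even vertices split into a copy of $\widehat{M_{10}}$ (dimensions $1,1,9,9,10,10,10,16$) and six sectors induced from $\widehat{M_9}$. For step (1) I would expand $\lambda^2$ and its products to read off the fusion rules (exactly as at the opening of the proof of Theorem \ref{GTA6}), the goal being to exhibit the fusion subcategory $\cC_1$ generated by the nine-dimensional sector $\pi$, with $\Irr(\cC_1)=\{\id,\chi,\pi,\pi\chi,\xi_1,\eta_1,\eta_2,\zeta\}\cong\Rep(M_{10})$.

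For step (2), the subset $\{\id,\pi\}$ is closed inside the support of $\lambda^2$, so Theorem \ref{intermediate} yields a unique intermediate subfactor $M\supset N\supset\lambda(M)$ whose inclusion $\epsilon:N\hookrightarrow M$ satisfies $[\epsilon\bepsilon]=[\id]\oplus[\pi]$, and the argument of Lemma \ref{factorization} produces $\varphi\in\Aut(N)$ with $[\lambda]=[\epsilon\varphi\bepsilon]$. Step (3) is the bulk of the combinatorics: composing $\epsilon$ with each even sector and using Frobenius reciprocity, I would identify all $M$-$N$ sectors and check that the dual principal graph $\cG_{M\supset N}^d$ is exactly the graph $\cG_{M_9}^{M_{10}}$ of Figure \ref{M10-M9} computed in Section 4, so that Theorem \ref{dualM10-M9} forces $\cG_{M\supset N}=\cG_{M_{10}>M_9}$.

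Steps (4) and (5) are then routine. Theorem \ref{dualM10-M9} supplies $R\subset N$, unique up to inner conjugacy, and an outer $M_{10}$-action $\beta$ with $M=R\rtimes_\beta M_{10}$ and $N=R\rtimes_\beta M_9$; since $M_9=\Z_3^2\rtimes Q_8$ is Frobenius, there is a further intermediate $P=R\rtimes_\beta Q_8$, with inclusion maps $\iota:P\hookrightarrow N$ and $\kappa:R\hookrightarrow P$, for which $[\iota\biota]=[\id]\oplus[\rho']$ with $d(\rho')=8$ and $[\kappa\bkappa]$ is the regular representation of $Q_8$. Setting $\epsilon_1=\epsilon\iota\kappa$, the identity $\dim(\delta\epsilon_1,\delta\epsilon_1)=\dim(\bdelta\delta,\epsilon_1\overline{\epsilon_1})=1$ gives irreducibility of $R$ in $L$; and since $\epsilon_1\overline{\epsilon_1}$ realises the regular representation of $M_{10}$, the relations $[(\epsilon_1\overline{\epsilon_1})^2]=|M_{10}|[\epsilon_1\overline{\epsilon_1}]$ and $[\lambda][\epsilon_1\overline{\epsilon_1}]=[\epsilon_1\overline{\epsilon_1}][\lambda]$ reduce the relative-commutant dimension to $|M_{10}|\,\dim((\id\oplus\lambda)^2,\epsilon_1\overline{\epsilon_1})=720\cdot 11=7920=[L:R]$, so $L\supset R$ is of depth $2$.

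The genuine obstacle is step (6), and it is exactly the difficulty singled out before Theorem \ref{GTA6}: Lemma \ref{inv2} \emph{cannot} be applied to $H=M_9$, because $\widehat{\Z_3^2}$ carries a non-degenerate $Q_8$-invariant class in $H^2(\widehat{\Z_3^2},\T)$ (the symplectic pairing, preserved by $Q_8\subset SL_2(3)$). Lemma \ref{inv1} still gives $\dim(\varphi\bepsilon\epsilon\iota_3\overline{\iota_3}\varphi^{-1},\bepsilon\epsilon\iota_3\overline{\iota_3})=|M_9|=72$ for $\iota_3=\iota\kappa$, and I would descend to the complement $Q_8$ by the detour of Theorem \ref{GTA6}: first pin down $\varphi$ (for instance $[\varphi^2]=[\id]$), then form $\omega=\sigma\varphi\bepsilon$ with $\sigma=\iota\tau\biota$ as in the proof of Theorem \ref{LM} (so that $\omega(M)\subset P$), verify that $\omega$ is irreducible with $\dim(\rho',\omega\bar\omega)=1$ and $[\varphi\omega]=[\omega]$, and invoke Lemma \ref{inv3} to conclude $\varphi(P)=P$, hence $[\varphi\iota]=[\iota\psi]$ for some $\psi\in\Aut(P)$. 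A second Lemma \ref{inv1}-type count collapses $[\psi\kappa\bkappa\psi^{-1}]=[\kappa\bkappa]$ to the regular representation of $Q_8$; as every abelian normal subgroup of $Q_8$ is cyclic, the hypothesis of Lemma \ref{inv2} is met and yields $\varphi_1\in\Aut(R)$ with $[\varphi\iota\kappa]=[\iota\kappa\varphi_1]$. Finally Lemma \ref{finishing} produces a group $\Gamma\supset M_{10}$ with $L=R\rtimes_\gamma\Gamma$ acting doubly transitively on the eleven-point set $\Gamma/M_{10}$ and extending the $M(3^2)=M_{10}$-action on ten points; since $M_{11}$ is the unique transitive extension of $M(3^2)$ (\cite[Chapter XII, Theorem 1.3]{HB}), I conclude $\Gamma=M_{11}$.
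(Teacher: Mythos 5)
Your proposal follows the paper's proof essentially verbatim: the same six-step reduction, the same intermediate subfactor $N$ with $[\epsilon\bepsilon]=[\id]\oplus[\pi]$, the same identification of $\cG_{M\supset N}^d$ with $\cG_{M_9}^{M_{10}}$ feeding into Theorem \ref{dualM10-M9}, the same depth-2 dimension count using $720\cdot 11=7920$, and the same Theorem \ref{GTA6}-style detour in step (6) (Lemma \ref{inv3} giving $\varphi(P)=P$ for $P=R\rtimes_\beta Q_8$, then Lemma \ref{inv2} applied to $Q_8$, whose abelian normal subgroups are cyclic), ending with the uniqueness of the transitive extension of $M(3^2)$. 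The one place you understate the labor is step (1): before $\Irr(\cC_1)=\{\id,\chi,\pi,\pi\chi,\xi_1,\eta_1,\eta_2,\zeta\}$ can be exhibited, the paper must rule out, via a dedicated lemma with a lengthy case analysis, an alternative self-consistent-looking fusion pattern in which $\pi^2$ contains $\nu$ instead of $\eta_1\oplus\eta_2$ --- but the tools you name (Frobenius reciprocity and associativity) are exactly what that lemma uses, so this is a matter of detail rather than of approach.
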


\begin{proof} 
(1) Let $\delta:M\hookrightarrow L$ be the inclusion map, and let $[\bdelta\delta]=[\id]\oplus [\lambda]$ 
be the irreducible decomposition. 
We parametrize the irreducible $M$-$M$ sectors and the $L$-$M$ sectors generated by $\delta$ as in Figure \ref{M11-M10}. 
Then we have  
$$d(\chi)=1,\quad d(\pi)=9,\quad d(\lambda)=d(\xi_1)=d(\xi_2)=d(\xi_3)=d(\eta_1)=d(\eta_2)=10,$$
$$\quad d(\zeta)=16,\quad d(\nu)=20, \quad  d(\mu)=80,\quad d(\delta)=\sqrt{11}.$$
From the graph, we can see that $\lambda$, $\pi$, $\pi\chi$, $\mu$, $\nu$, and $\chi$ are self-conjugate, and 
$$\{[\chi\lambda],[\overline{\xi_1}],[\overline{\xi_2}],[\overline{\xi_3}],[\overline{\eta_1}],[\overline{\eta_2}]\}
=\{[\lambda\chi],[\xi_1],[\xi_2],[\xi_3],[\eta_1],[\eta_2]\}.$$ 
Since $\pi\chi$ is self-conjugate, we have $[\pi\chi]=[\chi\pi]$. 
By the graph symmetry, we have $[\zeta\chi]=[\zeta]$, $[\mu\chi]=[\mu]$, $[\nu\chi]=[\nu]$, and 
$$\{[\xi_1\chi],[\xi_2\chi],[\xi_3\chi]\}=\{[\xi_1],[\xi_2],[\xi_3]\},$$
$$\{[\eta_1\chi],[\eta_2\chi]\}=\{[\eta_1],[\eta_2]\}.$$

The basic fusion rules coming from the graph are 
\begin{equation}\label{M1}
[\lambda^2]=[\id]\oplus [\lambda]\oplus [\pi]\oplus [\mu],
\end{equation}
\begin{equation}\label{M2}
[\lambda\pi]=[\lambda]\oplus [\mu],
\end{equation}
\begin{equation}\label{M3}
[\lambda\mu]=[\lambda]\oplus [\lambda\chi]\oplus [\pi]\oplus[\pi\chi]\oplus [\xi_1]\oplus[\xi_2]\oplus [\xi_3]\oplus [\eta_1]\oplus[\eta_2]
\oplus 2[\zeta]\oplus 2[\nu]\oplus 8[\mu],
\end{equation}
\begin{equation}\label{M4}
[\lambda\xi_i]+[\xi_i]=[\xi_1]\oplus [\xi_2]\oplus [\xi_3]\oplus [\mu], 
\end{equation}
\begin{equation}\label{M5}
[\lambda\eta_i]=[\nu]\oplus [\mu], 
\end{equation}
\begin{equation}\label{M6}
[\lambda\zeta]=2[\mu], 
\end{equation}
\begin{equation}\label{M7}
[\lambda\nu]=[\eta_1]\oplus [\eta_2]\oplus [\nu]\oplus 2[\mu].  
\end{equation}
Since the right-hand sides of Eq.(\ref{M2}),(\ref{M3}),(\ref{M5}),(\ref{M6}) are self-conjugate, we have 
$[\lambda\pi]=[\pi\lambda]$, $[\lambda\mu]=[\mu\lambda]$, $[\lambda\eta_i]=[\overline{\eta_i}\lambda]$, and 
$[\lambda\zeta]=[\zeta\lambda]$.
Since $[\lambda^2\pi]=[\pi\lambda^2]$, we get $[\mu\pi]=[\pi\mu]$. 
  
By associativity, we get 
\begin{align}\label{M8}
[\pi^2]\oplus [\mu\pi]&=[\id]\oplus [\lambda]\oplus [\lambda\chi]\oplus [\pi]\oplus [\pi\chi] \\
 &\oplus[\xi_1]\oplus[\xi_2]\oplus [\xi_3]
 \oplus [\eta_1]\oplus[\eta_2]\oplus 2[\zeta]\oplus 2[\nu]\oplus 8[\mu], \nonumber
\end{align}
\begin{align}\label{M9}
[\pi\mu]\oplus[\mu^2]&=[\id]\oplus [\chi]\oplus 9[\lambda]\oplus 9[\lambda\chi]\oplus 8[\pi]\oplus 8[\pi\chi] \\
 &\oplus 9[\xi_1]\oplus 9[\xi_2]\oplus 9[\xi_3]
\oplus 9[\eta_1]\oplus 9[\eta_2]\oplus 14[\zeta]\oplus 18[\nu]\oplus  72[\mu], \nonumber
\end{align}
\begin{align}\label{M10}
[\pi\xi_i]\oplus[\mu\xi_i] &= [\lambda]\oplus [\lambda\chi]\oplus [\pi]\oplus[\pi\chi] \\
 &\oplus [\xi_i]\oplus [\xi_1]\oplus[\xi_2]\oplus [\xi_3]\oplus [\eta_1]\oplus[\eta_2]
\oplus 2[\zeta]\oplus 2[\nu]\oplus 9[\mu], \nonumber
\end{align}
\begin{align}\label{M11}
[\eta_i]\oplus [\pi\eta_i]\oplus [\mu\eta_i] &= [\lambda]\oplus [\lambda\chi]\oplus [\pi]\oplus[\pi\chi]\\
 &\oplus [\xi_1]\oplus[\xi_2]\oplus [\xi_3]
\oplus 2[\eta_1]\oplus 2[\eta_2]
\oplus 2[\zeta]\oplus 2[\nu]\oplus 9[\mu], \nonumber
\end{align}
\begin{align}\label{M12}
[\pi\zeta]\oplus [\mu\zeta] &=2[\lambda]\oplus 2[\lambda\chi]\oplus 2[\pi]\oplus 2[\pi\chi] \\
 &\oplus 2[\xi_1]\oplus 2[\xi_2]\oplus 2[\xi_3]\oplus 2[\eta_1]\oplus 2[\eta_2]\oplus 3[\zeta]\oplus 4[\nu]\oplus 14[\mu],  \nonumber
\end{align}
\begin{align}\label{M13}
[\pi\nu]\oplus[\mu\nu] &=2[\lambda]\oplus 2[\lambda\chi]\oplus 2[\pi]\oplus 2[\pi\chi] \\
 &\oplus 2[\xi_1]\oplus 2[\xi_2]\oplus 2[\xi_3]\oplus 2[\eta_1]\oplus 2[\eta_2]\oplus 4[\zeta]\oplus 5[\nu]\oplus 18[\mu]. \nonumber
\end{align}

We give a criterion to separate the summations of the left-hand sides.  
For irreducible $X$ and $Y$, we have 
$$\dim (\lambda \pi X,Y)=\dim(\pi X,\lambda Y),$$
and on the other hand, 
$$\dim(\lambda \pi X,Y)=\dim((\lambda\oplus \mu)X,Y)=\dim(\lambda X,Y)+\dim (\mu X,Y).$$
Thus 
\begin{equation}\label{M14}
\dim(\pi X\oplus \mu X,Y)=\dim(\pi X,Y\oplus \lambda Y)-\dim(\lambda X,Y). 
\end{equation}

The Frobenius reciprocity implies $\dim(\pi^2,\lambda)=0$ and $\dim(\mu\pi,\lambda)=1$. 
We claim that $\pi^2$ does not contain $\mu$. 
Assume on the contrary that $\pi^2$ contains $\mu$. 
Then Eq.(\ref{M8}) implies $\dim(\mu\pi,\mu)=7$. 
Since $[\lambda]$ commutes with $[\zeta]$, Eq.(\ref{M6}) shows that $2[\mu]=[\zeta\lambda]$, and 
$$14=\dim(2\mu\pi,\mu)=\dim(\zeta\lambda\pi,\mu)=\dim(\zeta(\lambda\oplus \mu),\mu)=\dim(2\mu\oplus \zeta\mu,\mu).$$
Since $\mu$ and $\zeta$ are self-conjugate, we get $\dim(\mu\zeta,\mu)=12$. 
However, this and Eq.(\ref{M12}) show $\dim(\pi\zeta,\mu)=2$, which is impossible because $d(\pi\zeta)<2d(\mu)$. 
Therefore the claim is shown. 
The Frobenius reciprocity implies that we have $\dim(\mu\pi,\pi)=0$. 
Since $[\mu\pi\chi]=[\mu\chi\pi]=[\mu\pi]$, we get $\dim(\mu\pi,\pi\chi)=0$ too. 
Thus dimension counting shows that we may put 
$$[\pi^2]=[\id]\oplus [\pi]\oplus [\pi\chi]\oplus 2[\zeta]\oplus \bigoplus_{i=1}^3a_i[\xi_i]\oplus \bigoplus_{i=1}^2b_i[\eta_i]\oplus c[\nu],$$
where $a_i$, $b_i$, and $c$ are non-negative integers satisfying 
$$\sum_{i=1}^3a_i+\sum_{i=1}^2b_i+2c=3.$$ 
Applying Eq.(\ref{M14}) to this, we obtain $a_1+a_2+a_3=1$ and $b_i=1-c$. 
We may and do assume $a_1=1$, $a_2=a_3=0$, and 
$$[\pi^2]=[\id]\oplus [\pi]\oplus [\pi\chi]\oplus 2[\zeta]\oplus [\xi_1]\oplus (1-c)[\eta_1]\oplus (1-c)[\eta_2]\oplus c[\nu],$$
$$[\mu\pi]=8[\mu]\oplus [\lambda]\oplus [\lambda\chi]\oplus [\xi_2]\oplus [\xi_3]\oplus c[\eta_1]\oplus c[\eta_2]\oplus (2-c)[\nu].$$

Since $[\mu\pi]=[\pi\mu]$, Eq.(\ref{M9}) shows $\dim(\mu^2,\xi_i)=8$ for $i=2,3$, and the Frobenius reciprocity and 
Eq.(\ref{M10}) show that $\pi\xi_i$ contains $\mu$. 
Thus 
$$[\pi\xi_i]=[\mu]\oplus 10 \textrm{ dim},\quad i=2,3.$$ 
If $\xi_i$ were not contained in $\pi\xi_i$, Eq.(\ref{M10}) implies that $\mu\xi_i$ would contain $\xi_i$ with multiplicity 2, 
and consequently $\xi_i\overline{\xi_i}$ would contain $\mu$ with multiplicity 2, which is contradiction because 
$d(\xi_i)^2<2d(\mu).$ 
Thus we have 
\begin{equation}\label{M15}
[\pi\xi_i]=[\mu]\oplus [\xi_i],\quad i=2,3,
\end{equation}
\begin{align}\label{M16}
\lefteqn{[\mu\xi_i]= [\lambda]\oplus [\lambda\chi]\oplus [\pi]\oplus[\pi\chi] } \\
 &\oplus [\xi_1]\oplus[\xi_2]\oplus [\xi_3]\oplus [\eta_1]\oplus[\eta_2]
\oplus 2[\zeta]\oplus 2[\nu]\oplus 8[\mu],\quad i=2,3.\nonumber
\end{align}

Since our argument is already long, we state the next claim as a separate lemma. 

\end{proof}

\begin{lemma} With the above notation, we have $c=0$. 
\end{lemma}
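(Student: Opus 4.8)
The plan is to determine the integer $c\in\{0,1\}$ by analyzing the fusion of the twenty-dimensional self-conjugate object $\nu$, which is the only piece of the fusion data still undetermined by the parameter $c$. Recall that we have reduced the ambiguity to the two displayed fusion rules
\begin{equation*}
[\pi^2]=[\id]\oplus[\pi]\oplus[\pi\chi]\oplus 2[\zeta]\oplus[\xi_1]\oplus(1-c)[\eta_1]\oplus(1-c)[\eta_2]\oplus c[\nu],
\end{equation*}
\begin{equation*}
[\mu\pi]=8[\mu]\oplus[\lambda]\oplus[\lambda\chi]\oplus[\xi_2]\oplus[\xi_3]\oplus c[\eta_1]\oplus c[\eta_2]\oplus(2-c)[\nu].
\end{equation*}
First I would extract, via the Frobenius reciprocity, the multiplicities of $\pi$ and $\mu$ in the various products involving $\nu$. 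For instance the relation $\dim(\pi\nu,\pi)=\dim(\nu,\pi^2)=c$ and $\dim(\mu\nu,\pi)=\dim(\nu,\mu\pi)=2-c$ feed directly into Eq.(\ref{M13}), which constrains $[\pi\nu]\oplus[\mu\nu]$. The aim is to assume $c=1$ and derive a numerical contradiction.

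The key step will be to run the separation criterion Eq.(\ref{M14}) with $X=\nu$, that is $\dim(\pi\nu\oplus\mu\nu,Y)=\dim(\pi\nu,Y\oplus\lambda Y)-\dim(\lambda\nu,Y)$, testing it against the candidate summands $Y$ appearing in Eq.(\ref{M13}), using Eq.(\ref{M7}) for $[\lambda\nu]$. In the case $c=1$ the relation $[\pi^2]$ contains $[\nu]$ with multiplicity one and contains neither $[\eta_1]$ nor $[\eta_2]$; the reciprocity then forces $\dim(\pi\nu,\eta_i)=\dim(\pi\eta_i,\nu)$ and $\dim(\pi\eta_i,\pi)=\dim(\eta_i,\pi^2)=0$, so combined with Eq.(\ref{M11}) I can pin down $[\pi\eta_i]$ and $[\mu\eta_i]$ completely. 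I would then compute $\dim(\nu^2,\mu)$ and $\dim(\nu^2,\zeta)$ in two independent ways—once from $[\mu\nu]$ by reciprocity and once from associativity $[\lambda\nu\cdot\nu]=[\lambda\cdot\nu^2]$ using Eq.(\ref{M7})—and check consistency. My expectation is that with $c=1$ one of these dimension counts produces a multiplicity of $\mu$ (dimension $80$) inside a product whose total dimension is too small to accommodate it, exactly as in the earlier argument that ruled out $\mu\subset\pi^2$ (the trick there was $2[\mu]=[\zeta\lambda]$ together with $d(\pi\zeta)<2d(\mu)$).

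Concretely, the cleanest route is probably to exploit $\zeta$: since $[\lambda\zeta]=2[\mu]$, the identity $\dim(2\mu\cdot X,Y)=\dim(\zeta\lambda X,Y)$ turns any multiplicity involving two copies of $\mu$ into a multiplicity involving $\zeta$ and $\lambda$, where the available dimension is far smaller. I would apply this with $X=\eta_i$ or $X=\nu$: in the $c=1$ scenario the rule $[\mu\eta_i]$ would be forced to contain $[\nu]$ or $[\eta_i]$ with a multiplicity making $\dim(\mu\zeta,\mu)$ exceed what $\dim(\pi\zeta,\mu)\geq 0$ and $d(\pi\zeta)<2d(\mu)$ permit, reproducing exactly the contradiction mechanism already used in the proof of the theorem. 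This yields $c=0$.

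The main obstacle I anticipate is bookkeeping rather than conceptual: there are many intertwined self-conjugacy and $\chi$-twist relations ($[\pi\chi]=[\chi\pi]$, $[\nu\chi]=[\nu]$, $[\mu\chi]=[\mu]$, and the permutation action of $\chi$ on $\{\xi_i\}$ and $\{\eta_j\}$), and one must keep the two involutions—contragredient and multiplication by $\chi$—straight while chasing multiplicities through Eqs.(\ref{M7})–(\ref{M16}). The genuinely delicate point is ensuring that the contradiction in the $c=1$ case does not secretly rely on an unjustified choice of labeling among $\xi_2,\xi_3$ or $\eta_1,\eta_2$; I would therefore phrase the final dimension inequality in a labeling-symmetric form (summing over the relevant orbit) so that it is manifestly independent of these conventions before concluding $c=0$.
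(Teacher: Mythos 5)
There is a genuine gap, and it lies precisely where you place your confidence: in the expectation that the $c=1$ scenario dies by a dimension-overflow involving $\mu$, ``exactly as in the earlier argument that ruled out $\mu\subset\pi^2$.'' It does not. If one assumes $c=1$ and runs the tools you list --- Frobenius reciprocity, the separation criterion Eq.~(\ref{M14}), the identity $2[\mu]=[\zeta\lambda]$, and associativity --- one obtains a \emph{completely consistent} system of fusion rules for all products of $\pi$, $\mu$, $\zeta$, $\nu$, $\xi_1$ with the other simple objects: this is exactly what the paper's proof does, deriving eighteen ``wrong equations'' (W1)--(W18) that pass every dimension count. In particular your two proposed consistency checks both succeed under $c=1$: from (W9) one gets $\dim(\nu^2,\mu)=\dim(\nu,\mu\nu)=3$, matching the $3[\mu]$ in (W18), and from (W14) one gets $\dim(\nu^2,\zeta)=\dim(\nu,\zeta\nu)=2$, matching the $2[\zeta]$ there; the total dimension of (W18) is $400=d(\nu)^2$ on the nose. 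So no multiplicity of $\mu$ ever exceeds what the ambient dimension permits, and the $d(\pi\zeta)<2d(\mu)$ trick, which the paper uses only to exclude $\mu\subset\pi^2$ \emph{before} the dichotomy $c\in\{0,1\}$ is set up, has no analogue that distinguishes $c=1$ from $c=0$.

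The idea actually needed --- entirely absent from your plan --- is to push the computation into the products among the four $10$-dimensional objects $\xi_2,\xi_3,\eta_1,\eta_2$ and confront it with the contragredient involution. Under $c=1$ one is forced to $[\chi\eta_2]=[\eta_1]$ and $[\chi\xi_2]=[\xi_3]$, and the claim $\{[\overline{\xi_2}],[\overline{\xi_3}],[\overline{\eta_1}],[\overline{\eta_2}]\}=\{[\xi_2],[\xi_3],[\eta_1],[\eta_2]\}$ leaves two cases: $[\overline{\xi_2}]\in\{[\eta_1],[\eta_2]\}$ or $[\overline{\xi_2}]\in\{[\xi_2],[\xi_3]\}$. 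In each case one pins down $[\xi_1\eta_i]$, $[\nu\eta_i]$, $[\xi_1\xi_i]$, $[\nu\xi_i]$, left-multiplies by $[\lambda]$ to extract sums such as $[\eta_1^2]\oplus[\eta_2\eta_1]$ and $[\xi_2^2]\oplus[\xi_3\xi_2]$, and then observes that taking conjugates of these expressions is incompatible with the rule $[\overline{XY}]=[\bar{Y}\bar{X}]$. That conjugation-symmetry obstruction, not a dimension count, is what kills $c=1$; your closing remark about phrasing things in a labeling-symmetric way is a reasonable instinct, but it addresses a cosmetic issue, whereas the substantive work is the two-case analysis of the contragredient map that your outline never reaches.
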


\begin{proof}
Assume on the contrary that $c=1$.  
Since  
$$2[\pi\mu]=[\pi\lambda\zeta]=[(\mu\oplus \lambda)\zeta]=[\mu\zeta]\oplus 2[\mu],$$
we can obtain the irreducible decomposition of $\mu\zeta$ and $\pi\zeta$. 

Now Eq.(\ref{M14}), the Frobenius reciprocity, and dimension counting show the following:

\begin{equation}
[\pi^2]=[\id]\oplus [\pi]\oplus[\pi\chi]\oplus 2[\zeta]\oplus [\xi_1]\oplus [\nu],\tag{W1}
\end{equation}

\begin{equation}[\mu\pi]= [\lambda]\oplus [\lambda\chi] 
\oplus[\xi_2]\oplus [\xi_3]\oplus [\eta_1]\oplus [\eta_2]\oplus [\nu]\oplus 8[\mu], \tag{W2}
\end{equation}

\begin{align}
[\mu^2]&=[\id]\oplus [\chi]\oplus 8[\lambda]\oplus 8[\lambda\chi]\oplus 8[\pi]\oplus 8[\pi\chi] \tag{W3}\\
 &\oplus 9[\xi_1]\oplus 8[\xi_2]\oplus 8[\xi_3]
\oplus 8[\eta_1]\oplus 8[\eta_2]\oplus 14[\zeta]\oplus 17[\nu]\oplus  64[\mu], \nonumber
\end{align}

\begin{equation}
[\pi\zeta]=2[\pi]\oplus 2[\pi\chi]\oplus 2[\xi_1]\oplus 3[\zeta]\oplus 2[\nu],\tag{W4}
\end{equation}

\begin{equation}\label{W5}
[\mu\zeta]= 2[\lambda]\oplus 2[\lambda\chi]\oplus 2[\xi_2]\oplus 2[\xi_3]\oplus 2[\eta_1]\oplus 2[\eta_2]\oplus 2[\nu]\oplus 14[\mu], 
\tag{W5}
\end{equation}

\begin{equation}
[\pi\xi_1]=[\pi]\oplus [\pi\chi]\oplus 2[\zeta]\oplus 2[\xi_1]\oplus [\nu],\tag{W6}
\end{equation}

\begin{equation}
[\mu\xi_1] = [\lambda]\oplus [\lambda\chi]\oplus[\xi_2]\oplus [\xi_3]\oplus [\eta_1]\oplus[\eta_2]
\oplus [\nu]\oplus 9[\mu], \tag{W7}
\end{equation}

\begin{equation}
[\pi\nu]=[\pi]\oplus [\pi\chi]\oplus 2[\zeta]\oplus [\xi_1]\oplus 2[\nu]\oplus [\mu],\tag{W8}
\end{equation}

\begin{align}
\lefteqn{[\mu\nu] =2[\lambda]\oplus 2[\lambda\chi]\oplus [\pi]\oplus [\pi\chi]  \tag{W9}}\\
 &\oplus [\xi_1]\oplus 2[\xi_2]\oplus 2[\xi_3]\oplus 2[\eta_1]\oplus 2[\eta_2]\oplus 2[\zeta]\oplus 3[\nu]\oplus 17[\mu] \nonumber
\end{align}
Here the letter `W' stands for wrong equations. 
Since the right-hand sides are self-conjugate, we see that $[\mu]$ commutes with $[\pi]$, $[\xi_1]$, $[\zeta]$, and $[\nu]$. 

An argument similar to the case of $\pi\xi_i$ with $i=2,3$, shows $[\pi\eta_1]=[\mu]\oplus [\eta_2]$, 
and $[\pi\eta_2]=[\mu]\oplus [\eta_1]$. 
Eq.(\ref{M10}) shows 
$$2=\dim(\mu\eta_i,\zeta)=\dim(\eta_i\zeta,\mu),$$
and consequently $[\eta_i\zeta]=2[\mu]$. 
In the same way, we have $[\xi_2\zeta]=[\xi_3\zeta]=2[\mu]$, and taking conjugate, we also get 
\begin{equation}
[\xi_2\zeta]=[\xi_3\zeta]=[\eta_1\zeta]=[\eta_2\zeta]=[\zeta\xi_2]=[\zeta\xi_3]=[\zeta\eta_1]=[\zeta\eta_2]=2[\mu].\tag{W10}
\end{equation}

\begin{equation}
[\pi\eta_1]=[\mu]\oplus [\eta_2],\quad [\pi\eta_1]=[\mu]\oplus [\eta_1],\tag{W11}
\end{equation}

\begin{align}
\lefteqn{[\mu\xi_2]=[\mu\xi_3]=[\mu\eta_1]=[\mu\eta_2] \tag{W12}} \\
 &=[\lambda]\oplus [\lambda\chi]\oplus [\pi]\oplus[\pi\chi] 
\oplus [\xi_1]\oplus[\xi_2]\oplus [\xi_3]\oplus [\eta_1]\oplus[\eta_2]
\oplus 2[\zeta]\oplus 2[\nu]\oplus 8[\mu].\nonumber
\end{align}

From,
$$2[\mu\xi_2]=[\zeta\lambda\xi_1]=[\zeta(\mu\oplus \xi_2\oplus \xi_3)]=[\zeta\mu]\oplus [\zeta(\xi_1\oplus \xi_3)]
=[\zeta\mu]\oplus 2[\mu]\oplus [\zeta\xi_1],$$
we get the irreducible decomposition of $\zeta\xi_1$. 

From
$$2[\mu\eta_i]=[\zeta\lambda\eta_i]=[\zeta(\mu\oplus \nu)]=[\zeta\mu]\oplus [\zeta\nu],$$
we get the irreducible decomposition of $[\zeta\nu]$. 
The Frobenius reciprocity and dimension counting shows 
\begin{equation}
[\zeta\xi_1]=2[\pi]\oplus 2[\pi\chi]\oplus 4[\zeta]\oplus 2[\xi_1]\oplus 2[\nu],\tag{W13}
\end{equation}

\begin{equation}
[\zeta\nu]=2[\pi]\oplus 2[\pi\chi]\oplus 4[\zeta]\oplus 2[\xi_1] \oplus 2[\nu]\oplus 2[\mu],\tag{W14}
\end{equation}

\begin{equation}
[\zeta^2]=[\id]\oplus [\chi]\oplus 3[\pi]\oplus 3[\pi\chi]\oplus 5[\zeta]\oplus 4[\xi_1]\oplus 4[\nu].\tag{W15}
\end{equation}

Next we determine the left multiplications of $[\xi_1]$ and $[\nu]$ by applying associativity to $[\pi^2X]$. 
The two equations 
$$[\pi(\pi\xi_1)]=[\pi(\pi\oplus \pi\chi\oplus 2\zeta\oplus 2\xi_1\oplus \nu)],$$
$$[\pi^2\xi_1]=[(\id\oplus \pi\oplus\pi\chi\oplus 2\zeta\oplus \xi_1\oplus \nu)\xi_1],$$
show
$$[\xi_1^2]\oplus [\nu\xi_1]=[\id]\oplus[\chi]\oplus 3[\pi]\oplus 3[\pi\chi]\oplus 4[\zeta]\oplus 2[\xi_1]\oplus 4[\nu]\oplus [\mu].$$
By the Frobenius reciprocity and dimension computing, we get 
\begin{equation}
[\xi_1^2]=[\id]\oplus [\chi]\oplus 2 [\pi]\oplus 2[\pi\chi]\oplus 2[\zeta]\oplus [\xi_1]\oplus [\nu],\tag{W16}
\end{equation}
\begin{equation}
[\nu\xi_1]=[\pi]\oplus [\pi\chi]\oplus 2[\zeta]\oplus [\mu]\oplus [\xi_1]\oplus 3[\nu].\tag{W17} 
\end{equation}

The two equations 
$$[\pi(\pi\nu)]=[\pi(\pi\oplus \pi\chi\oplus 2\zeta\oplus \xi_1\oplus 2\nu\oplus \mu)],$$
$$[\pi^2\nu]=[(\id\oplus \pi\oplus\pi\chi\oplus 2\zeta\oplus \xi_1\oplus \nu)\nu],$$ 
show
\begin{align*}
\lefteqn{[\xi_1\nu]\oplus[\nu^2]=[\id]\oplus [\chi]\oplus [\lambda]\oplus [\lambda\chi]\oplus 3[\pi]\oplus 3[\pi\chi]} \\
 &\oplus 4[\zeta]\oplus 4[\xi_1]\oplus 3[\nu]\oplus 4[\mu]\oplus [\xi_2]\oplus[\xi_3]\oplus[\eta_1]\oplus [\eta_2], 
\end{align*}
and 
\begin{align}
\lefteqn{[\nu^2]=[\id]\oplus [\chi]\oplus [\lambda]\oplus [\lambda\chi]\oplus 2[\pi]\oplus 2[\pi\chi]\tag{W18}} \\
 &\oplus 2[\zeta]\oplus 3[\mu]\oplus 3[\xi_1]\oplus [\xi_2]\oplus [\xi_3]\oplus [\eta_1]\oplus [\eta_2]. \nonumber
\end{align}

The two equations  
$$[\pi(\pi\eta_1)]=[\pi\mu]\oplus [\pi\eta_2]=[\pi\mu]\oplus[\mu]\oplus[\eta_1],$$
$$[\pi^2\eta_1]=[(\id\oplus \pi\oplus\pi\chi\oplus 2\zeta\oplus \xi_1\oplus \nu)\eta_1]
=[\eta_1]\oplus [\eta_2]\oplus [\chi\eta_2]\oplus 6[\mu]\oplus [\xi_1\eta_1]\oplus [\nu\eta_1].$$
show that $[\chi\eta_2]=[\eta_1]$, and 
$$[\xi_1\eta_1]\oplus [\nu\eta_1]=[\lambda]\oplus[\lambda\chi]\oplus[\xi_2]\oplus[\xi_3]\oplus [\nu]\oplus 3[\mu]. $$
In a similar way, we have $[\chi\xi_2]=[\xi_3]$ and 
$$[\xi_1\xi_2]\oplus [\nu\xi_2]=[\lambda]\oplus [\lambda\chi]\oplus [\eta_1]\oplus [\eta_2]\oplus [\nu]\oplus 3[\mu].$$

We claim
$$\{[\overline{\xi_2},[\overline{\xi_3}],[\overline{\eta_1}],[\overline{\eta_2}]\}=\{[\xi_2],[\xi_3],[\eta_1],[\eta_2]\}.$$
Indeed, since $[\lambda\chi\xi_2]=[\lambda\xi_3]$ does not contain $\id$, we see that $\lambda\chi$ is not 
the conjugate sector of $\xi_2$. 
A similar argument applied to $\xi_3$, $\eta_1$, and $\eta_2$ shows the claim. 

Assume first that $[\overline{\xi_2}]$ is either $[\eta_1]$ or $[\eta_2]$. 
Note that in this case $[\overline{\xi_3}]=[\overline{\xi_2}\chi]$ is also either $\eta_1$ or $\eta_2$. 
Then 
$$\dim(\xi_1\eta_2,\lambda)=\dim(\lambda \xi_1,\overline{\eta_2})=1,$$  
and $\dim(\xi_1\eta_2,\lambda\chi)=1$ in the same way. 
We have 
$$\dim(\nu\xi_2,\lambda)=\dim(\lambda\nu,\overline{\xi_2})=1,$$
and $\dim(\nu\xi_2,\lambda\chi)=1$ in the same way. 
Thus \begin{equation}\label{W19}
[\xi_1\eta_1]=[\xi_1\eta_2]=[\lambda]\oplus [\lambda\chi]\oplus [\mu],\tag{W19}
\end{equation}
\begin{equation}\label{W20}
[\nu\eta_1]=[\nu\eta_2]=[\xi_2]\oplus [\xi_3]\oplus [\nu]\oplus 2[\mu],\tag{W20}
\end{equation}
\begin{equation}\label{W21}
[\xi_1\xi_2]=[\xi_1\xi_3]=[\eta_1]\oplus [\eta_2]\oplus [\mu],\tag{W21}
\end{equation}
\begin{equation}\label{W22}
[\nu\xi_2]=[\nu\xi_3]=[\lambda]\oplus [\lambda\chi]\oplus[\nu]\oplus 2[\mu].\tag{W22}
\end{equation}
Multiplying the both sides of Eq.(\ref{W20}) and (\ref{W21}) by $[\lambda]$ from left, we get
$$[\eta_1^2]\oplus[\eta_2\eta_1]=[\eta_1\eta_2]\oplus [\eta_2^2]=2[\xi_1]\oplus [\eta_1]\oplus[\eta_2]\oplus 2[\mu],$$ 
$$[\xi_2^2]\oplus[\xi_3\xi_2]=[\xi_2\xi_3]\oplus [\xi_3^2]=2[\mu]\oplus 2[\nu].$$
Taking conjugate, we get contradiction. 

Assume now that $[\overline{\xi_2}]$ is either $[\xi_2]$ or $[\xi_3]$. 
In this case $[\overline{\xi_3}]$ is either $[\xi_2]$ or $[\xi_3]$ too. 
The Frobenius reciprocity and dimension counting show 
\begin{equation}\label{W23}
[\xi_1\eta_1]=[\xi_1\eta_2]=[\mu]\oplus [\xi_2]\oplus [\xi_3],\tag{W23}
\end{equation}
\begin{equation}\label{W24}
[\nu\eta_1]=[\nu\eta_2]=[\lambda]\oplus [\lambda\chi]\oplus [\nu]\oplus 2[\mu],\tag{W24} 
\end{equation}
\begin{equation}\label{W25}
[\xi_1\xi_2]=[\xi_1\xi_3]=[\lambda]\oplus [\lambda\chi]\oplus [\mu],\tag{W25}
\end{equation}
\begin{equation}\label{W26}
[\nu\xi_2]=[\nu\xi_3]=[\eta_1]\oplus [\eta_2]\oplus [\nu]\oplus 2[\mu].\tag{W26} 
\end{equation}
Multiplying the both sides of Eq.(\ref{W23}) and (\ref{W26}) by $[\lambda]$ from left, 
we get 
$$[\xi_2\eta_1]\oplus [\xi_3\eta_1]=[\xi_2\eta_2]\oplus [\xi_3\eta_2]
=2[\xi_1]\oplus [\xi_2]\oplus [\xi_3]\oplus 2[\mu],$$ 
$$[\eta_1\xi_2]\oplus [\eta_2\xi_2]=[\eta_1\xi_3]\oplus [\eta_2\xi_3]=2[\nu]\oplus 2[\mu],$$
which is contradiction again. 
Finally we conclude that $c=0$. 
\end{proof}

\begin{proof}[Continuation of the proof of Theorem \ref{GthM}]
The above lemma and Eq.(\ref{M8}) show 
\begin{equation}\label{M17}
[\pi^2]=[\id]\oplus [\pi]\oplus [\pi\chi]\oplus 2[\zeta]\oplus [\xi_1]\oplus [\eta_1]\oplus [\eta_2],
\end{equation}
\begin{equation}\label{M18}
[\mu\pi]=8[\mu]\oplus [\lambda]\oplus [\lambda\chi]\oplus [\xi_2]\oplus [\xi_3]\oplus 2[\nu].
\end{equation}
From Eq.(\ref{M17}), we can see 
\begin{equation}\label{M19}
\{[\overline{\xi_1}],[\overline{\eta_1}],[\overline{\eta_2}]\}=\{[\xi_1],[\eta_1],[\eta_2]\},
\end{equation}
and in consequence 
\begin{equation}\label{M20}
\{[\chi\lambda],[\overline{\xi_2}],[\overline{\xi_3}]\}=\{[\lambda\chi],[\xi_2],[\xi_3]\}.
\end{equation}

Since  
$$2[\pi\mu]=[\pi\lambda\zeta]=[(\mu\oplus \lambda)\zeta]=[\mu\zeta]\oplus 2[\mu],$$
we get 
\begin{equation}\label{M21}
[\mu\zeta] =2[\lambda]\oplus 2[\lambda\chi]\oplus 2[\xi_2]\oplus 2[\xi_3]\oplus 4[\nu]\oplus 14[\mu], 
\end{equation}
and from Eq.(\ref{M12}),  
\begin{equation}\label{M22}
[\pi\zeta]=2[\pi]\oplus 2[\pi\chi]\oplus 2[\xi_1]\oplus 2[\eta_1]\oplus 2[\eta_2]\oplus 3[\zeta]. 
\end{equation} 

Eq.(\ref{M18}) shows that $\pi\nu$ contains $\mu$ with multiplicity 2. 
If $\pi\nu$ contained $\nu$ with multiplicity at most 1, Eq.(\ref{M13}) shows that $\nu^2$ would contain 
$\mu$ with multiplicity 4, which is impossible because $d(\nu^2)=4d(\mu)$ and $\nu^2$ contains $\id$. 
Thus we get 
\begin{equation}\label{M23}
[\pi\nu]=2[\mu]\oplus 2[\nu]. 
\end{equation}

Now the Frobenius reciprocity implies that neither $\pi\xi_1$, $\pi\eta_1$, nor $\pi\eta_2$ contains 
$\lambda$, $\lambda\chi$, $\xi_2$, $\xi_3$, $\nu$, and we get 
\begin{equation}\label{M25}
[\pi\xi_1]=[\pi]\oplus[\pi\chi]\oplus 2[\xi_1]\oplus [\eta_1]\oplus[\eta_2]
\oplus 2[\zeta],
\end{equation}
\begin{equation}\label{M26}
[\pi\eta_1]= [\pi]\oplus[\pi\chi]\oplus [\xi_1]\oplus [\eta_1]\oplus 2[\eta_2]
\oplus 2[\zeta],
\end{equation}
\begin{equation}\label{M27}
[\pi\eta_2]= [\pi]\oplus[\pi\chi]\oplus [\xi_1]\oplus 2[\eta_1]\oplus [\eta_2]
\oplus 2[\zeta]. 
\end{equation}

The above fusion rules show that the fusion category $\cC_1$ generated by $\pi$ satisfies 
$$\Irr(\cC_1)=\{\id,\chi,\pi,\pi\chi,\xi_1,\eta_1,\eta_2,\zeta\}.$$ 

(2) Theorem \ref{intermediate} and Eq.(\ref{M1}) imply that there exists a unique intermediate subfactor $N$ between 
$M$ and $\lambda(M)$ such that if $\epsilon:N\hookrightarrow M$ is the inclusion map, we have 
$$[\epsilon\bepsilon]=[\id]\oplus [\pi].$$ 
Note that we have $d(\epsilon)=\sqrt{10}$. 
In the same way as in the proof of Lemma \ref{factorization}, there exists $\varphi\in \Aut(N)$ satisfying 
$[\lambda]=[\epsilon\varphi\bepsilon]$. 

(3) We show the dual principal graph $\cG_{M\supset N}^d$ is $\cG_{M_9}^{M_{10}}$ computed in Section 4. 
Since 
$$\dim(\pi\epsilon,\pi\epsilon)=\dim(\pi,\pi\epsilon\bepsilon)=\dim(\pi,\pi(1\oplus \pi))=2,$$
there exists an irreducible $\epsilon_0$ satisfying $[\pi\epsilon]=[\epsilon]\oplus [\epsilon_0]$ and $d(\epsilon_0)=8\sqrt{10}$. 
Since Eq.(\ref{M19}) and 
$$[\pi\epsilon\bepsilon]=[\pi]\oplus [\pi^2]=[\id]\oplus 2[\pi]\oplus [\chi\pi]\oplus [\xi_1]\oplus [\eta_1]\oplus [\eta_2]\oplus 
2[\zeta],$$
we get 
$$[\epsilon_0\bepsilon]=[\pi]\oplus [\chi\pi]\oplus [\overline{\xi_1}]\oplus [\overline{\eta_1}]\oplus [\overline{\eta_2}]\oplus 
2[\zeta].$$ 
By the Frobenius reciprocity, 
$$[\zeta\epsilon]=2[\epsilon_0].$$
Since 
$$\dim(\overline{\xi_1}\epsilon,\overline{\xi_1}\epsilon)=\dim(\overline{\xi_1},\overline{\xi_1}(\id\oplus \pi))=3,$$
there exist two irreducibles $\epsilon_2$ and $\epsilon_3$ satisfying 
$$[\overline{\xi_1}\epsilon]=[\epsilon_0]\oplus [\epsilon_2]\oplus [\epsilon_3],$$
and $d(\epsilon_2)+d(\epsilon_3)=2\sqrt{10}$. 
By the Frobenius reciprocity, we get $d(\epsilon_2)=d(\epsilon_3)=\sqrt{10}$ and 
$$[\epsilon_2\bepsilon]=[\epsilon_2\bepsilon]=[\overline{\xi_1}].$$
In a similar way, we can show 
$$\dim(\overline{\eta_1}\epsilon,\overline{\eta_1}\epsilon)=\dim(\overline{\eta_2}\epsilon,\overline{\eta_2}\epsilon)
=\dim(\overline{\eta_1}\epsilon,\overline{\eta_2}\epsilon)=2,$$
and there exists irreducible $\epsilon_4$ satisfying 
$$[\eta_1\epsilon]=[\eta_2\epsilon]=[\epsilon_4],$$
and $d(\epsilon_4)=2\sqrt{10}$. 
The Frobenius reciprocity shows 
$$[\epsilon_4\bepsilon]=[\overline{\eta_1}]\oplus [\overline{\eta_2}].$$
Note that $\xi_1$ is self-conjugate and $\{[\overline{\eta_1}],[\overline{\eta_2}]\}=\{[\eta_1],[\eta_2]\}$.  
Thus we get $\cG_{M\supset N}^d=\cG_{M_9}^{M_{10}}$.  
\begin{figure}[H]
\centering
\begin{tikzpicture}
\draw (-3,1)--(-2,0)--(-1,1)--(0,0)--(1,1)--(2,0)--(3,1);
\draw (0,0)--(0,1);
\draw (0,0)--(4,1)--(3.5,0);
\draw (4,1)--(4.5,0);
\draw (0,0)--(5,1)--(5.5,0);
\draw(0,0)--(6,1)--(5.5,0);
\draw(-0.1,0.5)node{2};
\draw(-3,1)node{$*$};
\draw(-2,0)node{$\bullet$};
\draw(-1,1)node{$\bullet$};
\draw(0,0)node{$\bullet$};
\draw(0,1)node{$\bullet$};
\draw(1,1)node{$\bullet$};
\draw(2,0)node{$\bullet$};
\draw(3,1)node{$\bullet$};
\draw(4,1)node{$\bullet$};
\draw(3.5,0)node{$\bullet$};
\draw(4.5,0)node{$\bullet$};
\draw(5,1)node{$\bullet$};
\draw(6,1)node{$\bullet$};
\draw(5.5,0)node{$\bullet$};
\draw(-3,1)node[above]{$\id_M$};
\draw(-1,1)node[above]{$\pi$};
\draw(0,1)node[above]{$\zeta$};
\draw(1,1)node[above]{$\chi\pi$};
\draw(3,1)node[above]{$\chi$};
\draw(4,1)node[above]{$\xi_1$};
\draw(5,1)node[above]{$\eta_1$};
\draw(6,1)node[above]{$\eta_2$};
\draw(-2,0)node[below]{$\epsilon$};
\draw(0,0)node[below]{$\epsilon_0$};
\draw(2,0)node[below]{$\chi\epsilon$};
\draw(3.5,0)node[below]{$\epsilon_2$};
\draw(4.5,0)node[below]{$\epsilon_3$};
\draw(5.5,0)node[below]{$\epsilon_4$};
\end{tikzpicture}
\caption{$\cG_{M\supset N}^d$}
\end{figure}

Now Theorem \ref{dualM10-M9} implies that $\cG_{M\supset N}=\cG_{M_{10}>M_9}$. 

The rest of the proof is very much similar to that of Theorem \ref{GTA6}, 
and we make only points different from it.  

(4) Theorem \ref{LM} shows that there exists a unique subfactor $R\subset N$, up to inner conjugacy, such that 
$R'\cap M=\C$ and there exists an outer action $\beta$ of $M_{10}$ on $R$ satisfying 
$$M=R\rtimes_\beta M_{10}\supset N=R\rtimes_\beta M_9.$$ 
The inclusion $L\supset R$ is irreducible. 

(5) To prove that $L\supset R$ is of depth 2, it suffices to show that $[\lambda]$ commutes with 
$$[\id]\oplus [\chi]\oplus 9[\pi]\oplus 9[\pi\chi]\oplus 10[\xi_1]\oplus 10[\eta_1]\oplus 10[\eta_2]\oplus 16[\zeta],$$
which corresponds to the regular representation of $M_{11}$. 
Indeed, it follows from 
\begin{align*}
\lefteqn{[\lambda]([\id]\oplus [\chi]\oplus 9[\pi]\oplus 9[\pi\chi]\oplus 10[\xi_1]\oplus 10[\eta_1]\oplus 10[\eta_2]\oplus 16[\zeta])} \\
 &=[\lambda]\oplus [\lambda\chi]\oplus 9([\lambda]\oplus [\mu])\oplus 9([\lambda\chi]\oplus [\mu])\oplus 10([\mu]\oplus [\xi_2]\oplus [\xi_3]) \\
 &\oplus 10([\mu]\oplus [\nu])\oplus 10([\mu]\oplus [\nu])\oplus 32[\mu] \\
 &=10([\lambda]\oplus [\lambda\chi]\oplus [\xi_2]\oplus [\xi_3]\oplus 2[\nu]\oplus 8[\mu]),
\end{align*}
which is self-conjugate as we can take the conjugate of the both sides. 

(6) We can apply Lemma \ref{inv2} to $Q_8$ to finish the proof.  
\end{proof}
%%%%%%%%%%%%%%%%%%%%%%%%%%%%%%%%%%%%%%%%%%%%%%%%%%%%%%%%%%%%%%%%%%%%%%%%%%%%%%%%%%%%%%%%%%%%%%%%%%%%%%%%%%%%%%%%%%%%%%%%%%%%%%%%%%
%%%%%%%%%%%%%%%%%%%%%%%%%%%%%%%%%%%%%%%%%%%%%%%%%%%%%%%%%%%%%%%%%%%%%%%%%%%%%%%%%%%%%%%%%%%%%%%%%%%%%%%%%%%%%%%%%%%%%%%%%%%%%%%%%%
%%%%%%%%%%%%%%%%%%%%%%%%%%%%%%%%%%%%%%%%%%%%%%%%%%%%%%%%%%%%%%%%%%%%%%%%%%%%%%%%%%%%%%%%%%%%%%%%%%%%%%%%%%%%%%%%%%%%%%%%%%%%%%%%%%

\end{document}